\def\phi{{\varphi}}
\DeclareSymbolFont{AMSb}{U}{msb}{m}{n}
\DeclareMathSymbol{\N}{\mathbin}{AMSb}{"4E}
\DeclareMathSymbol{\Z}{\mathbin}{AMSb}{"5A}
\DeclareMathSymbol{\R}{\mathbin}{AMSb}{"52}
\DeclareMathSymbol{\Q}{\mathbin}{AMSb}{"51}
\DeclareMathSymbol{\I}{\mathbin}{AMSb}{"49}
\DeclareMathSymbol{\C}{\mathbin}{AMSb}{"43}
\DeclareMathOperator*{\argmax}{argmax}
\def\be{\begin{equation}}
\def\ee{\end{equation}}
\def\ber{\begin{eqnarray}}
\def\eer{\end{eqnarray}}
\def\beq{\begin{equation}}
\def\eeq{\end{equation}}
 \newtheorem{proposition}{Proposition}[section]
\newtheorem{theorem}{Theorem}[section]
\newtheorem{lemma}[theorem]{Lemma}
\newtheorem{remark}[theorem]{Remark}
\newtheorem{example}[theorem]{Example}
\newtheorem{definition}[theorem]{Definition}
\newtheorem{corollary}[theorem]{Corollary}
\title{Uniqueness of optimal plans for  multi-marginal    mass transport  problems via a reduction argument
\footnote{A.M. is pleased to acknowledge the support of the  National Sciences and Engineering Research Council of Canada.}}
\author{  Mohammad Ali Ahmadpoor \thanks{School of Mathematics and Statistics,  Carleton University, Ottawa, ON,  Canada, mohammadaliahmadpoo@cmail.carleton.ca} \quad  and \quad Abbas Moameni\thanks{School of Mathematics and Statistics,  Carleton University, Ottawa, ON,  Canada, momeni@math.carleton.ca}
}
\date{}
\begin{document}

\maketitle
\begin{abstract}
In this paper,  we investigate the  uniqueness of  an optimal mass transport problem with $N$-marginals for  $N\geq 3$  by transforming it into a lower marginal optimal transportation problem.  Namely,   for  a family of probability spaces $\{(X_k,\mathcal{B}_{X_k},\mu_k)\}_{k=1}^N$ and a cost function $c: X_1\times\cdots\times X_N\to \mathbb{R}$    we consider the Monge-Kantorovich problem
\begin{align}\label{MONKANT}
\inf_{\lambda\in\Pi(\mu_1,\ldots,\mu_N)}\int_{\prod_{k=1}^N X_k}c\,d\lambda.
\end{align}
Then for each ordered subset  $\mathcal{P}=\{i_1,\ldots,i_p\}\subsetneq\{1,...,N\}$ with $p\geq 2$ we create a new cost function $c_\mathcal{P}$ corresponding to the original cost function $c$ defined on $\prod_{k=1}^p X_{i_k}$.  This new cost function $c_\mathcal{P}$  enjoys many of the features of the original cost  $c$ while  it has the property that any optimal plan $\lambda$ of \eqref{MONKANT} restricted to $\prod_{k=1}^p X_{i_k}$ is also an  optimal plan  to the problem
\begin{align}\label{REDMONKANT}
\inf_{\tau\in\Pi(\mu_{i_1},\ldots\mu_{i_p})}\int_{\prod_{k=1}^p X_{i_k}}c_{\mathcal{P}}\,d\tau.
\end{align}
Our main contribution in this paper is to show that, 
for appropriate choices of index set $\mathcal{P}$,  one can recover the optimal plans of \eqref{MONKANT} from \eqref{REDMONKANT}. In particular, we study situations in which the problem \eqref{MONKANT} admits a unique solution depending on the uniqueness of the solution for  the lower marginal problems of the form  \eqref{REDMONKANT}. This allows us to prove many uniqueness results  for multi-marginal problems when the unique optimal plan is not necessarily  induced by a map. To this end, we extensively benefit from disintegration theorems and the $c$-extremality notions. Moreover, by employing this argument,  besides recovering many standard results on the subject including the pioneering work of Gangbo-\'Swi\c ech, several new  applications will be demonstrated to evince the  applicability of this argument.
\end{abstract}

\textbf{Keywords:} Optimal transportation, multi-marginal problems,  uniqueness property  of optimal plans\\

\textbf{MSC:}  49Q20, 49N15, 49Q15
\maketitle
\section{Introduction}
Our goal in  the present work is to study the uniqueness of optimal plans for  multi-marginal Monge-Kantorovich optimization problems. For an integer $N\geq 3$, let $\mathcal{N}=\{1,\ldots,N\}$ and consider a family of Borel probability spaces $\{(X_k,\mathcal{B}_{X_k},\mu_k)\}_{k=1}^N$. Let $P(X_1\times\cdots\times X_N)$ denote the set of Borel probability measures on $\prod_{k=1}^N X_k$ and $\Pi(\mu_1,\ldots,\mu_N)$ be its convex subset containing those elements with marginal equal to $\mu_k$ on $X_k$. It means that an element $\lambda\in P(X_1\times\cdots\times X_N)$ belongs to $\Pi(\mu_1,\ldots,\mu_N)$ if and only if
\begin{align*}
    \lambda (X_1\times\cdots\times X_{k-1}\times A_k\times X_{k+1}\times\cdots\times X_N)=\mu_k(A_k),\quad\forall A_k\in\mathcal{B}_{X_k}.
\end{align*}
It can be easily seen that $\Pi(\mu_1,\ldots,\mu_N)$ is a weak-star compact subset of $P(X_1\times\cdots\times X_N)$. For a cost function $c:\prod_{k=1}^N X_k\to\mathbb{R}$ the Monge-Kantorovich problem is as follows \cite{KANTOROVICH},
\begin{align*}\tag{MKP}\label{TAGMK}
    \inf\bigg\{\int_{\prod_{k=1}^N X_k} c\,d\lambda\ :\ \lambda\in\Pi(\mu_1,\ldots,\mu_N)\bigg\} .
\end{align*}
Existence of a minimizer of \eqref{TAGMK} is guaranteed whenever the cost function is non-negative and lower semi-continuous \cite{VILLANI2009}. This problem is in  fact a relaxation of the Monge problem which is initiated for the case $N=2$ in \cite{MONGE},
\begin{align*}\tag{MP}\label{TAGM}
    \inf\bigg\{\int_{X_1} c(x_1,T_2(x_1),\ldots,T_N(x_1))\,d\mu_1(x)\ :\ T_k:X_1\to X_k,\;\text{measurable},\ T_k\#\mu_1=\mu_k\bigg\} .
\end{align*}
Here, $T_k\#\mu_1=\mu_k$ indicates that the \textit{push-forward} of the measure $\mu_1$ through $T_k$ is $\mu_k$, i.e. $\mu_k(A_k)=\mu_1(T_k^{-1}(A_k))$, for $A_k\in \mathcal{B}_{X_k}$.
In contrary to Monge-Kantorovich problem, Monge problem may not have any solution. Furthermore, by \cite{KANTOROVICH,KELLERER} we know that the dual problem, by means of convex optimization, to \eqref{TAGMK} is  formulated as follows,
\begin{align*}\tag{DMKP}\label{TAGDMK}
    \sup\bigg\{\sum_{k=1}^N\int_{X_k} \phi_k\,d\mu_k\ :\ \phi_k\in L_1(X_k,\mu_k),\ \sum_{k=1}^N\phi_k\leq c \bigg\}.
\end{align*}
In the sequel, a solution to  \eqref{TAGMK} will be  called an \textit{optimal plan}, while a $(N-1)$-tuple $(T_2,\ldots,T_N)$ that is a solution to \eqref{TAGM} is said to be an \textit{optimal transport map}. Moreover, solutions to \eqref{TAGDMK} are of the form $(\phi_1,\ldots,\phi_N)$ where $\phi_k\in L_1(X_k,\mu_k)$ for $k\in\mathcal{N}$ and they are called \textit{potentials}.\\

The classical transport problem, the case $N=2$, has been studied tremendously. One approach for finding solutions has been done by means of derivatives of the cost function which is called \textit{twist} (or according to \cite{FATHIFIGALLI}, \textit{left twist}) condition. Precisely, if for every fixed $x_1$ the map $x_2\mapsto D_{x_1}c(x_1,x_2)$ is injective and the measure $\mu_1$ is absolutely continuous with respect to Lebesgue measure, one can expect the uniqueness of the solution to Monge problem, the unique solution $\lambda$ whose support is the graph of a unique optimal transport map $T$ \cite{CAFFARELLI,GANGBOTHESIS,GANGBOMCCANN,LEVIN,VILLANI2009}. Here, $D_{x_1}c$ denotes the derivative of the function $c$ with respect to the first variable $x_1$. Also, in \cite{AHMADKIMMCCANN}, by restricting the number of critical points of the cost function, another criterion, namely, \textit{sub-twist} condition, has been proposed and uniqueness of solution to \eqref{TAGMK} was proven. On top of that, in \cite{GANGBOMCCANN2,KIT}, the authors dealt with the sub-twisted case of Euclidean distance squared on the boundaries of (uniformly or strictly) convex domains and they obtained unique solution (either induced by a map or not). Moreover, in \cite{CHAMPIONDEPASCALE}, authors introduced \textit{strong twist} condition and obtained results in which the optimal plans are induced by measurable maps. Generally, the twist condition may fail in some cases, even when the underlying spaces are compact manifolds and the cost function is smooth, \cite{BIANCHINICARAVENNA,CHIAPPORIMCCANNNESHEIM}. This led to another attempt of finding new criteria to describe the  support of optimal plans which are named \textit{$m$-twist} and \textit{generalized-twist} conditions \cite{MOMENI-CHAR}. In these cases, it is obtained that the support of optimal plans are concentrated  on the union of several  graphs of measurable maps.

From another point of view, it is  known  that if \eqref{TAGMK} attains a unique solution then the unique solution has to be an extreme point of $\Pi(\mu_1,\ldots,\mu_N)$. Therefore, characterizing extreme points of this set is another aspect of research in this topic. Initially, for the case $N=2$, it is shown that a measure $\lambda$ is an extreme point of $\Pi(\mu_1,\mu_2)$ if and only if the space $L_1(X_1,\mu_1)+L_1(X_2,\mu_2)$ is dense in $L_1(X_1\times X_2,\lambda)$, \cite{DOUGLAS,LINDENSTRAUSS}. A more practical criterion for recognizing extreme points of $\Pi(\mu_1,\mu_2)$ was given in \cite{BENESSTEPAN} by means of \textit{aperiodic decomposition} of the support of the extreme measure. In \cite{MOMENI-DOUBLY}, equivalent to the aperiodic decomposition, the  author introduced the notion of maps with \textit{strongly disjointed graphs} from which one can get a characterization of the support of extreme points of $\Pi(\mu_1,\mu_2)$. On top of that, in \cite{MOMENI-RIFFORD} the notions of \textit{$c$-extreme} and \textit{$(c,P)$-extreme} sets have been developed which seem to be an  efficient tool  in order to study the  extreme points of $\Pi(\mu_1,\mu_2)$. These notions are the building blocks of our present work.\\

Another approach to optimal transport problems has been done in terms of decomposition of measures in $\Pi(\mu_1,\mu_2)$ by virtue of \textit{disintegration} theorems. Indeed, some authors took advantage of this perspective  to approach optimal transport problem in order to characterize the optimal measures \cite{BIANCHINICARAVENNA,FIGALLI}. For instance, in \cite{BIANCHINICARAVENNA}, authors studied situations in which a transference plan is extremal, optimal, and also  when  a unique transference plan  is concentrated on a specific set.\\

The multi-marginal case, ($N\geq 3$), appears in a vast variety of branches of sciences such as finance, economics, physics and electronics \cite{BEIGLBOCKLABORDPENKER,BUTTAZZODEPASCALEGORIGIORGI,CARLIEREKELAND,CHIAPPORIMCCANNNESHEIM,COTARFRIESECKEKLUPPELBERG,GALICHONLABORDTOUZI,GHOUSSOUBMOAMENI} and it has drawn attention of researchers recently. Various approaches have been taken to Monge and Monge-Kantorovich problems. In \cite{KIMPASS}, a general condition for Monge problem has been given in terms of twist condition on \textit{splitting} sets which is in fact a sort of generalization of twist condition for the case $N=2$. 
In \cite{MOAMENIPASS}, a modified version of the splitting sets jointly with $m$-twist and generalized-twist conditions have been employed to establish that the solutions of Monge-Kantorovich problem are concentrated  on several graphs. There are also several interesting works in which  uniqueness and non-uniqueness of solutions to Monge problem for specific  cost functions are addressed \cite{BINDINI,CARLIER,COLOMBODEPASCALEDIMARINO,GANGBOSWIECH,GELORINKAUSAMORAJALA,PASS1,PASS2,PASS3,PASSVARGAS}.\\

In the present paper, we study  the multi-marginal Monge-Kantorovich problems by analysing  a  reduced version of it. Indeed, for ordered subsets $\mathcal{P}=\{i_1,\ldots,i_p\}\subsetneq\mathcal{N}$ and $\mathcal{Q}=\mathcal{N}\backslash\mathcal{P}=\{j_1,\ldots,j_q\}$, we associate to  the cost function $c$  a new cost function $c_\mathcal{P}:\prod_{k=1}^p X_{i_k}\to\mathbb{R}$, by using the potentials $\{\phi_k\}_{k=1}^N$,  
\begin{align}\label{GEN-CP-function}
    c_\mathcal{P}(x_{i_1},\ldots,x_{i_p})=\inf_{\prod_{k=1}^q X_{j_k}}\big\{c(x_1,x_2,\ldots,x_N)-\sum_{k=1}^q\phi_{j_k}(x_{j_k})\big\}.
\end{align}
Correspondingly, we introduce the \textit{reduced Monge-Kantorovich} problem below
\begin{align*}\tag{RMKP}\label{TAGRMK}
    \inf\bigg\{\int_{\prod_{k=1}^p X_{i_k}} c_\mathcal{P}\,d\tau\ :\  \tau\in\Pi(\mu_{i_1},\ldots\mu_{i_p})\bigg\}.
\end{align*}
The cost function $c_\mathcal{P}$ inherits many properties of $c$, specifically, it will be shown that any  optimal plan of \eqref{TAGMK} restricted to   the space $\prod_{k=1}^p X_{i_k}$ is in fact an optimal plan of \eqref{TAGRMK}. Utilizing this fact, by appropriate choices for the subset $\mathcal{P}$, one can  conclude the uniqueness of the solution to \eqref{TAGMK} from the uniqueness of optimal plans of \eqref{TAGRMK}. More importantly, we determine the situations in which \eqref{TAGM} admits solutions depending on the case in which \eqref{TAGRMK} has solutions induced by measurable maps. Furthermore, we exhibit  circumstances under which \eqref{TAGMK} attains an unique optimal plan that  is not concentrated  on a graph of  a measurable  map. On top of that, necessary and sufficient conditions for the existence of the unique and non-unique solution to multi-marginal Monge problem will be investigated.\\

This paper is organized as follows: Section \ref{SectionPreliminaries} is devoted to the preliminaries and previously obtained results for two marginal optimization problems which will be extensively applied in the next sections. In Section \ref{SectionReduction}, we describe the reduction approach to optimization problems and describe the relation between an optimal plan of a multi-marginal problem and the reduced form of it by means of $c$-extremality notions. In  Section \ref{SectionDisintegration}, by benefiting from the disintegration of measures and the gluing lemma, we recover optimal plans of \eqref{TAGMK} from \eqref{TAGRMK}. Also, explicit form of optimal plans of the original multi-marginal problem will be constructed from its reduced version. Section \ref{SectionApplication} is designated to the applications of the  results established in the previous sections.  Finally, Section \ref{SectionAppendix} is where we recall many standard tools  that have been utilized throughout the paper.

\section{Preliminaries and the uniqueness in two marginal problems}\label{SectionPreliminaries}
To present our results, we need to recall some notations and preliminaries which will be frequently employed in the sequel together with some previously obtained results which are somehow crucial ingredients of this paper. We initiate this section by setting up the notations and basic conventions, then we state our definitions and theorems.\\

\noindent
For $N \geq 2$, let 
$\{(X_k,\mathcal{B}_{X_k},\mu_k)\}_{k=1}^N$ be a family of Borel probability measures on complete separable metric spaces. Depending on the number of marginals we may use different notations. For $N\leq 3$ we mainly use $(X_1,\mu_1)=(X,\mu)$, $(X_2,\mu_2)=(Y,\nu)$ and $(X_3,\mu_3)=(Z,\gamma)$ while for $N\geq 4$ we stick to the notation $(X_k,\mu_k)$.\\

\noindent
A set $\mathcal{N}=\{1,\ldots,N\}$ and its subset $\mathcal{P}=\{i_1,\ldots,i_p\}\subseteq\mathcal{N}$ will be considered as ordered subsets, that is,
\begin{align*}
i_1<i_2<\cdots<i_p.
\end{align*}
For any topological space $X$, the symbol $P(X)$ stands for the set of Borel probability measures on $X$. If $\lambda\in P(X)$, a set $A\subseteq X $ is said to be $\lambda$-negligible, if it is contained in a Borel set of zero $\lambda$-measure, or equivalently, we may say $X\backslash A$ has $\lambda$-full measure.\\

\noindent
We say $\lambda$ is \textit{concentrated} on a set $\mathcal{S}\subseteq X$, if $\mathcal{S}$ has a  $\lambda$-full measure. On top of that, $\text{Spt}(\lambda)$ denotes the \textit{support} of the measure $\lambda$ that is the smallest closed set on which $\lambda$ is concentrated.

\noindent
Furthermore, $\Pi(\mu_1,\ldots,\mu_N)\subseteq P(\prod_{k=1}^N X_k)$ is the convex subset of Borel probability measures with marginal $\mu_k$ on $X_k$ for $k\in\mathcal{N}$. Indeed, if the map $\pi_{k_0}:\prod_{k=1}^N X_k\to X_{k_0}$ is the natural projection map, then the push-forward of a measure $\lambda\in\Pi(\mu_1,\ldots,\mu_N)$ through $\pi_{k_0}$ is the measure $\mu_{k_0}$. In notation, 
\begin{align*}
    \pi_{k_0}\#\lambda=\mu_{k_0}.
    \end{align*}

\noindent
An element $\lambda\in\Pi(\mu_1,\ldots,\mu_N)$ is called an \textit{extreme} point if it can not be expressed as a convex combination of two distinct measures in $\Pi(\mu_1,\ldots,\mu_N)$, i.e. if there exist $\lambda_1,\lambda_2\in\Pi(\mu_1,\ldots,\mu_N) $ and $ \alpha\in (0,1)$ such that
\begin{align*}
\lambda=\alpha\lambda_1+(1-\alpha)\lambda_2, 
\end{align*}
then it implies that $\lambda=\lambda_1=\lambda_2$.\\

\noindent
Now we take the first step by describing the notion of $c$-extremality. Assume that $c:\prod_{k=1}^N X_k\to\mathbb{R}$ is a lower semi-continuous function such that there exist functions $a_k\in L_1(X_k,\mu_k)$ for $k\in\mathcal{N}$ and that 
\begin{align}\label{EXIS-INEQ}
    c(x_1,\ldots,x_N) \geq \sum_{k=1}^N a_k(x_k),\quad (x_1,\ldots,x_N)\in \prod_{k=1}^N X_k. 
    \end{align}
    The latter condition is to guarantee the  existence of a  solution to \eqref{TAGMK}. Here,  we always assume that  the $N$-marginal Monge-Kantorovich problem admits a solution, and consequently,  one may also consider different assumptions on the cost function $c$  to ensure the existence of at least one solution   to the $N$-marginal problem (see \cite{BEIGLBOCKSCHACHERMAYER,KELLERER,VILLANI2009}). 
Denote by $L_1(X_k,\mu_k)$ the Banach space of real-valued $\mu_k$-integrable functions on $X_k$.  Regarding the relation between \eqref{TAGMK} and \eqref{TAGDMK}, by \cite{VILLANI2009}, it is a well-known fact that there exist $\phi_k\in L_1(X_k,\mu_k)$, for $k\in\mathcal{N}$ which are solutions to \eqref{TAGDMK} together with a subset $\mathcal{S}\subseteq\prod_{k=1}^N X_k$ such that for any optimal plan $\lambda$ of \eqref{TAGMK}, we have
\begin{align}
    &\int_{\prod_{k=1}^N X_k} c\,d\lambda=\sum_{k=1}^N \int_{X_k} \phi_k\,d\mu_k,\label{SPLT-SET1}\\
    &\text{Spt}(\lambda)\subseteq\mathcal{S}\subseteq\bigg\{(x_1,\ldots,x_N) \in \prod_{k=1}^N X_k\ :\ c(x_1,\ldots,x_N)=\sum_{k=1}^N\phi_k(x_k) \bigg\},\label{SPLT-SET2}\\    
   & \sum_{k=1}^N \phi_k(x_k)\leq c(x_1,\ldots,x_N),\qquad (x_1,\ldots,x_N)\in\prod_{k=1}^N X_k.\label{SPLT-SET3}
\end{align}
 The aforementioned subset $\mathcal{S}$ is called a \textit{minimizing set} which depends on the function $c$ and marginals $\mu_k$. Additionally, it enjoys the following crucial property
\begin{align*}
\lambda\in\Pi(\mu_1,\ldots,\mu_N)\ \text{is an optimal plan if and only if}\ \lambda(\mathcal{S})=1.
\end{align*}
Another crucial feature of the set $\mathcal{S}$ is being $c$-\textit{cyclically monotone}, meaning that for any finite collection of the points
\begin{align*}
    \big\{(x^m_1,\ldots,x^m_N)\big\}_{m=1}^M\subseteq\mathcal{S},
\end{align*}
and for any permutation mapping $\sigma_k:\{1,\ldots,M\}\to\{1,\ldots,M\} $ the following inequality holds
\begin{align*}
    \sum_{m=1}^M c(x^m_1,\ldots,x^m_N)\leq\sum_{m=1}^M c(x^{\sigma_1(m)}_1,\ldots,x^{\sigma_N(m)}_N).
\end{align*}
Naturally,  a measure $\lambda\in\Pi(\mu_1,\ldots,\mu_N)$ is said to be $c$-cyclically monotone if it is concentrated on a $c$-cyclically monotone set. In our setting, a measure $\lambda$ is an optimal plan if and only if it is $c$-cyclically monotone (see \cite{VILLANI2009}). To see more about $c$-cyclical monotonicity  in the multi-marginal case and its related optimality consequences one can refer to  \cite{KAUSAMO2,KAUSAMO,TLIM}. 

\begin{remark}
It should be mentioned that in \cite{BEIGLBOCKLABORDPENKER,KELLERER,VILLANI2009} there have been several cases of dual attainment and the conditions under which there is no gap between \eqref{TAGMK} and \eqref{TAGDMK}. Particularly, in \cite{KELLERER}, the multi-marginal case has been studied and it is obtained that in the case that the cost function $c$ is measurable which satisfies \eqref{EXIS-INEQ} for some $a_k\in L_1(X_k,\mu_k)$ for $k\in\mathcal{N}$ and $X_i$'s are Polish spaces there is no gap between infimum in \eqref{TAGMK} and supremum in \eqref{TAGDMK} (see \cite[Theorem 2.14 and Corollary 2.18]{KELLERER}). Moreover, if $c$  is lower semi-continuous the minimizer for \eqref{TAGMK} exists (see \cite[Theorem 2.19]{KELLERER}). On top of that, if $c$ is such that the supremum in \eqref{TAGDMK} is finite and that \eqref{EXIS-INEQ} holds, then the maximizer for \eqref{TAGDMK} exists (see \cite[Theorem 2.21]{KELLERER}). Therefore, for the case that the function $c$ is continuous and Borel probability measures $\mu_k$'s are compactly supported or the domains $X_k$'s are closed smooth manifolds, there is no gap and attainment of infimum and supremum of \eqref{TAGMK} and \eqref{TAGDMK} are guaranteed.
\end{remark}

\subsection{The two marginal case}
Here,  we recall the notion of $c$-extremality  from \cite{MOMENI-RIFFORD}. Consider Borel probability spaces $(X,\mathcal{B}_X,\mu)$ and $(Y,\mathcal{B}_Y,\nu)$. In this case, the set $\mathcal{S}$ in \eqref{SPLT-SET2} is contained in the following set
\begin{align*}
\bigg\{ (x,y)\in X\times Y\ :\ c(x,y)=\phi_1(x)+\phi_2(y) \bigg\}.
\end{align*}
Consider the following set-valued functions
\begin{align*}
    &F_\mathcal{S}:X\to 2^Y,\quad F_\mathcal{S}(x)=\{y\in Y\ :\ (x,y)\in \mathcal{S}\},\\
    &f_{\mathcal{S},c}:X\to 2^Y,\quad f_{\mathcal{S},c}(x)=\argmax\{c(x,y)\ :\ y\in F_\mathcal{S}(x)\}.
\end{align*}
Domains of these functions are defined as
\begin{align*}
    \text{Dom}(F_\mathcal{S})=\{x\ :\ F_\mathcal{S}(x)\neq \emptyset\},\quad \text{Dom}(f_{\mathcal{S},c})=\{x\ :\ f_{\mathcal{S},c}(x)\neq \emptyset\}.
\end{align*}
    Put
\begin{align*}
    D(\mathcal{S},c)=\{x\in X\ : f_{\mathcal{S},c}(x)\text{ is a singleton}\}.
\end{align*}
We give the definition of a $c$-extreme minimizing set.
\begin{definition}\label{DEF-CEXT}
    Say that a minimizing set $\mathcal{S}$ is $c$-extreme, if there exist $\mu$ and $\nu$-full measure sets $X_0$ of $X$ and $Y_0$ of $Y$ respectively  such that the two following conditions hold
    \begin{align*}
        &(i)\; \text{Dom}(F_\mathcal{S})\cap X_0=\text{Dom}(f_{\mathcal{S},c})\cap X_0,\\
        &(ii)\; \forall x_1\neq x_2\in X_0 ,\quad \bigg(F_{\mathcal{S}}(x_1)\backslash\{y_1\}\bigg)\cap \bigg(F_{\mathcal{S} }(x_2)\backslash\{y_2\}\bigg)\cap Y_0=\emptyset,\quad\forall y_i\in f_{\mathcal{S},c}(x_i),\ i=1,2. 
    \end{align*}
\end{definition}

\begin{remark}
    It should be remarked that the celebrated twist and sub-twist conditions can be extracted as special cases of $c$-extremality. In fact, in the case that the $\mu$-full measure set $X_0$ is contained in $D(\mathcal{S},c)$, then for all $x\in X_0$ the set $f_{\mathcal{S},c}(x)$ is singleton and these two notions can be derived. Precisely, by \cite{VILLANI2009}, we say that the function $c$ satisfies twist condition, if for fixed $x\in X$, the map $y\mapsto D_xc(x,y)$ is injective. By means of $c$-extremality and noting the fact that for all $x\in X_0$ the set $f_{\mathcal{S},c}(x)$ is singleton, this condition reads as
    \begin{align*}
        F_{\mathcal{S} }(x)\backslash f_{\mathcal{S},c}(x)=\emptyset,\quad\forall x\in X_0.
    \end{align*}
    Additionally, by \cite{CHIAPPORIMCCANNNESHEIM}, the sub-twist condition is that for each $y_1\neq y_2 \in Y$ the map $x \mapsto c(x, y_1) -c(x, y_2)$, defined on $X$, has no critical points, save at most one global minimum and at most one global maximum. In terms of $c$-extremality, we have
    \begin{align*}
        \bigg(F_{\mathcal{S} }(x_1)\backslash f_{\mathcal{S},c}(x_1)\bigg)\cap F_{\mathcal{S} }(x_2)=\emptyset ,\quad \forall x_1\neq x_2\in X_0.
    \end{align*}
\end{remark}
Also, the notion of $c$-extremality has been developed for the situations in which the space $Y$ has measurable partitions, and consequently, the definition of $(c,P)$-extremality has been obtained \cite{MOMENI-RIFFORD}. In details, let $P=\{Y_i\}_{i\in\mathbb{I}}$ with $\mathbb{I}\subseteq\mathbb{N}$ be a Borel ordered partition for $Y$ meaning that $Y_i$'s are pairwise disjoint Borel sets with union equal to $Y$. Consider the map $\iota:\text{Dom}(F)\to\mathbb{N}$ with the following definition
\begin{align*}
    \iota(x)=\min\{i\in\mathbb{I}\ : F_{\mathcal{S}}(x)\cap Y_i\neq \emptyset\}.
\end{align*}
We define the corresponding set-valued function $f_{\mathcal{S},c,P}:X\to 2^Y$ by
\begin{align*}
    f_{\mathcal{S},c,P}(x)=\argmax\{c(x,y)\ :\ y\in F_{\mathcal{S}}(x)\cap Y_{\iota(x)}\},
\end{align*}
together with the following set
\begin{align*}
    D(\mathcal{S}, c,P)=\{x\in X\ : f_{\mathcal{S},c,P}(x)\; \text{is a singleton}\}.
\end{align*}
Similarly, we have $\text{Dom}(f_{\mathcal{S},c,P})=\{x\in X\ :\ f_{\mathcal{S},c,P}(x)\neq \emptyset\}$. Now, we state the definition of $(c,P)$-extreme minimizing sets which is indeed a local version of $c$-extreme minimizing sets, i.e. when $P=\{Y\}$ these two definitions coincide.
   \begin{definition}
   A measure $\lambda$ is called $(c,P)$-extreme, if there exist $\mu$ and $\nu$-full measure sets $X_0$ of $X$ and $Y_0$ of $Y$ respectively such that
    \begin{align*}
        &(i)\; \text{Dom}(F_{\mathcal{S} })\cap X_0=\text{Dom}(f_{\mathcal{S},c,P})\cap X_0,\\
        &(ii)\; \forall x_1\neq x_2\in X_0 ,\quad \bigg(F_{\mathcal{S}}(x_1)\backslash\{y_1\}\bigg)\cap \bigg(F_{\mathcal{S}}(x_2)\backslash\{y_2\}\bigg)\cap Y_0=\emptyset,\quad \forall y_i\in f_{\mathcal{S},c,P}(x_i),\ i=1,2. 
    \end{align*}
\end{definition}
By employing these two notions of $c$-extremality the following theorem is established (see \cite[Theorem 2.10 and Theorem 2.12]{MOMENI-RIFFORD}).
\begin{theorem}\label{THc-EXT}
    Let $X$ and $Y$ be smooth closed manifolds equipped with probability measures $\mu$ and $\nu$, respectively. Assume that $c:X\times Y\to \mathbb{R}$ is a continuous function and $\mathcal{S}\subseteq X\times Y$ is a minimizing set. If
    \begin{enumerate}
        \item\label{THc-EXT1} $\mathcal{S}$ is $c$-extreme, or
        \item\label{THc-EXT2} $P=\{Y_i\}_{i\in\mathbb{I}}$ is a Borel ordered partition for $Y$ such that $\mathcal{S}$ is $(c,P)$-extreme,
    \end{enumerate}
   then there exists a unique $\lambda\in\Pi(\mu,\nu)$ with $\lambda(\mathcal{S})=1$.
\end{theorem}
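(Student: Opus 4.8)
The plan is to turn $c$-extremality into a decomposition of $\mathcal{S}$ into two ``transversal'' graphs and then invoke the rigidity theory of extreme points of $\Pi(\mu,\nu)$. Existence needs no work: since $X\times Y$ is compact and $c$ continuous, the two-marginal Monge--Kantorovich problem has an optimal plan, and by the defining property of a minimizing set every optimal plan is concentrated on $\mathcal{S}$, so only uniqueness requires an argument. Fix Kantorovich potentials $(\phi_1,\phi_2)=:(\phi,\psi)$, so that $c(x,y)=\phi(x)+\psi(y)$ on $\mathcal{S}$; since $\phi(x)$ is constant along a fibre, $f_{\mathcal{S},c}(x)=\argmax\{\psi(y):y\in F_{\mathcal{S}}(x)\}$ for every $x$.

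First I would build the decomposition. By condition $(i)$, for $\mu$-a.e.\ $x\in X_0$ with $F_{\mathcal{S}}(x)\ne\emptyset$ one has $f_{\mathcal{S},c}(x)\ne\emptyset$, so a measurable selection theorem furnishes a Borel map $g$, defined $\mu$-a.e.\ on $X_0$, with $g(x)\in f_{\mathcal{S},c}(x)$. Let $\mathcal{S}_1$ be the graph of $g$ and $\mathcal{S}_2:=(\mathcal{S}\cap(X_0\times Y_0))\setminus\mathcal{S}_1$, so that every $\lambda\in\Pi(\mu,\nu)$ with $\lambda(\mathcal{S})=1$ is (up to a $\lambda$-null set) concentrated on $\mathcal{S}_1\cup\mathcal{S}_2$. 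Condition $(ii)$, applied to $x_1\ne x_2$ in $X_0$ with the representatives $y_i=g(x_i)\in f_{\mathcal{S},c}(x_i)$, shows that for each $y\in Y_0$ there is at most one $x$ with $(x,y)\in\mathcal{S}_2$; thus $\mathcal{S}_1$ is a graph over $X$ and $\mathcal{S}_2$ is a graph over $Y$, say of a Borel map $h$. Restricting $\lambda$ to the two pieces, and using that a measure concentrated on the graph of a Borel map equals the pushforward of the relevant marginal, we obtain $\lambda=(\mathrm{id}\times g)\#a+(h\times\mathrm{id})\#b$ with sub-measures $a\le\mu$ on $X$ and $b\le\nu$ on $Y$; the two marginal constraints then force $b=\nu-g\#a$ and $a-(h\circ g)\#a=\mu-h\#\nu$. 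Hence $\lambda$ is completely determined by the sub-measure $a$, which must solve this fixed affine equation subject to $0\le a\le\mu$ and $g\#a\le\nu$.

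Consequently, uniqueness of $\lambda$ amounts to saying that the only finite signed measure $e$ with $|e|\le\mu$ and $e=\Phi\#e$, where $\Phi:=h\circ g$, is $e=0$. This is where optimality enters. If $(h(y),y)\in\mathcal{S}_2$ then $y\in F_{\mathcal{S}}(h(y))$, whence $\psi(y)\le\max\{\psi(y'):y'\in F_{\mathcal{S}}(h(y))\}=\psi(g(h(y)))$; thus $\psi\circ g$ is non-decreasing along every forward $\Phi$-orbit. A short case analysis of condition $(ii)$ in the case where $f_{\mathcal{S},c}$ is not single-valued upgrades this to a \emph{strict} increase over any two consecutive steps, so $\Phi$ has no periodic orbit. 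Therefore $(\mathcal{S}_1,\mathcal{S}_2)$ exhibits $\mathcal{S}$ as a union of a graph over $X$ and a graph over $Y$ whose glued structure is aperiodic in the sense of \cite{BENESSTEPAN} (equivalently, has strongly disjointed graphs, \cite{MOMENI-DOUBLY}), and the corresponding uniqueness criterion for plans in $\Pi(\mu,\nu)$ concentrated on such a set gives $e=0$. For the $(c,P)$-extreme case the same scheme applies with $f_{\mathcal{S},c,P}$ and a selection $g(x)\in F_{\mathcal{S}}(x)\cap Y_{\iota(x)}$: the only new feature is that the block index $\iota$ is non-increasing along $\Phi$-orbits, hence eventually constant, after which the $\psi$-monotonicity takes over, again excluding periodic orbits.

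The step I expect to be the main obstacle is this aperiodicity argument: the strict monotonicity of $\psi\circ g$ (resp.\ of $(\iota,\psi\circ g)$ in lexicographic order) degenerates exactly when $f_{\mathcal{S},c}$ is multivalued, so one must squeeze strict increase over pairs of steps out of condition $(ii)$ and then run the aperiodic-decomposition machinery of \cite{BENESSTEPAN,MOMENI-DOUBLY} carefully; one also has to keep the measure-theoretic bookkeeping honest --- the choice of the $\mu$- and $\nu$-full sets $X_0,Y_0$, Borel versions of $g$ and $h$, the measurable selection, and the disintegration identifying $\lambda|_{\mathcal{S}_i}$ with a pushforward. Everything upstream --- the two-graph decomposition and the reduction to the affine equation for $a$ --- is then routine.
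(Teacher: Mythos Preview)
The paper does not prove this theorem; it is quoted as a preliminary result from \cite{MOMENI-RIFFORD} (Theorems~2.10 and~2.12 there), so there is no in-paper proof to compare against. Your outline is, however, exactly the strategy of that reference: split $\mathcal{S}$ (up to null sets) into the graph of a measurable selection $g$ from $f_{\mathcal{S},c}$ (resp.\ $f_{\mathcal{S},c,P}$) and an anti-graph of a map $h$, use the Kantorovich potential $\psi$ (resp.\ the pair $(\iota,\psi)$) as the monotone quantity along $h\circ g$-orbits to obtain the ``strongly disjoint graphs'' / aperiodic structure of \cite{MOMENI-DOUBLY,BENESSTEPAN}, and conclude uniqueness and extremality from that criterion. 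You have also correctly isolated the one genuinely delicate step---upgrading $\psi(y)\le\psi(g(h(y)))$ to a strict increase when $f_{\mathcal{S},c}$ is multivalued---which is precisely where condition~$(ii)$ of the definition is used in \cite{MOMENI-RIFFORD}.
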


\begin{remark}\label{REM-NOTICE}
\begin{enumerate}
    \item \label{REM-NOTICE1}
A generalizations of the Theorem \ref{THc-EXT} is also given in \cite{MOMENI-RIFFORD}, as well. In detail, in the case that $X$ and $Y$ are Polish spaces, the non-negative cost function $c$ is measurable, lower semi-continuous, and $\mu\otimes\nu$-a.e. finite, it is established that existence of a minimizing set which is $c$-extreme guarantees existence of the unique solution.
\item\label{REM-NOTICE2}
It is crucial to note that same results in Theorem \ref{THc-EXT} can be obtained when we choose $\mathcal{S}=\text{Spt}(\lambda)$, where $\lambda$ is an arbitrary optimal plan. In this case, we say the optimal measure $\lambda$ is $c$-extreme, and we may use the notations $F_{\lambda}$ and $f_{\lambda ,c}$.
\item\label{REM-NOTICE3} In fact, in the proof of the Theorem \ref{THc-EXT} it has been shown that the unique optimal $\lambda$ is an extreme point of $\Pi(\mu,\nu)$. We will benefit from this fact in our proofs.
\end{enumerate}
\end{remark}
Additionally, by taking advantage of the notion of sub-twist condition, another result regarding the uniqueness of the solution of \eqref{TAGMK} has been obtained \cite[Theorem 5.2]{AHMADKIMMCCANN}.
\begin{theorem}
    Let $(X,\mathcal{B}_X,\mu)$ and $(Y,\mathcal{B}_Y,\nu)$ be Borel probability spaces with complete separable manifolds $X$ and $Y$, and $\mu$ absolutely continuous in each coordinate chart on $X$. Assume that $c:X\times Y\to\mathbb{R}$ is a bounded continuously differentiable cost function such that it satisfies the sub-twist condition. Moreover, let $D_xc(x, y)$ be locally bounded in $x$, uniformly in $Y$ . Then the associated Monge-Kantorovich problem admits a unique solution $\lambda\in\Pi(\mu,\nu)$ that is concentrated on the disjoint union of the graph and anti-graph of two maps.
\end{theorem}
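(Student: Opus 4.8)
The plan is to produce a minimizing set that is $c$-extreme in the sense of Definition \ref{DEF-CEXT}, so that uniqueness falls out of the Polish-space form of Theorem \ref{THc-EXT} (see Remark \ref{REM-NOTICE}), and then to read off the graph/anti-graph splitting from the first-order information carried by the optimal support. The mechanism throughout is: the dual potentials force a pointwise identity $D_xc(x,y)=D\phi(x)$ along the contact set, hence distinct fibre points create a critical point of a difference of the cost, at which sub-twist leaves only two possibilities.

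First I would fix Kantorovich potentials $\phi\in L_1(X,\mu)$, $\psi\in L_1(Y,\nu)$ for \eqref{TAGDMK}, with $\phi=\psi^c$, $\psi=\phi^c$, and work with the full minimizing set $\mathcal{S}=\{(x,y):c(x,y)=\phi(x)+\psi(y)\}$. Since $c$ is continuous, $\phi$ is locally Lipschitz (it is the infimum over $y$ of the family $x\mapsto c(x,y)-\psi(y)$, which is uniformly locally Lipschitz because $D_xc$ is locally bounded in $x$ uniformly in $Y$ and $c$ is bounded), and $\psi$ is upper semi-continuous, the function $c-\phi\oplus\psi$ is lower semi-continuous and non-negative, so $\mathcal{S}$ is closed; by \eqref{SPLT-SET1}--\eqref{SPLT-SET3} it carries every optimal plan, and conversely any $\lambda\in\Pi(\mu,\nu)$ with $\lambda(\mathcal{S})=1$ is optimal. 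Because $\mu$ is absolutely continuous in each chart, $\phi$ is differentiable on a $\mu$-full set $X_0$, which I also take inside $\pi_X(\mathcal{S})$. The envelope step then gives the key fact: for $x\in X_0$ and $y\in F_{\mathcal{S}}(x)$, the non-negative function $x'\mapsto c(x',y)-\phi(x')-\psi(y)$ has an interior minimum at $x$ (no boundary on a manifold), so $D_xc(x,y)=D\phi(x)$; in particular $D_xc(x,\cdot)$ is constant on the fibre $F_{\mathcal{S}}(x)$.

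Next I would feed this into the sub-twist hypothesis. If $y_1\neq y_2\in F_{\mathcal{S}}(x)$ with $x\in X_0$, then $x$ is a critical point of $x'\mapsto c(x',y_1)-c(x',y_2)$, hence by sub-twist its unique global minimum or unique global maximum. Declaring $y_1\prec_x y_2$ when $x$ is the global maximum of $c(\cdot,y_1)-c(\cdot,y_2)$ and summing the defining inequalities shows $\prec_x$ is a strict total order on $F_{\mathcal{S}}(x)$; and a short computation — evaluate the appropriate difference $c(\cdot,\bar y)-c(\cdot,y)$ (where $\bar y\in F_{\mathcal S}(x_1)$ beats $y$ in the $c$-value) at a second base point $x_2$, use $c\ge\phi\oplus\psi$ to force $\bar y\in F_{\mathcal S}(x_2)$, and invoke \emph{uniqueness} of the global extremum to collapse $x_1=x_2$ — yields $\big(F_{\mathcal{S}}(x_1)\setminus f_{\mathcal{S},c}(x_1)\big)\cap F_{\mathcal{S}}(x_2)=\emptyset$ for $x_1\neq x_2$ in a $\mu$-full set, i.e. condition $(ii)$ of Definition \ref{DEF-CEXT}. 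Condition $(i)$ follows once fibres are known to be $\mu$-a.e.\ finite, so that the $\argmax$ defining $f_{\mathcal{S},c}$ is attained; this finiteness is itself a consequence of sub-twist (three fibre points over a positive-measure set of base points would, for a fixed pair, force two distinct base points to realize the same global extremum of the corresponding difference, against uniqueness, using non-atomicity of $\mu$ and $c$-cyclical monotonicity of $\mathcal{S}$). With $\mathcal{S}$ certified $c$-extreme, Theorem \ref{THc-EXT} produces a unique $\lambda\in\Pi(\mu,\nu)$ with $\lambda(\mathcal{S})=1$, and since $\mathcal{S}$ carries every optimal plan this is the unique solution of \eqref{TAGMK}. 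For the structure: by the above, $\mu$-a.e.\ fibre is $\{T(x),S(x)\}$ with $T(x)=\max_{\prec_x}F_{\mathcal{S}}(x)$ and $S(x)=\min_{\prec_x}F_{\mathcal{S}}(x)$; a measurable selection makes $T,S$ Borel, and disintegrating $\lambda$ over $X$ shows it lives on $\mathrm{graph}(T)\cup\mathrm{graph}(S)$ — one a genuine graph, the other (reparametrised from $Y$) an anti-graph — the overlap lying in the graph $\{T=S\}$.

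The main obstacle is the implication \textit{sub-twist $\Rightarrow$ $c$-extreme}: one must deal with both branches of the dichotomy (the global-minimum alternative is the delicate one and appears to need discarding a further $\mu$-null set together with $c$-cyclical monotonicity of $\mathcal{S}$), and one must control the non-compactness of $Y$ so that the fibrewise $\argmax$'s are genuinely attained — which is exactly where the $\mu$-a.e.\ finiteness of fibres does real work. The remaining ingredients — existence of an optimal plan, local Lipschitzness of $\phi$, the envelope identity, and the measurable selection — are routine.
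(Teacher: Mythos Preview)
The paper does not prove this theorem at all: it is quoted as background from \cite[Theorem~5.2]{AHMADKIMMCCANN}, and the surrounding remark merely \emph{asserts} (without argument) that sub-twist translates into the $c$-extremality relation $\big(F_{\mathcal S}(x_1)\setminus f_{\mathcal S,c}(x_1)\big)\cap F_{\mathcal S}(x_2)=\emptyset$. So there is no in-paper proof to compare against; your route through Definition~\ref{DEF-CEXT} and Theorem~\ref{THc-EXT} is in the spirit of that remark, but you are actually trying to supply what the paper leaves unproved.

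Your sketch has a real gap precisely at the point you call ``delicate''. Take $y\in F_{\mathcal S}(x_1)\setminus f_{\mathcal S,c}(x_1)$, choose $\bar y\in F_{\mathcal S}(x_1)$ with $c(x_1,\bar y)>c(x_1,y)$, set $h(\cdot)=c(\cdot,\bar y)-c(\cdot,y)$, and assume $y\in F_{\mathcal S}(x_2)$. Cyclical monotonicity on $\{(x_1,\bar y),(x_2,y)\}\subset\mathcal S$ gives $h(x_1)\le h(x_2)$, which collapses $x_2=x_1$ only when $x_1$ is the \emph{unique global maximiser} of $h$. In the other branch of the sub-twist dichotomy $x_1$ is the unique global \emph{minimiser}, and then $h(x_1)\le h(x_2)$ is vacuous. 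Your proposed rescue, ``use $c\ge\phi\oplus\psi$ to force $\bar y\in F_{\mathcal S}(x_2)$'', does not work: unwinding gives exactly $c(x_2,\bar y)\ge\phi(x_2)+\psi(\bar y)$, the duality inequality you already have, not the equality needed for $\bar y\in F_{\mathcal S}(x_2)$. The deeper issue is that the selector $f_{\mathcal S,c}$ (the $c(x,\cdot)$-argmax) is the \emph{wrong} one here: the sign of $h(x_1)$ says nothing about which branch of sub-twist you land in, so choosing $\bar y$ by $c$-value does not pin down the extremum type of $x_1$. Your finiteness-of-fibres claim has the same defect; three $\prec_x$-ordered points in a fibre are perfectly consistent with sub-twist.

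The fix is to let the $\prec_x$ order itself play the role of the selector and skip Definition~\ref{DEF-CEXT} altogether. With the convention that $y\prec_x\bar y$ means $x$ is the unique global \emph{minimiser} of $c(\cdot,y)-c(\cdot,\bar y)$, one checks (as you did) that $\prec_x$ totally orders $F_{\mathcal S}(x)$; then for any non-$\prec_{x_1}$-minimal $y\in F_{\mathcal S}(x_1)$ there is $\bar y\in F_{\mathcal S}(x_1)$ with $\bar y\prec_{x_1}y$, and now cyclical monotonicity on $\{(x_1,\bar y),(x_2,y)\}$ combined with the global-minimum inequality forces equality and hence $x_2=x_1$ by uniqueness of the minimiser. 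Thus every $y$ that is not the $\prec_x$-bottom of its fibre determines $x$ uniquely, giving the anti-graph map $H$, while the $\prec_x$-bottom (when it exists) furnishes $G(x)$. This is essentially how \cite{AHMADKIMMCCANN} argue; note that it yields both uniqueness and the graph/anti-graph structure directly, without passing through Theorem~\ref{THc-EXT}.
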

\subsection{Optimal plans not induced by a single map}
We shall also consider cases at which the Monge-Kantorovich problem has a solution concentrating on the graph of several maps. To this end, 
 we recall some preliminaries from \cite{MOMENI-CHAR}.  First, we state the following definition which can be seen as a generalization of the well-known twist condition in optimal transportation. 
\begin{definition}\label{DEF-2-TWIST}
Let $c:X\times Y\to\mathbb{R}$ be a function for which $D_xc$ exists on its domain. For a fixed $(x_0,y_0)\in X\times Y$, consider the following set
\begin{align*}
    L(x_0,y_0)=\{y\in Y\ :\ D_xc(x_0,y)=D_xc(x_0,y_0)\}\subseteq Y.
\end{align*}
We say that $c$ satisfies
\begin{enumerate}
    \item $m$-twist condition, if for all $(x_0,y_0)\in X\times Y$ the subset $L(x_0,y_0)$ has at most $m$ elements,
    \item generalized-twist condition, if for all $(x_0,y_0)\in X\times Y$ the subset $L(x_0,y_0)$ is finite.
\end{enumerate}
\end{definition}
These definitions lead us to the existence of measures which are concentrated on the graph of several maps. In the following $C_b(X\times Y)$ denotes the space of all real-valued bounded continuous functions on $X\times Y$.
\begin{definition}\label{DEF-MULTI-GRAPH}
A measure $\lambda\in\Pi(\mu,\nu)$ is said to be concentrated on the union of the graphs of measurable maps $\{T_k\}_{k=1}^m$ with $T_k:X\to Y$, if there exists a sequence of measurable maps $\alpha_k :X\to [0,1]$, $k=1,\ldots,m$ such that for $\mu$-a.e. $x\in X$ we have $\sum_{k=1}^m\alpha_k(x)=1$ and that 
\begin{align*}
    \int_{X\times Y}f(x,y)\,d\lambda(x,y)=\sum_{k=1}^m\int_X \alpha_k(x) f(x,T_k(x))\,d\mu(x),\quad \forall f\in C_b(X\times Y).
\end{align*}
In notation, we write $\lambda=\sum_{k=1}^m (\text{id}\times T_k)\#(\alpha_k\mu_k)$.
\end{definition}

\begin{remark}
    It should be remarked that a measure $\lambda$ in Definition \ref{DEF-MULTI-GRAPH} admits the following disintegration
\begin{align*}
    \lambda=\big(\sum_{k=1}^m\alpha_k(x)\delta_{T_k(x)}\big)\otimes\mu,
\end{align*}
where $\delta_{T_k(x)}$ denotes the dirac measure at the point $T_k(x)\in Y$ which belongs to $P(Y)$.
\end{remark}

Now, we are prepared to state the following theorem (see \cite[Theorem 1.3]{MOMENI-CHAR}).
\begin{theorem}\label{MOMEN-CHAR-TH}
Let $(X,\mathcal{B}_X,\mu)$ and $(Y,\mathcal{B}_Y,\nu)$ be complete separable Riemannian manifold and Polish spaces, respectively. 
    Assume that the bounded continuous cost function $c$ satisfies the $m$-twist condition, the measure $\mu$ is non-atomic and is such that every $c$-concave function is $\mu$-almost everywhere differentiable on its domain. Then each optimal plan $\lambda\in \Pi(\mu,\nu)$ of Monge-Kantorovich problem associated with $c$ is concentrated on the union of graphs of at most $m$ maps.
\end{theorem}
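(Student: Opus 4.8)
The plan is to combine Kantorovich duality with the differentiability hypothesis on $c$-concave functions to show that, for $\mu$-almost every $x$, the fiber of $\text{Spt}(\lambda)$ over $x$ has at most $m$ points, and then to split $\lambda$ into graphs by a measurable selection. First I would invoke duality (as in \eqref{SPLT-SET1}--\eqref{SPLT-SET3} specialized to $N=2$), upgrading the potential on $X$ so that it is $c$-concave: there exist $\phi=\phi_1$ on $X$ and its $c$-transform $\phi^c=\phi_2$ on $Y$ with $\phi(x)+\phi^c(y)\le c(x,y)$ on $X\times Y$, equality on $\text{Spt}(\lambda)$, and $\phi(x)=\inf_{y\in Y}\big(c(x,y)-\phi^c(y)\big)$ for every $x$. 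Since $c$ is bounded, $\phi$ and $\phi^c$ are bounded, and by hypothesis $\phi$ is differentiable at $\mu$-a.e.\ $x$. Fix such an $x$ and any $y$ with $(x,y)\in\text{Spt}(\lambda)$: the pointwise inequality together with equality at $(x,y)$ says that $x'\mapsto c(x',y)-\phi(x')$ attains a minimum on the boundaryless manifold $X$ at $x'=x$; as $D_xc$ exists at $(x,y)$ and $\phi$ is differentiable at $x$, the first-order condition gives $D_xc(x,y)=D\phi(x)$.

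Consequently, writing $F_\lambda(x)=\{y\in Y:(x,y)\in\text{Spt}(\lambda)\}$, every $y\in F_\lambda(x)$ satisfies $D_xc(x,y)=D\phi(x)$; choosing any $y_0\in F_\lambda(x)$ we get $F_\lambda(x)\subseteq L(x,y_0)$, which has at most $m$ elements by the $m$-twist condition. Hence for $\mu$-a.e.\ $x$ the fiber $F_\lambda(x)$ is finite with $\#F_\lambda(x)\le m$.

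It remains to convert this fiberwise finiteness into a decomposition into graphs. Disintegrate $\lambda=\lambda_x\otimes\mu$ with $\lambda_x\in P(Y)$ concentrated on $F_\lambda(x)$, so that for $\mu$-a.e.\ $x$ one has $\lambda_x=\sum_{k=1}^{m}\alpha_k(x)\,\delta_{y_k(x)}$ with nonnegative weights summing to $1$ (some possibly zero, for fibers with fewer than $m$ points). Measurability of the atoms and weights follows from a standard selection argument: via a Borel isomorphism send $Y$ onto a Borel subset of $\mathbb{R}$, order the finitely many atoms of $\lambda_x$, let $T_k(x)$ be the $k$-th one (with a fixed convention when fewer than $k$ atoms are present) and $\alpha_k(x)=\lambda_x(\{T_k(x)\})$; measurability in $x$ comes from the $\mu$-measurable dependence $x\mapsto\lambda_x$ in the disintegration (alternatively, cover $\text{Spt}(\lambda)$ up to a $\lambda$-null set by at most $m$ Borel graphs using the Lusin--Novikov uniformization theorem for Borel sets with finite sections). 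This produces $\lambda=\sum_{k=1}^m(\text{id}\times T_k)\#(\alpha_k\mu)$ in the sense of Definition \ref{DEF-MULTI-GRAPH}. The main obstacle is precisely this last measurable-selection step — extracting Borel maps $T_k$ and Borel weights $\alpha_k$ while the fiber cardinality varies with $x$; by contrast, once the duality picture and the differentiability assumption are in place, the bound $\#F_\lambda(x)\le m$ is an immediate consequence of the $m$-twist condition.
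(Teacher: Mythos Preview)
The paper does not prove Theorem \ref{MOMEN-CHAR-TH}; it is quoted from \cite[Theorem 1.3]{MOMENI-CHAR} and used as a black box, so there is no in-paper proof to compare against. That said, your outline is precisely the strategy carried out in \cite{MOMENI-CHAR}: pass to a $c$-concave potential $\phi$ via duality, use the hypothesis that $c$-concave functions are $\mu$-a.e.\ differentiable to obtain the first-order condition $D_xc(x,y)=D\phi(x)$ for every $y$ in the fiber $F_\lambda(x)$, invoke the $m$-twist condition to bound $\#F_\lambda(x)\le m$, and then split the disintegration $\lambda_x$ into at most $m$ Dirac masses with measurably chosen locations and weights.

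Your argument is sound. Two small remarks. First, to get the first-order condition you need $x'\mapsto c(x',y)$ differentiable at $x$; this is built into Definition \ref{DEF-2-TWIST}, which postulates that $D_xc$ exists, so your appeal to it is legitimate. Second, you correctly identify the measurable-selection step as the genuine technical content; the Lusin--Novikov route (Borel set with sections of bounded finite cardinality is a finite union of Borel graphs) is the cleanest way to finish, and is exactly how \cite{MOMENI-CHAR} handles it. The non-atomicity of $\mu$, which you do not use, enters in \cite{MOMENI-CHAR} for a refinement not needed at the level of the statement here.
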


\begin{example}[Gromov-Wasserstein problem]
    Let the spaces $X$ and $Y$ be subspaces of $\mathbb{R}^n$ equipped with the probability measures $\mu$ and $\nu$ where $\mu$ is non-atomic and consider the following cost function
    \begin{align*}
        c:X\times Y\to \mathbb{R},\quad c(x,y)=|x|^2|y|^2+\xi \langle Ax,y\rangle,
    \end{align*}
    where $\xi$ is non-zero parameter in $\mathbb{R}$ and $A$ is an $n\times n$ invertible matrix.
    Then the function $c$ satisfies $2$-twist condition. In fact, if $y\in Y$ is such that $D_xc(x_0,y)=D_xc(x_0,y_0)$ then
    \begin{align}\label{GRO-WAS-DEF-EQ}
        y =\frac{2}{\xi}(A^{\top})^{-1}x_0(|y_0|^2-|y |^2)+y_0.
    \end{align}
    Evidently, one solution to the equation \eqref{GRO-WAS-DEF-EQ} is $y_0$. Note that if $y$ is a solution to \eqref{GRO-WAS-DEF-EQ} then it lives on the straight line with the direction vector $(A^{\top})^{-1}x_0$ passing through the point $ y_0$. We shall show that this equation has at most one solution which is different from $y_0$. Let there be two distinct solutions $y_1$ and $y_2$ for \eqref{GRO-WAS-DEF-EQ} such that $y_i\neq y_0$, for $i=1,2$. Then since $y_i$'s are distinct points on the straight line then one of them can be written as a convex combination of two other points. Without loss of generality assume that
    \begin{align}\label{GRO-WAS-LIN}
        y_1=\theta y_0 +(1-\theta) y_2, \quad\text{for some}\; \theta\in (0,1).
    \end{align}
Substituting $y_1$ and $y_2$ from \eqref{GRO-WAS-DEF-EQ} into \eqref{GRO-WAS-LIN} and some cancellation yield that
\begin{align*}
    (\theta |y_0|+(1-\theta )|y_2|^2-|y_1|^2)x_0=0.
\end{align*}
If $x_0\neq 0$, then we obtain that
\begin{align*}
    |y_1|^2=\theta |y_0|+(1-\theta )|y_2|^2.
\end{align*}
On the other hand, from \eqref{GRO-WAS-LIN} and strict convexity of the Euclidean  norm, we obtain the following contradiction
\begin{align*}
    |y_1|^2=|\theta y_0 +(1-\theta) y_2|^2<\theta |y_0|+(1-\theta )|y_2|^2=|y_1|^2,
\end{align*}
which shows that $y_1=y_2$.\\
If $x_0=0$, then from \eqref{GRO-WAS-DEF-EQ} we have $y_1=y_2=y_0$. Hence, if $\mu$ is non-atomic then the optimal plan of 
    \begin{align}\label{GRO-WAS-OPT-PROB}
        \min\bigg\{\int_{X\times Y} -(|x|^2|y|^2+\xi \langle Ax,y\rangle)d\lambda\ : \ \lambda\in \Pi(\mu,\nu)\bigg\},
    \end{align}
    is concentrated on the graph of two measurable maps. The problem \eqref{GRO-WAS-OPT-PROB} appears in a special case of the Gromov-Wasserstein problem initiated in \cite{MEMOLI} which is indeed an extension of Gromov-Hausdorff distance to the probability measure space concept \cite{GROMOV}. In more detail, when optimization problem is
\begin{align*}
    \min\bigg\{\int_{X\times Y}\int_{X\times Y}\big||x-x'|^2-|y-y'|^2\big|^2\,d\lambda(x,y)\,d\lambda(x',y')\ :\ \lambda\in \Pi(\mu,\nu)\bigg\},
\end{align*}
then this problem reduces to \eqref{GRO-WAS-OPT-PROB} for which we have only considered the case where $A$ is invertible.
    \end{example}

\section{A reduction argument}\label{SectionReduction}
 For an integer $N\geq 3$, let $\mathcal{N}=\{1,\ldots,N\}$ and consider a family of Borel probability spaces $\{(X_k,\mathcal{B}_{X_k},\mu_k)\}_{k=1}^N$.   For a cost function $c:\prod_{k=1}^N X_k\to\mathbb{R}$ the Monge-Kantorovich problem is as follows, 
\begin{align*}\tag{MKP}\label{TAGMK}
    \inf\bigg\{\int_{\prod_{k=1}^N X_k} c\,d\lambda\ :\ \lambda\in\Pi(\mu_1,\ldots,\mu_N)\bigg\} .
\end{align*}
The dual problem, by means of convex optimization, to \eqref{TAGMK} is  formulated as follows,
\begin{align*}\tag{DMKP}\label{TAGDMK}
    \sup\bigg\{\sum_{k=1}^N\int_{X_k} \phi_k\,d\mu_k\ :\ \phi_k\in L_1(X_k,\mu_k),\ \sum_{k=1}^N\phi_k\leq c \bigg\}.
\end{align*}
For an ordered subset \[\mathcal{P}=\{i_1,\ldots,i_p\}\subsetneq\mathcal{N},\]  with $p\geq 2$ and $\mathcal{Q}=\mathcal{N}\backslash\mathcal{P}=\{j_1,\ldots,j_q\}$, we associate to  the cost function $c$  a new cost function $c_\mathcal{P}:\prod_{k=1}^p X_{i_k}\to\mathbb{R}$ as follows,
\begin{align}\label{GEN-CP-function}
    c_\mathcal{P}(x_{i_1},\ldots,x_{i_p})=\inf_{\prod_{k=1}^q X_{j_k}}\big\{c(x_1,x_2,\ldots,x_N)-\sum_{k=1}^q\phi_{j_k}(x_{j_k})\big\}.
\end{align}
where  $\{\phi_1,...,\phi_N\}$ is a solution of \ref{TAGDMK}.
Correspondingly, we introduce the \textit{reduced Monge-Kantorovich} problem below
\begin{align*}\tag{RMKP}\label{TAGRMK}
    \inf\bigg\{\int_{\prod_{k=1}^p X_{i_k}} c_\mathcal{P}\,d\tau\ :\  \tau\in\Pi(\mu_{i_1},\ldots\mu_{i_p})\bigg\}.
\end{align*}

The main motivation of this section is to investigate  the relation between optimal plans of \eqref{TAGMK} and \eqref{TAGRMK},  and to obtain a criterion for the uniqueness of  \eqref{TAGMK}. Here,  we always assume that  the infimum in \eqref{TAGMK} is attained.  To this end,  by demonstrating a connection between Kantorovich dual problems to \eqref{TAGMK} and \eqref{TAGRMK}, we provide a relation between their solutions in Proposition \ref{PROP-3}. Afterwards, in Theorems \ref{MAINTHEOREM} and \ref{MAINTHEOREMCP}, by choosing appropriate index subsets $\mathcal{P}$ in \eqref{GEN-CP-function}, we define a collection of cost functions $c_j$ for $j\in\mathcal{N}\backslash\{1\}$ and we consider their corresponding reduced Monge-Kantorovich problems. We prove uniqueness of the solution to \eqref{TAGMK} based on the uniqueness of the solution to these particular problems. In fact, in order to establish the desired uniqueness, the pivotal ideas are suitable choices of $\mathcal{P}$ and the uniqueness of the solution to \eqref{TAGRMK} associated with these $c_j$'s. To provide the latter condition, we benefit from the notion of $c$-extremality  described in Remark \ref{REM-NOTICE}. \\

In the first step, we obtain an optimal plan of \eqref{TAGRMK} from \eqref{TAGMK}. Indeed, the following proposition  shows  that the existence of the solutions to the original $N$-marginal Monge-Kantorovich problem guarantees the existence of the solutions to the reduced version of it, for all non-empty subsets $\mathcal{P}$ of $\mathcal{N}$.
\begin{proposition}\label{PROP-3}
Let $\mathcal{P}=\{i_1,\ldots,i_p\}$ be  any ordered subset of $\mathcal{N}$ and  $\pi_\mathcal{P}:\prod_{k=1}^N X_k\to \prod_{k=1}^p X_{i_k}$ be the corresponding projection map. If the family of Borel probability spaces $\{(X_k,\mathcal{B}_{X_k},\mu_k)\}_{k=1}^N$ is such that $\lambda\in\Pi(\mu_1,\ldots,\mu_N)$ is an optimal plan for \eqref{TAGMK}, and \eqref{TAGDMK} admits a solution, then the measure $\lambda_\mathcal{P} \in \Pi(\mu_{i_1},\ldots\mu_{i_p})$  defined by  $\lambda_\mathcal{P}=\pi_\mathcal{P}\#\lambda$ is an optimal plan for \eqref{TAGRMK},and the dual problem to \eqref{TAGRMK} admits a solution.
\end{proposition}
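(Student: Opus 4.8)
The plan is to compare the value of the reduced problem \eqref{TAGRMK} with the value of the original problem \eqref{TAGMK} by using the dual potentials $\{\phi_k\}_{k=1}^N$, and then to verify that $\lambda_\mathcal{P}=\pi_\mathcal{P}\#\lambda$ is both admissible and achieves this value. First I would record the trivial observation that $\lambda_\mathcal{P}\in\Pi(\mu_{i_1},\ldots,\mu_{i_p})$: for any $A_{i_\ell}\in\mathcal{B}_{X_{i_\ell}}$, the $\pi_\mathcal{P}$-preimage of a cylinder over $A_{i_\ell}$ is the cylinder over $A_{i_\ell}$ in the full product, so the marginal condition for $\lambda$ transfers directly to $\lambda_\mathcal{P}$. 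Next I would produce a lower bound for the reduced problem: from the definition \eqref{GEN-CP-function} of $c_\mathcal{P}$ we have, for every point, $c_\mathcal{P}(x_{i_1},\ldots,x_{i_p})\geq \sum_{k=1}^p\phi_{i_k}(x_{i_k})$, because $c(x_1,\ldots,x_N)-\sum_{k=1}^q\phi_{j_k}(x_{j_k})\geq \sum_{k=1}^p\phi_{i_k}(x_{i_k})$ is exactly the dual feasibility inequality \eqref{SPLT-SET3} rearranged. Hence for any $\tau\in\Pi(\mu_{i_1},\ldots,\mu_{i_p})$ we get $\int c_\mathcal{P}\,d\tau\geq \sum_{k=1}^p\int_{X_{i_k}}\phi_{i_k}\,d\mu_{i_k}$; that is, the value of \eqref{TAGRMK} is at least $\sum_{k=1}^p\int\phi_{i_k}\,d\mu_{i_k}$.

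Then I would show $\lambda_\mathcal{P}$ attains this bound. The key point is the identity $\int_{\prod_{k=1}^p X_{i_k}} c_\mathcal{P}\,d\lambda_\mathcal{P}=\sum_{k=1}^p\int_{X_{i_k}}\phi_{i_k}\,d\mu_{i_k}$. To see this, change variables back: $\int c_\mathcal{P}\,d\lambda_\mathcal{P}=\int_{\prod_{k=1}^N X_k} c_\mathcal{P}(x_{i_1},\ldots,x_{i_p})\,d\lambda$. On $\mathrm{Spt}(\lambda)\subseteq\mathcal{S}$ we have the equality $c(x_1,\ldots,x_N)=\sum_{k=1}^N\phi_k(x_k)$ from \eqref{SPLT-SET2}, so $c(x)-\sum_{k=1}^q\phi_{j_k}(x_{j_k})=\sum_{k=1}^p\phi_{i_k}(x_{i_k})$ there; combined with the pointwise lower bound $c(x)-\sum_{k=1}^q\phi_{j_k}(x_{j_k})\geq\sum_{k=1}^p\phi_{i_k}(x_{i_k})$ valid everywhere, this forces the infimum defining $c_\mathcal{P}(x_{i_1},\ldots,x_{i_p})$ to equal $\sum_{k=1}^p\phi_{i_k}(x_{i_k})$ for $\lambda$-a.e. point (the infimum is $\geq$ this quantity always, and $\leq$ it because it is attained in the limit along the $\mathcal{Q}$-slice through the given point of $\mathcal{S}$). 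Integrating, and using that $\lambda$ has marginals $\mu_{i_k}$, gives $\int c_\mathcal{P}\,d\lambda_\mathcal{P}=\sum_{k=1}^p\int\phi_{i_k}\,d\mu_{i_k}$, which matches the lower bound; therefore $\lambda_\mathcal{P}$ is optimal for \eqref{TAGRMK}.

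The main obstacle I anticipate is the measurability and integrability bookkeeping around $c_\mathcal{P}$: one must check that $c_\mathcal{P}$ is a legitimate measurable (ideally lower semi-continuous, hence $\lambda_\mathcal{P}$-integrable from below) function, that the infimum in \eqref{GEN-CP-function} is not $-\infty$ (which again follows from \eqref{SPLT-SET3}), and that $c_\mathcal{P}\in L_1$ against the relevant marginals so that the integrals above genuinely make sense rather than being $+\infty=+\infty$. The lower-bound-$a_k$ hypothesis on $c$ and the integrability of the $\phi_k$ are exactly what is needed here, so this is a matter of care rather than a genuine difficulty. A secondary subtlety is justifying pointwise that the infimum defining $c_\mathcal{P}$ is actually achieved (or at least approached) at $\lambda$-a.e.\ point of $\mathcal{S}$; this is where \eqref{SPLT-SET2} is used, and it is the one place where the argument genuinely relies on $\lambda$ being an optimal plan rather than an arbitrary element of $\Pi(\mu_1,\ldots,\mu_N)$.
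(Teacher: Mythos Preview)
Your proposal is correct and follows essentially the same approach as the paper: both use the sandwich $\sum_{k=1}^{p}\phi_{i_k}(x_{i_k})\leq c_\mathcal{P}(x_{i_1},\ldots,x_{i_p})\leq c(x_1,\ldots,x_N)-\sum_{k=1}^{q}\phi_{j_k}(x_{j_k})$ coming from \eqref{SPLT-SET3}, integrate against $\lambda$, and invoke \eqref{SPLT-SET1}. The only cosmetic difference is that the paper integrates the full sandwich directly (so the outer integrals collapse by \eqref{SPLT-SET1} without ever needing the pointwise equality $c_\mathcal{P}=\sum_k\phi_{i_k}$ on $\mathrm{Spt}(\lambda)$), whereas you argue that equality first via \eqref{SPLT-SET2}; both routes are fine and yield the same conclusion that $(\phi_{i_k})_{k=1}^p$ is dual-optimal and $\lambda_\mathcal{P}$ is primal-optimal for \eqref{TAGRMK}.
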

    \begin{proof}
We first  show that $\{\phi_{i_k}\}_{k=1}^{p}$ solves the dual problem to \eqref{TAGRMK}, where $\{\phi_k\}_{k=1}^N$ is the solution to \eqref{TAGDMK}. Let $\mathcal{Q}=\mathcal{N}\backslash\mathcal{P}=\{j_1,\ldots,j_q\}$. From \eqref{SPLT-SET3}, we have
\begin{align*}
\sum_{k=1}^{p}\phi_{i_k}(x_{i_k})\leq c(x_1,\ldots,x_N)-\sum_{k=1}^{q}\phi_{j_k}(x_{j_k}), \qquad \forall (x_1,\ldots,x_N) \in \prod_{k=1}^N X_k,
\end{align*}
and consequently, for all $(x_1,\ldots,x_N) \in \prod_{k=1}^N X_k$, we have
\begin{align}\label{INEQ}
\sum_{k=1}^{p}\phi_{i_k}(x_{i_k})\leq c_\mathcal{P}(x_{i_1},\ldots,x_{i_p})\leq  c(x_1,\ldots,x_N)-\sum_{k=1}^{q}\phi_{j_k}(x_{j_k}).
\end{align}
Integrating all sides of the inequalities in \eqref{INEQ} with respect to $d\lambda$, and using marginality of $\mu_k$'s and $\lambda_\mathcal{P}$, it is obtained that
\begin{align*}
\sum_{k=1}^{p}\int_{X_{i_k}}\phi_{i_k}(x_{i_k})\,d\mu_{i_k}\leq \int_{\prod_{k=1}^{p}X_{i_k}}c_\mathcal{P}(x_{i_1},\ldots,x_{i_p})\,d\lambda_{\mathcal{P}}\leq \int_{\prod_{k=1}^N X_k} c(x_1,\ldots,x_N)\,d\lambda-\sum_{k=1}^q\int_{X_{j_k}}\phi_{j_k}(x_{j_k})\,d\mu_{j_k}.
\end{align*}
Now, by applying \eqref{SPLT-SET1}, we conclude the desired equality as follows
\begin{align*}
\sum_{k=1}^{p}\int_{X_{i_k}}\phi_{i_k}(x_{i_k})\,d\mu_{i_k}=\int_{\prod_{k=1}^{p}X_{i_k}}c_\mathcal{P}(x_{i_1},\ldots,x_{i_p})\,d\lambda_{\mathcal{P}}.
\end{align*}
This shows that $\lambda_\mathcal{P}$ is a solution to \eqref{TAGRMK} as well as $\{\phi_{i_k}\}_{k=1}^p$ is a solution to the dual problem to \eqref{TAGRMK}.
    \end{proof}
\begin{remark}
We would like to remark that an implicit version of the above result for specific cost functions have already appeared in  \cite{PASSVARGAS2} and \cite{CARLIER} where the authors prove the existence of Monge type solutions. Indeed, in \cite{CARLIER} this method is applied for strictly monotone functions of order $2$ and in \cite{PASSVARGAS2} it is done for the cyclic cost function. Moreover, in \cite{KAUSAMO}, the authors investigated $c$-splitting set in $\prod_{k=1}^NX_k$ and its project on $X_i\times X_j$ and showed its properties when the cost function $c$ can be written as the sum of cost functions $c_{ij}:X_i\times X_j\to\mathbb{R}$.
\end{remark}
In the next attempt, we derive uniqueness of the optimal plan of \eqref{TAGMK} from special form of \eqref{TAGRMK}. To this end, let us consider \eqref{TAGRMK} with $\mathcal{P}_j=\{1,\ldots,j\}$, for $j\in\mathcal{N}\backslash\{1\}$. Consequently, the optimization problem \eqref{TAGRMK} turns into
\begin{align}\label{GEN-TRI}
    \inf_{\gamma\in\Pi(\mu_1,\ldots,\mu_j)}\int c_j(x_1,\ldots,x_j)d\gamma, \quad j\in\mathcal{N}\backslash\{1\} .
\end{align}
where from \eqref{GEN-CP-function}, we have
\begin{align}\label{GEN-CJ-function}
    c_j(x_1,\ldots,x_j)=\inf_{\prod_{k=j+1}^N X_k}\bigg\{c(x_1,\ldots,x_N)-\sum_{k=j+1}^N\phi_k(x_k)\bigg\},  \quad j\in\mathcal{N}\backslash\{1\}.
\end{align}
Note that the function $c$ can be viewed as a function on $Z_N\times X_N$ where $Z_N=\prod_{k=1}^{N-1}X_k$. Moreover, we may consider the function $c_j$ as a  function on $Z_j\times X_j$ where  we assume $Z_j=\prod_{k=1}^{j-1}X_k$. For each $j\in\mathcal{N}\backslash\{1,N\}$, we  denote by \[\lambda_j:=\pi_{\mathcal{P}_j}\#\lambda,\] the restriction  of $\lambda\in \Pi(\mu_1,\ldots,\mu_N)$ on $\prod_{k=1}^{j}X_k$ for which we have that $\lambda_j \in \Pi(\mu_1,\ldots,\mu_j)$. Note that,  we may also  consider $\lambda_j$ as a measure on $Z_j \times X_j$ which belongs to  $\Pi(\lambda_{j-1},\mu_{j})$  in which we have set $\lambda_1:=\mu_1$.\\

\noindent
To prove the next theorem, we use the terminology given in Remark \ref{REM-NOTICE}-\eqref{REM-NOTICE2}. It should be clarified that when we say a measure $\lambda$ is $c$-extreme it means that it is $c(z_N,x_N)$-extreme, i.e. the function $c$ is considered as a function on $Z_N\times X_N$. This implies that the associated set-valued maps $F_{\lambda}$ and $f_{\lambda,c}$ are maps from $Z_N$ to $2^{X_N}$. Similarly, we may say a measure (e.g. the marginal $\lambda_j$) is $c_j$-extreme and we mean it is $c_j(z_{j},x_{j})$-extreme. Same holds true for associated set-valued maps. Note that for $\lambda_{2}\in \Pi(\mu_1,\mu_2)$, being $c_{2}$-extreme is clear by the Definition \ref{DEF-CEXT}. We are now ready to state  our main result in this section.

\begin{theorem}\label{MAINTHEOREM}
Let  $\{(X_k,\mathcal{B}_{X_k},\mu_k)\}_{k=1}^N$ be a family of Borel probability spaces such that $X_k$'s are Polish spaces, $c:\prod_{k=1}^NX_k\to[0,\infty]$, is lower semi-continuous, $\otimes_{k=1}^N\mu_k$-a.e. finite, and there exists a finite transport plan. If each optimal plan of the problem \eqref{TAGMK} is $c$-extreme on $Z_N\times X_N$ and is such that its restrictions $\lambda_j=\pi_{\mathcal{P}_j}\#\lambda$ on $Z_j\times X_j$ are $c_j$-extreme, for $j\in\mathcal{N}\backslash\{1,N\}$, then \eqref{TAGMK} admits a unique solution.
\end{theorem}

The following proposition shows the link between extremality of a measure in $\Pi(\mu_1,\ldots,\mu_N)$ and its restriction $\lambda_j$ on $Z_j\times X_j$, for $j\in\mathcal{N}\backslash\{1,N\}$ which is the pivotal factor in the proof of Theorem \ref{MAINTHEOREM}.

\begin{proposition}\label{PROP-EXTREME}
    Let $\lambda\in \Pi(\mu_1,\ldots,\mu_N)$ and $\lambda_j$ be its restriction on $Z_j\times X_j$, for $j\in\mathcal{N}\backslash\{1,N\}$. If $\lambda$ is an extreme point of $\Pi(\lambda_{N-1},\mu_N)$ and $\lambda_j$ is an extreme point of $\Pi(\lambda_{j-1},\mu_j)$ for $j\in\mathcal{N}\backslash\{1,N\}$, then $\lambda$ is an extreme point of $\Pi(\mu_1,\ldots,\mu_N)$.
    \begin{proof}
        We shall show that $\lambda$ is an extreme point of $\Pi(\mu_1,\ldots,\mu_N)$. Let it be otherwise, and let there exist two distinct measures $\alpha$ and $\beta$ in $\Pi(\mu_1,\ldots,\mu_N)$ such that
\begin{align}\label{QQQ}
    \lambda=\frac{1}{2}(\alpha+\beta),\qquad \lambda\neq\alpha ,\beta .
\end{align}
Let $\alpha_j=\pi_{\mathcal{P}_j}\#\alpha$ and $\beta_j=\pi_{\mathcal{P}_j}\#\beta$ be restrictions of $\alpha$ and $\beta$ on $Z_j\times X_j$. Then clearly we have
\begin{align*}
    \lambda_j=\frac{1}{2}(\alpha_j+\beta_j),\quad j\in\mathcal{N}\backslash\{1,N\}.
\end{align*}
Since, $\lambda_{2}$ is an extreme point of $\Pi(\mu_1,\mu_2)$, then it is concluded that
\begin{align*}
    \alpha_{2}=\beta_{2}=\lambda_{2},\quad\text{and}\quad \alpha_{3},\;\beta_{3}\in\Pi(\lambda_{2},\mu_3).
\end{align*}
Again, since 
$\lambda_{3}$ is an extreme point of $\Pi(\lambda_{2},\mu_3)$, then we obtain that
\begin{align*}
    \alpha_{3}=\beta_{3}=\lambda_{3},\quad\text{and}\quad \alpha_{4},\;\beta_{4}\in\Pi(\lambda_{3},\mu_4).
\end{align*}
By repeating the same process, we reach to the point that
\begin{align*}
    \alpha_{N-1}=\beta_{N-1}=\lambda_{N-1},\quad\text{and}\quad \alpha,\;\beta\in\Pi(\lambda_{N-1},\mu_N).
\end{align*}
Finally, the extremality of $\lambda$ in $\Pi(\lambda_{N-1},\mu_N)$ and \eqref{QQQ} lead to a contradiction. Therefore, $\lambda$ is an extreme point of $\Pi(\mu_1,\ldots,\mu_N)$.
    \end{proof}
\end{proposition}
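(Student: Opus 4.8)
The plan is to argue by contradiction and bootstrap upward through the marginals one level at a time. Suppose $\lambda$ is not an extreme point of $\Pi(\mu_1,\ldots,\mu_N)$; then there exist $\alpha,\beta\in\Pi(\mu_1,\ldots,\mu_N)$ with $\alpha\neq\beta$ and $\lambda=\frac{1}{2}(\alpha+\beta)$. For $j\in\mathcal{N}\backslash\{1,N\}$ put $\alpha_j=\pi_{\mathcal{P}_j}\#\alpha$ and $\beta_j=\pi_{\mathcal{P}_j}\#\beta$. Since push-forward commutes with finite convex combinations, $\lambda_j=\frac{1}{2}(\alpha_j+\beta_j)$ for each such $j$, and since the projections are compatible (that is, $\pi_{\mathcal{P}_{j-1}}$ factors through $\pi_{\mathcal{P}_j}$), the measures $\alpha_j,\beta_j$, viewed on $Z_j\times X_j$, have $Z_j$-marginal $\alpha_{j-1}$ and $\beta_{j-1}$ respectively and $X_j$-marginal $\mu_j$, the latter because $\alpha,\beta\in\Pi(\mu_1,\ldots,\mu_N)$. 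With the convention $\lambda_1=\mu_1$ the same description holds at level $j=2$, and at the top level $\lambda=\frac{1}{2}(\alpha+\beta)$ on $Z_N\times X_N$ with $Z_N$-marginals $\alpha_{N-1},\beta_{N-1}$ and $X_N$-marginal $\mu_N$.

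Now I would run the induction. For the base case, $\alpha_2,\beta_2\in\Pi(\mu_1,\mu_2)$ and $\lambda_2=\frac{1}{2}(\alpha_2+\beta_2)$; since $\lambda_2$ is an extreme point of $\Pi(\mu_1,\mu_2)$ by hypothesis, this forces $\alpha_2=\beta_2=\lambda_2$. For the inductive step, assume $\alpha_{j-1}=\beta_{j-1}=\lambda_{j-1}$ for some $j\in\{3,\ldots,N\}$. Then the $Z_j$-marginal of each of $\alpha_j,\beta_j$ equals $\lambda_{j-1}$ and the $X_j$-marginal equals $\mu_j$, so $\alpha_j,\beta_j\in\Pi(\lambda_{j-1},\mu_j)$; here for $j=N$ one reads $\lambda_N:=\lambda$, $\alpha_N:=\alpha$, $\beta_N:=\beta$ and the set as $\Pi(\lambda_{N-1},\mu_N)$. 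Since $\lambda_j=\frac{1}{2}(\alpha_j+\beta_j)$ and $\lambda_j$ is an extreme point of $\Pi(\lambda_{j-1},\mu_j)$---for $j<N$ by the hypothesis on the restrictions and for $j=N$ by the hypothesis on $\lambda$ itself---extremality yields $\alpha_j=\beta_j=\lambda_j$. Carrying this up to $j=N$ gives $\alpha=\lambda=\beta$, contradicting $\alpha\neq\beta$, so $\lambda$ is an extreme point of $\Pi(\mu_1,\ldots,\mu_N)$.

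The one point that needs care is the bookkeeping of the marginal structure at each stage: one must verify that, once the lower-level restrictions have been forced to agree with those of $\lambda$, the next-level restrictions $\alpha_j,\beta_j$ genuinely lie in $\Pi(\lambda_{j-1},\mu_j)$, so that precisely the relevant extremality hypothesis becomes applicable. This rests on two elementary facts---linearity of push-forward under convex combinations and the consistency of the iterated projections $\pi_{\mathcal{P}_j}$ (restricting $\alpha_j$ back to $Z_j$ returns $\alpha_{j-1}$)---both of which are already implicit in the setup of Section \ref{SectionReduction}. There is no analytic difficulty; the entire content is in organizing the chain of reductions from the bottom level $j=2$ upward in the correct order.
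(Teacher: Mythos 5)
Your proposal is correct and is essentially the same argument as the paper's: a contradiction via $\lambda=\frac{1}{2}(\alpha+\beta)$, followed by pushing the decomposition down to the two-marginal level and bootstrapping upward, using at each stage the extremality of $\lambda_j$ in $\Pi(\lambda_{j-1},\mu_j)$ to force $\alpha_j=\beta_j=\lambda_j$ until $\alpha=\beta=\lambda$ at the top. The only difference is that you make explicit the bookkeeping (linearity of push-forward and compatibility of the projections $\pi_{\mathcal{P}_j}$) that the paper treats as immediate.
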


\noindent
\textbf{Proof of Theorem \ref{MAINTHEOREM}.}
The proof is actually based on the case $N=2$. Indeed, since an optimal plan $\lambda$ of \eqref{TAGMK} is $c$-extreme then by Theorem \ref{THc-EXT}-\eqref{THc-EXT1} (and the Remark \ref{REM-NOTICE}-\eqref{REM-NOTICE3} afterwards) it is an extreme point of $\Pi(\lambda_{N-1},\mu_N)$. We note that by Remark \ref{REM-NOTICE}-\eqref{REM-NOTICE1} we are allowed to apply Theorem \ref{THc-EXT}. Moreover, by Proposition \ref{PROP-3}, we know that $\lambda_j$ is an optimal plan for \eqref{GEN-TRI}, which by assumption is $c_j$-extreme. Hence, by similar argument it is an extreme point of $\Pi(\lambda_{j-1},\mu_{j})$. Now, by applying Proposition \ref{PROP-EXTREME} we obtain that the optimal plan $\lambda$ is an extreme point of $\Pi(\mu_1,\ldots,\mu_N)$ which yields the desired uniqueness. This is because of the fact that if there is another optimal plan $\lambda'$ then it is an extreme ponit as well. Moreover, any convex combination of $\lambda$ and $\lambda'$, say $\lambda_t=t\lambda+(1-t)\lambda'$, for $t\in (0,1)$, is an optimal plan and it has to be an extreme point which is impossible.
\hfill \qedsymbol{}

There is an analogous version of the Theorem \ref{MAINTHEOREM}, where one can obtain the same result by replacing $c$-extremality with $(c,P)$-extremality. More precisely assume that for each $k\in\mathcal{N}$, the family $P_k=\{Y^{(k)}_l\}_{l=1}^{L_k}$ be a Borel ordered partition of $X_k$ for $k\in\mathcal{N}\backslash\{1\}$. The following theorem states that one can have the uniqueness of the optimal plan of \eqref{TAGMK}. The proof is exactly as the proof of the Theorem \ref{MAINTHEOREM}.

\begin{theorem}\label{MAINTHEOREMCP}
Let  $\{(X_k,\mathcal{B}_{X_k},\mu_k)\}_{k=1}^N$ be a family of Borel probability spaces such that $X_k$'s are Polish spaces, $c:\prod_{k=1}^NX_k\to[0,\infty]$, is lower semi-continuous, $\otimes_{k=1}^N\mu_k$-a.e. finite, and there exists a finite transport plan. Let $P_k=\{Y^{(k)}_l\}_{l=1}^{L_k}$ be an ordered measurable partition of $X_k$, for $k\in\mathcal{N}\backslash\{1\}$. Then 
if optimal plans of \eqref{TAGMK} are $(c,P_N)$-extreme and are such that $\lambda_j=\pi_{\mathcal{P}_j}\#\lambda$ is $(c_j,P_j)$-extreme, for $j\in\mathcal{N}\backslash\{1,N\}$, then the problem \eqref{TAGMK} admits a unique solution.
\end{theorem}

\begin{remark}\label{REM-MAINTH}
It is noteworthy to mention that a combination of $c_j$-extremality and $(c_j,P_j)$-extremality also yields the same results obtained in Theorems \ref{MAINTHEOREM} and \ref{MAINTHEOREMCP}. This is due to the fact that we benefited from the consequence of $c_j$-extremal and $(c_j,P_j)$-extremal measure that is being an extreme point.
\end{remark}

\section{The reduction argument and the gluing principle}\label{SectionDisintegration}
In this section,  we shall construct optimal plans of the $N$-marginal Monge-Kantorovich problem from the associated $k$-marginal problems when $k<N$.
One of the main ingredients in this section is the disintegration theorem which together with the gluing lemma and the reduction argument  allow us to construct optimal plans of the $N$-marginal Monge-Kantorovich problem from the reduced  $k$-marginal ones for  $k<N$. 

\begin{theorem}[Disintegration Theorem]\label{DISINTTH}
    Let $X$ and $Y$ be complete separable metric spaces equipped with Borel probability measures $\mu$ and $\nu$, respectively. For any $\lambda\in\Pi(\mu,\nu)$, there exists a family $(\nu^x)_{x\in X}\subseteq P(Y)$ such that
\begin{enumerate}
    \item the map $x\mapsto \nu^x(B)$ is a Borel measurable map on $X$, for all $B\in \mathcal{B}_Y$,
    \item $\lambda=\nu^x\otimes\mu$, meaning that for all $A\times B\in \mathcal{B}_X\times\mathcal{B}_Y$, we have $\lambda(A\times B)=\int_A\nu^x(B)d\mu(x)$,
    \item the family $(\nu^x)_{x\in X}$, is uniquely determined for $\mu$-a.e. $x\in X$.
\end{enumerate}

\end{theorem}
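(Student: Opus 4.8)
The plan is to build the family $(\nu^x)$ by a Radon--Nikodym argument on a countable generating algebra and then extend it to a genuine measure. First I would use second countability of $Y$ to fix a countable algebra $\mathcal{A}\subseteq\mathcal{B}_Y$ that generates $\mathcal{B}_Y$ and is closed under finite unions, intersections and complements. For each $B\in\mathcal{A}$ the set function $A\mapsto\lambda(A\times B)$ is a finite Borel measure on $X$ dominated by $\mu$, since $\lambda(A\times B)\le\lambda(A\times Y)=\mu(A)$; hence it has a density $h_B:=d\lambda(\cdot\times B)/d\mu$, which may be chosen with values in $[0,1]$. The candidate is then defined on the algebra by $\nu^x(B):=h_B(x)$.

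Next I would check that, off a single $\mu$-null set, $B\mapsto h_B(x)$ is a finitely additive, $[0,1]$-valued set function on $\mathcal{A}$ with $h_Y(x)=1$. The point is that for disjoint $B,B'\in\mathcal{A}$ the identity $h_{B\cup B'}=h_B+h_{B'}$ holds $\mu$-a.e., because the corresponding identity holds at the level of the measures $\lambda(\cdot\times B)$ and Radon--Nikodym derivatives are $\mu$-a.e.\ unique; likewise $h_\emptyset=0$, $h_Y=1$, and monotonicity hold $\mu$-a.e. Since $\mathcal{A}$ is countable, there are only countably many such relations, so they all hold simultaneously on a set $X_0\subseteq X$ of full $\mu$-measure, and for $x\in X_0$ we obtain a bona fide finitely additive probability set function on the algebra $\mathcal{A}$.

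The hard part will be upgrading this finitely additive set function to a countably additive Borel probability measure $\nu^x$ on $Y$, for this is exactly where completeness and separability of $Y$ must be used — finite additivity alone does not suffice. I would transport the problem to the model case $Y\subseteq[0,1]$ Borel (every Polish space is Borel isomorphic to a Borel subset of $[0,1]$), choosing $\mathcal{A}$ generated by intervals with rational endpoints; the presence of a compact class (closed subintervals) approximating $\mathcal{A}$ from within forces continuity at $\emptyset$, so $h_\bullet(x)$ is a premeasure and Carath\'{e}odory's extension theorem yields a unique Borel probability measure $\nu^x$ on $Y$ extending it. An equivalent route is a martingale argument along an increasing sequence of finite Borel partitions of $Y$ generating $\mathcal{B}_Y$: the conditional masses form a bounded martingale in $x$, and the martingale convergence theorem produces $\nu^x$ for $\mu$-a.e.\ $x$. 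For $x\notin X_0$ I simply set $\nu^x:=\nu$.

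Finally I would verify (1)--(3) by standard monotone-class arguments. Measurability of $x\mapsto\nu^x(B)$ is clear for $B\in\mathcal{A}$ (it equals $h_B$), and the collection of Borel $B$ for which it holds is a $\lambda$-system containing the $\pi$-system $\mathcal{A}$, hence equals $\mathcal{B}_Y$ by Dynkin's lemma; this is (1). For (2), the identity $\lambda(A\times B)=\int_A\nu^x(B)\,d\mu(x)$ holds by construction for $A\in\mathcal{B}_X$ and $B\in\mathcal{A}$, and for fixed $A$ both sides are finite measures in $B$ agreeing on the generating $\pi$-system $\mathcal{A}$, hence on all of $\mathcal{B}_Y$; agreement on measurable rectangles is precisely $\lambda=\nu^x\otimes\mu$. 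For (3), if $(\tilde\nu^x)$ is another such family, then for each $B\in\mathcal{A}$ the maps $x\mapsto\nu^x(B)$ and $x\mapsto\tilde\nu^x(B)$ are both versions of $d\lambda(\cdot\times B)/d\mu$, hence $\mu$-a.e.\ equal; intersecting these countably many full-measure sets and using that $\mathcal{A}$ is a generating $\pi$-system gives $\nu^x=\tilde\nu^x$ for $\mu$-a.e.\ $x$.
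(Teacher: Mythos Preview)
The paper does not actually prove this theorem: immediately after the statement it simply writes ``Proof of existence can be found in \cite{DELLACHERIEMEYER} and the uniqueness in \cite{AMBROSIOCAFFARELLIBRENIERBUTTAZZOVILLANI}.'' So there is no argument in the paper to compare against; the result is quoted as a standard tool.

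Your proposal is a correct outline of the classical proof of existence of regular conditional distributions on Polish spaces (the Radon--Nikodym construction on a countable generating algebra, followed by an upgrade to a genuine probability measure via the Polish structure, and a Dynkin/monotone-class verification of (1)--(3)). This is exactly the argument one finds in the references the paper cites. One small point worth tightening in your ``hard part'': the compact-class justification, as you phrased it, implicitly needs that for $\mu$-a.e.\ $x$ the finitely additive $h_\bullet(x)$ is continuous at $\emptyset$, but there are uncountably many decreasing sequences $B_n\downarrow\emptyset$ in $\mathcal{A}$, so one cannot conclude this by intersecting null sets directly. The clean way---which your alternative ``distribution function on $[0,1]$'' or martingale route in fact implements---is to reduce to the countably many monotonicity relations $q\le q'\Rightarrow h_{[0,q)}(x)\le h_{[0,q')}(x)$ for rational $q,q'$, build the right-continuous CDF $F_x$ from these, and let $\nu^x$ be the associated Lebesgue--Stieltjes measure. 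With that adjustment your sketch is complete.
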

\noindent
    Proof of existence can be found \cite{DELLACHERIEMEYER} and the uniqueness in \cite{AMBROSIOCAFFARELLIBRENIERBUTTAZZOVILLANI}.

\begin{remark}\label{REM1-APP} Here we are listing some straightforward observations from the latter theorem that will be used in the sequel. 
\begin{enumerate}

    \item\label{REM1-APP-2}
    It is worthy to notice that if there exists a measurable map $T:X\to Y$ such that $\lambda=(\text{id}\times T)\#\mu$ then $\lambda$ has decomposition as $\lambda=\delta_{T(x)}\otimes\mu$ where $\delta_{T(x)}$ stands for the dirac measure supported by the point $T(x).$
\item\label{REM1-APP-3}
Using Theorem \ref{DISINTTH}, one can easily generalize the result to higher dimensions. Assume that $X$, $Y$, and $Z$ are complete separable metric spaces equipped with Borel probability measures $\mu$, $\nu$, and $\gamma$, respectively. 
For  the projection map $\pi_{XY}:X\times Y\times Z\to X\times Y$ and  a measure $\lambda\in\Pi(\mu,\nu,\gamma)$, we denote by $\lambda^{XY}$ the measure $\pi_{XY}\#\lambda$, i.e. the restriction  of $\lambda$ on $X\times Y$. We do the same in the case of $X\times Z$, as well.
Therefore,  for $\lambda\in\Pi(\mu,\nu,\gamma)$, we have
\begin{align}\label{3DIM-DIS-FORM}
    \lambda=\lambda^x\otimes\mu,\quad \text{with}\quad\lambda^x\in\Pi(\nu^x,\gamma^x)\subseteq P(Y\times Z),\quad \text{for}\;\mu\text{-a.e.}\; x\in X,
\end{align}
where $\lambda^{XY}=\nu^x\otimes\mu\in\Pi(\mu,\nu)$ and $\lambda^{XZ}=\gamma^x\otimes\mu \in\Pi(\mu,\gamma)$. Moreover, by applying the disintegration theorem to the measures $\lambda^x$, we have that $\lambda^x=\gamma^{xy}\otimes\nu^x$ with $(\gamma^{xy})_{y\in Y}\subseteq P(Z)$, and consequently, $\lambda=\gamma^{xy}\otimes\nu^x\otimes\mu$, i.e.,
\begin{align*}
    \lambda(A\times B\times C)=\int_A\int_B \gamma^{xy}(C)d\nu^x(y)d\mu(x),\quad \forall A\times B\times C\in \mathcal{B}_X\times \mathcal{B}_Y\times \mathcal{B}_Z.
\end{align*}
\end{enumerate}
\end{remark}

Here is the standard gluing lemma.  We refer to   \cite[Lemma 5.3.2]{AMBROSIO} for a detailed proof. 
\begin{lemma}[Gluing Lemma]\label{GLUING}
   If $\lambda_1=\nu^x\otimes \mu\in \Pi(\mu,\nu)\subseteq P(X\times Y) $ and $\lambda_2=\gamma^x\otimes \mu\in \Pi(\mu,\gamma)\subseteq P(X\times Z) $, then the gluing measure $\lambda =(\nu^x\times \gamma^x)\otimes \mu$ belongs to $\Pi(\mu,\nu, \gamma)\subseteq P(X\times Y\times Z)$, and 
   \begin{align*}
       \pi_{XY}\#\lambda=\lambda^{XY}=\lambda_1,\quad\text{and}\quad \pi_{YZ}\#\lambda=\lambda^{YZ}=\lambda_2.
   \end{align*}
   Moreover, if either $\lambda_1$ or $\lambda_2$ are induced by a transport map then the gluing measure $\lambda$ is unique. 
\end{lemma}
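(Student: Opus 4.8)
The plan is to build $\lambda$ fibrewise over $X$, identify its marginals, and then exploit the rigidity of Dirac disintegrations for the uniqueness clause. First I would check that $\lambda:=(\nu^x\times\gamma^x)\otimes\mu$ makes sense as a Borel probability measure on $X\times Y\times Z$. For $\mu$-a.e.\ $x$ the product $\nu^x\times\gamma^x$ is a Borel probability measure on $Y\times Z$ acting on rectangles by $(\nu^x\times\gamma^x)(B\times C)=\nu^x(B)\,\gamma^x(C)$. Since $x\mapsto\nu^x(B)$ and $x\mapsto\gamma^x(C)$ are measurable by Theorem~\ref{DISINTTH}, so is their product, and a standard monotone class argument promotes this to measurability of $x\mapsto(\nu^x\times\gamma^x)(E)$ for every $E\in\mathcal{B}_{Y\times Z}$. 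Consequently $\lambda(A\times E):=\int_A(\nu^x\times\gamma^x)(E)\,d\mu(x)$ is a well-defined, countably additive set function on the rectangles $A\times E$ of $X\times(Y\times Z)$, and it extends uniquely to the announced Borel probability measure $\lambda=(\nu^x\times\gamma^x)\otimes\mu$.

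Next I would read off the marginals. Taking $E=Y\times Z$ gives $\lambda(A\times Y\times Z)=\int_A\nu^x(Y)\gamma^x(Z)\,d\mu=\mu(A)$, so $\pi_X\#\lambda=\mu$. Taking $A=X$ and $E=B\times Z$ gives $\lambda(X\times B\times Z)=\int_X\nu^x(B)\,d\mu=\lambda_1(X\times B)=\nu(B)$, and symmetrically $\lambda(X\times Y\times C)=\gamma(C)$. Hence $\lambda\in\Pi(\mu,\nu,\gamma)$; the same computations show $\pi_{XY}\#\lambda=\lambda_1$ and $\pi_{XZ}\#\lambda=\lambda_2$, so $\lambda$ genuinely glues $\lambda_1$ and $\lambda_2$.

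For the uniqueness clause, assume without loss of generality that $\lambda_1=(\text{id}\times T)\#\mu$, equivalently $\nu^x=\delta_{T(x)}$ for $\mu$-a.e.\ $x$ by Remark~\ref{REM1-APP}-\eqref{REM1-APP-2}. Let $\tilde\lambda\in\Pi(\mu,\nu,\gamma)$ satisfy $\pi_{XY}\#\tilde\lambda=\lambda_1$ and $\pi_{XZ}\#\tilde\lambda=\lambda_2$, and disintegrate it as $\tilde\lambda=\tilde\lambda^x\otimes\mu$ with $\tilde\lambda^x\in P(Y\times Z)$. The first condition forces the $Y$-marginal of $\tilde\lambda^x$ to equal $\delta_{T(x)}$ for $\mu$-a.e.\ $x$; but any $\rho\in P(Y\times Z)$ whose first marginal is $\delta_{y_0}$ satisfies $\rho(\{y_0\}^c\times Z)=0$, hence is concentrated on $\{y_0\}\times Z$ and therefore equals $\delta_{y_0}\times\rho_Z$, where $\rho_Z$ is its $Z$-marginal. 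The second condition identifies that $Z$-marginal with $\gamma^x$, so $\tilde\lambda^x=\delta_{T(x)}\times\gamma^x=\nu^x\times\gamma^x$ for $\mu$-a.e.\ $x$, and by the $\mu$-a.e.\ uniqueness of the disintegration in Theorem~\ref{DISINTTH} we conclude $\tilde\lambda=\lambda$.

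The only mildly technical point is the passage from rectangles to all Borel sets of $Y\times Z$ in the first step, namely the joint measurability of $x\mapsto(\nu^x\times\gamma^x)(E)$ together with countable additivity of the extended set function, which is handled by the usual monotone class / Dynkin argument; the genuinely substantive idea is the one isolated in the third paragraph, that a Dirac first marginal pins down the conditional law fibrewise and thereby removes all freedom in the gluing, and it is elementary once stated.
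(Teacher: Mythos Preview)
Your proof is correct and follows the standard route. The paper does not actually supply a proof of this lemma; it simply cites \cite{AMBROSIO} for the details, so there is nothing to compare against beyond noting that your argument is precisely the expected one: build the measure fibrewise, check marginals on rectangles, and for uniqueness use that $\Pi(\delta_{T(x)},\gamma^x)=\{\delta_{T(x)}\times\gamma^x\}$, which is exactly the mechanism the paper invokes in Remark~\ref{REM1-APP} and again in the proof of Theorem~\ref{GEN-BIGTH}.
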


\subsection{The reduction argument and the disintegration theorem}
In this section  we evince an application of the reduction method by means of the disintegration theorem. Assume that
\begin{align*}
 \mathcal{P}\subsetneq\mathcal{N},\quad \mathcal{Q}\subsetneq \mathcal{N}\backslash\mathcal{P},\quad\text{and}\quad   \mathcal{R}=\mathcal{Q}\cup \mathcal{P}.
\end{align*}
Moreover, suppose that $\{\mathcal{Q}_j\}_{j\in\mathbb{J}}$ is a partition of $\mathcal{N}\backslash\mathcal{R}$, for an indexing set $\mathbb{J}$, meaning that
\begin{align*}
\mathcal{Q}_j\neq\emptyset,\quad   \mathcal{Q}_i\cap\mathcal{Q}_j=\emptyset,\quad\text{for}\; i\neq j\in\mathbb{J},\quad \text{and}\quad \cup_{j\in\mathbb{J}}\mathcal{Q}_j=\mathcal{N}\backslash\mathcal{R}.
\end{align*}
\noindent
Before going further, let us clarify about the notations. When we take union of the ordered subsets of $\mathcal{N}$, we consider it as an ordered subset as well. 
For ease of notations, for an ordered subset $\mathcal{P}=\{i_1,\ldots,i_p\}\subseteq\mathcal{N}$, the subset $\Pi(\mu_{i_1},\ldots,\mu_{i_p})$ will be denoted by $\Pi((\mu_k)_{k\in\mathcal{P}})$ and the space $\prod_{k=1}^p X_{i_k}$ will be typified by $\prod_{k\in\mathcal{P}}X_k$. An element in the latter space will be represented by $(x_k)_{k\in\mathcal{P}}$ and it is considered as an ordered $p$-tuple, where $p$ is the number of elements of the index set $\mathcal{P}$. Therefore, action of a function $f:\prod_{k\in\mathcal{P}}X_k\to\mathbb{R}$ on this element will be symbolized as $f((x_k)_{k\in\mathcal{P}})$.\\

\noindent
For each $j \in \mathbb{J}$ let $\mathcal{R}_j=\mathcal{Q}_j\cup \mathcal{P}$ and consider the following optimization problems
\begin{align}
& \inf\bigg\{\int c_{\mathcal{R}}\,d\tau_{\mathcal{R}}\ :\ \tau_{\mathcal{R}}\in\Pi((\mu_k)_{k\in\mathcal{R}})\bigg\}\label{GEN-RED-STAR},\\
   & \inf\bigg\{\int c_{\mathcal{R}_j}\,d\tau_{\mathcal{R}_j}\ :\ \tau_{\mathcal{R}_j}\in\Pi((\mu_k)_{k\in\mathcal{R}_j})\bigg\}\label{GEN-RED-SOME},\quad j\in\mathbb{J},
\end{align}
where $c_{\mathcal{R}}:\prod_{k\in\mathcal{R}}X_k\to\mathbb{R}$ and $c_{\mathcal{R}_j}:\prod_{k\in\mathcal{R}_j}X_k\to\mathbb{R}$ are defined as follows
\begin{align*}
& c_{\mathcal{R}}((x_k)_{k\in\mathcal{R}})=\inf_{\prod_{k\in\mathcal{N}\backslash\mathcal{R}}X_k}\left\{c((x_k)_{k\in\mathcal{N}})-\sum_{k\in\mathcal{N}\backslash\mathcal{R}}\phi_k(x_k)\right\}.\\
 &    c_{\mathcal{R}_j}((x_k)_{k\in\mathcal{R}_j})=\inf_{\prod_{k\in\mathcal{N}\backslash\mathcal{R}_j}X_k}\left\{c((x_k)_{k\in\mathcal{N}})-\sum_{k\in\mathcal{N}\backslash\mathcal{R}_j}\phi_k(x_k)\right\},\quad j\in\mathbb{J}.
\end{align*}
Our goal is to recover optimal plans of \eqref{TAGMK} from optimal plans of \eqref{GEN-RED-STAR} and \eqref{GEN-RED-SOME}. To do so, we may consider a measure $\tau_\mathcal{R}\in\Pi((\mu_k)_{k\in\mathcal{R}})$ as an element of $\Pi(\tau_\mathcal{P},\tau_\mathcal{Q})$ where
\begin{align*}
\tau_\mathcal{P}=\pi_\mathcal{P}\#\tau_\mathcal{R}\quad \text{and}\quad \tau_\mathcal{Q}=\pi_\mathcal{Q}\#\tau_\mathcal{R}.
\end{align*}
Here
\begin{align*}
    \pi_{\mathcal{P}} :\prod_{k\in\mathcal{R}}X_k\to \prod_{k\in\mathcal{P}}X_k,\quad\text{and}\quad  \pi_{\mathcal{Q}} :\prod_{k\in\mathcal{R}}X_k\to \prod_{k\in\mathcal{Q}}X_k,
\end{align*}
are the canonical projection maps. By this point of view, a measure $\tau_\mathcal{R}\in\Pi((\mu_k)_{k\in\mathcal{R}})$ admits the disintegration with respect to the marginal $\tau_\mathcal{P}$ as follows
\begin{align}\label{DIS-TAU-R}
    \tau_\mathcal{R}=\tau^{(x_k)_{k\in\mathcal{P}}}_{\mathcal{Q}}\otimes\tau_{\mathcal{P}}\quad \text{with}\quad\tau^{(x_k)_{k\in\mathcal{P}}}_{\mathcal{Q}}\in P(\prod_{k\in\mathcal{Q}}X_k),\quad\text{for}\ \tau_\mathcal{P}\text{-a.e.}\; (x_k)_{k\in\mathcal{P}}\in \prod_{k\in\mathcal{P}}X_k .
\end{align}
In particular, if $\pi_\mathcal{R}:\prod_{k\in\mathcal{N}}X_k\to \prod_{k\in\mathcal{R}}X_k$ denotes the projection map, then for a measure $\lambda\in \Pi((\mu_k)_{k\in\mathcal{N}})$ the measure $\pi_\mathcal{R}\#\lambda$ can be viewed as an element of $\Pi(\lambda_\mathcal{P},\lambda_\mathcal{Q})$ and  admits a disintegration of the form \eqref{DIS-TAU-R}.

\begin{theorem}\label{GEN-BIGTH}
Assume that for each $j\in\mathbb{J}$, any optimal plan of \eqref{GEN-RED-SOME} is concentrated on the graph of a  measurable map from $\prod_{k\in\mathcal{P}}X_k$  to $ \prod_{k\in\mathcal{Q}_j}X_k$.
Let $\lambda$ be an optimal plan of \eqref{TAGMK} and $\lambda_{\mathcal{R}}=\pi_\mathcal{R}\#\lambda$. Assume that $\lambda_\mathcal{P}=\pi_\mathcal{P}\#\lambda_\mathcal{R}$ and $\lambda_\mathcal{Q}=\pi_\mathcal{Q}\#\lambda_\mathcal{R}$ and that $\lambda_\mathcal{R}$ has the disintegration with respect to the marginal $\lambda_\mathcal{P}$ as follows
\begin{align}\label{DIS-LAMB-R}
    \lambda_\mathcal{R}=\lambda^{(x_k)_{k\in\mathcal{P}}}_{\mathcal{Q}}\otimes\lambda_{\mathcal{P}}\quad \text{with}\quad\lambda^{(x_k)_{k\in\mathcal{P}}}_{\mathcal{Q}}\in P(\prod_{k\in\mathcal{Q}}X_k),\quad\text{for}\ \lambda_\mathcal{P}\text{-a.e.}\; (x_k)_{k\in\mathcal{P}}\in \prod_{k\in\mathcal{P}}X_k.
\end{align}
Then the optimal plan $\lambda$ is of the form
\begin{align}\label{FORMOFOPT}
    \lambda=\big(\prod_{j\in\mathbb{J}}\delta_{T_{\mathcal{Q}_j}((x_k)_{k\in\mathcal{P}})}\times \lambda_{\mathcal{Q}}^{(x_k)_{k\in\mathcal{P}}}\big)\otimes\lambda_{\mathcal{P}}.
\end{align}
for some measurable map $ T_{\mathcal{Q}_j}:\prod_{k\in\mathcal{P}}X_k\to \prod_{k\in\mathcal{Q}_j}X_k  $ and $ j\in\mathbb{J}$.
Moreover, if for each $j\in\mathbb{J}$ the problems \eqref{GEN-RED-STAR} and \eqref{GEN-RED-SOME} have a unique solution, then the optimal plan for \eqref{TAGMK} is also unique.
\begin{proof}
Let $\lambda\in \Pi((\mu_k)_{k\in\mathcal{N}})$ be an optimal plan of \eqref{TAGMK} and $\pi_{\mathcal{R}_j}:\prod_{k\in\mathcal{N}}X_k\to\prod_{k\in\mathcal{R}_j}X_k$ be the canonical projection maps, for $j\in\mathbb{J}$. By Proposition \ref{PROP-3}, each optimal plan $\lambda $ of \eqref{TAGMK} induces optimal plans for \eqref{GEN-RED-STAR} and \eqref{GEN-RED-SOME}. Therefore,  for $j\in\mathbb{J}$, the measures $\lambda_{\mathcal{R}_j}=\pi_{\mathcal{R}_j}\#\lambda$ and $\lambda_{\mathcal{R}}$ are optimal plans of \eqref{GEN-RED-SOME} and \eqref{GEN-RED-STAR} respectively.  On the other hand, through the assumption, the optimal plans of \eqref{GEN-RED-SOME} are concentrated on the graph of some measurable maps from $\prod_{k\in\mathcal{P}}X_k$ to $\prod_{k\in\mathcal{Q}_j}X_k$. Therefore, for $j\in\mathbb{J}$, the measure  $\lambda_{\mathcal{R}_j}$ has the following disintegration
    \begin{align*}
      \lambda_{\mathcal{R}_j}=\delta_{T_{\mathcal{Q}_j}((x_k)_{k\in\mathcal{P}})}\otimes \lambda_{\mathcal{P}}, \quad j\in\mathbb{J}.
    \end{align*}
    for some measurable maps $T_{\mathcal{Q}_j}:\prod_{k\in\mathcal{P}}X_k\to \prod_{k\in\mathcal{Q}_j}X_k$. Consequently, by Lemma \ref{GLUING} the only measure that has marginals $\lambda_{\mathcal{R}_j}$ and $\lambda_{\mathcal{R}}$ has to be $\lambda $ which is given by \eqref{FORMOFOPT}. In detail, to apply this lemma directly, let $\overline{\mathcal{Q}}=\cup_{j\in\mathbb{J}}\mathcal{Q}_j$. Note that by hiding the rearrangement of the product space, we have
    \begin{align*}
  \prod_{k\in\overline{\mathcal{Q}}}X_k= \prod_{j\in\mathbb{J}}\prod_{k\in \mathcal{Q}_j }X_k.
     \end{align*}  
 Let us define the measurable map
    \begin{align*}
        T:\prod_{k\in\mathcal{P}}X_k\to\prod_{k\in\overline{\mathcal{Q}}}X_k,\quad T((x_k)_{k\in\mathcal{P}})=(T_{\mathcal{Q}_j}((x_k)_{k\in\mathcal{P}}))_{j\in\mathbb{J}}.
    \end{align*}
Let 
$\pi_{\overline{\mathcal{Q}}}:\prod_{k\in\mathcal{N}}X_k\to \prod_{k\in\overline{\mathcal{Q}}}X_k $ be the corresponding projection map and put $\lambda_{\overline{\mathcal{Q}}}=\pi_{\overline{\mathcal{Q}}}\#\lambda$.
We consider the measure $\lambda$ as a measure in $\Pi(\lambda_\mathcal{P},\lambda_\mathcal{Q},\lambda_{\overline{\mathcal{Q}}})$ while
\begin{align*}
\lambda_\mathcal{R}\in\Pi(\lambda_\mathcal{P},\lambda_\mathcal{Q}),\quad \text{and}\quad\ (\text{id}\times T)\#\lambda_\mathcal{P}\in\Pi(\lambda_\mathcal{P},\lambda_{\overline{\mathcal{Q}}}).
\end{align*}
Clearly we have $\lambda_\mathcal{R}\in\Pi(\lambda_\mathcal{P},\lambda_\mathcal{Q})$. To see $(\text{id}\times T)\#\lambda_\mathcal{P}\in\Pi(\lambda_\mathcal{P},\lambda_{\overline{\mathcal{Q}}})$ we note that in this case, the disintegrations of $(\text{id}\times T)\#\lambda_\mathcal{P}$ and $\pi_{\mathcal{P}\cup\overline{\mathcal{Q}}}\#\lambda$ with respect to $\lambda_\mathcal{P}$ are the same
\begin{align*}
(\text{id}\times T)\#\lambda_\mathcal{P}=\delta_{T((x_k)_{k\in\mathcal{P}})}\otimes\lambda_{\mathcal{P}}=\prod_{j\in\mathbb{J}}\delta_{T_{\mathcal{Q}_j}((x_k)_{k\in\mathcal{P}})}\otimes\lambda_{\mathcal{P}}=\pi_{\mathcal{P}\cup\overline{\mathcal{Q}}}\#\lambda .
\end{align*}
Then by choosing 
    \begin{align*}
    &(X,\mu)=(\prod_{k\in\mathcal{P}}X_k,\lambda_\mathcal{P}),\quad (Y,\nu)=(\prod_{k\in\mathcal{Q}}X_k,\lambda_\mathcal{Q}),\quad (Z,\gamma)=(\prod_{k\in\overline{\mathcal{Q}}}X_k,\lambda_{\overline{\mathcal{Q}}}),
    \end{align*}
    and
    \begin{align*}
    &\lambda_1=\lambda_\mathcal{R},\quad \lambda_2=(\text{id}\times T)\#\lambda_\mathcal{P},
    \end{align*}
    in Lemma \ref{GLUING}, we have the result. This is due to the fact that via this lemma, the only measure that has marginals $\lambda_1$ and $\lambda_2$ has to be $\lambda$. Consequently, $\lambda$ has to have the disintegration as \eqref{FORMOFOPT}.\\  
For the uniqueness part, let $\tau_{\mathcal{R}}$ and $\tau_{\mathcal{R}_j}$ be the unique solutions to \eqref{GEN-RED-STAR} and \eqref{GEN-RED-SOME}, for $j\in\mathbb{J}$, respectively.
Then each $\lambda\in\Pi((\mu_k)_{k\in\mathcal{N}})$ that is an optimal plan for \eqref{TAGMK} has marginals on $\prod_{k\in\mathcal{R}}X_k$ and $\prod_{k\in\mathcal{R}_j}X_k$ equal to $\tau_{\mathcal{R}}$ and $\tau_{\mathcal{R}_j}$, for $j\in\mathbb{J}$. Hence, thanks to the Lemma \ref{GLUING} and the first part of the proof there is only one measure that has marginals $\tau_\mathcal{R}$ and $\tau_{\mathcal{R}_j}$ which implies the uniqueness of the optimal measures of \eqref{TAGMK}.

\end{proof}
\end{theorem}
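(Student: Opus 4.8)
The plan is to reduce the $N$-marginal problem to the structure of a double disintegration and then invoke the gluing lemma twice. First I would invoke Proposition \ref{PROP-3}: since $\lambda$ is an optimal plan of \eqref{TAGMK}, each of the push-forwards $\lambda_\mathcal{R}=\pi_\mathcal{R}\#\lambda$ and $\lambda_{\mathcal{R}_j}=\pi_{\mathcal{R}_j}\#\lambda$ (for $j\in\mathbb{J}$) is an optimal plan for the corresponding reduced problems \eqref{GEN-RED-STAR} and \eqref{GEN-RED-SOME}. This is the step that connects the hypotheses of the theorem to the concrete structure we want to exploit, and it requires only that $\mathcal{R}$ and each $\mathcal{R}_j$ be proper subsets of $\mathcal{N}$, which holds by construction since $\mathcal{Q}\subsetneq\mathcal{N}\setminus\mathcal{P}$ forces $\mathcal{R}\subsetneq\mathcal{N}$.

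Next I would use the hypothesis that every optimal plan of \eqref{GEN-RED-SOME} is concentrated on the graph of a measurable map $\prod_{k\in\mathcal{P}}X_k\to\prod_{k\in\mathcal{Q}_j}X_k$. Applying this to $\lambda_{\mathcal{R}_j}$ and viewing it as an element of $\Pi(\lambda_\mathcal{P},\lambda_{\mathcal{Q}_j})$, Remark \ref{REM1-APP}-\eqref{REM1-APP-2} gives the disintegration $\lambda_{\mathcal{R}_j}=\delta_{T_{\mathcal{Q}_j}((x_k)_{k\in\mathcal{P}})}\otimes\lambda_\mathcal{P}$ for some measurable $T_{\mathcal{Q}_j}$. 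Collecting these maps into a single map $T:(x_k)_{k\in\mathcal{P}}\mapsto (T_{\mathcal{Q}_j}((x_k)_{k\in\mathcal{P}}))_{j\in\mathbb{J}}$ valued in $\prod_{k\in\overline{\mathcal{Q}}}X_k$ with $\overline{\mathcal{Q}}=\cup_{j\in\mathbb{J}}\mathcal{Q}_j$, I would then set up the gluing: with base $(X,\mu)=(\prod_{k\in\mathcal{P}}X_k,\lambda_\mathcal{P})$, fibers $(Y,\nu)=(\prod_{k\in\mathcal{Q}}X_k,\lambda_\mathcal{Q})$ and $(Z,\gamma)=(\prod_{k\in\overline{\mathcal{Q}}}X_k,\lambda_{\overline{\mathcal{Q}}})$, and the two plans $\lambda_1=\lambda_\mathcal{R}\in\Pi(\lambda_\mathcal{P},\lambda_\mathcal{Q})$ and $\lambda_2=(\mathrm{id}\times T)\#\lambda_\mathcal{P}\in\Pi(\lambda_\mathcal{P},\lambda_{\overline{\mathcal{Q}}})$. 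Since $\lambda_2$ is induced by a map, Lemma \ref{GLUING} says the gluing measure on $X\times Y\times Z$ is \emph{unique}; since $\lambda$ itself (reorganized) is a competitor with the right two marginals, it must equal the gluing, which gives exactly the formula \eqref{FORMOFOPT}.

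For the uniqueness assertion, suppose \eqref{GEN-RED-STAR} and \eqref{GEN-RED-SOME} each have a unique solution $\tau_\mathcal{R}$, $\tau_{\mathcal{R}_j}$. Then any optimal $\lambda$ of \eqref{TAGMK} has $\pi_\mathcal{R}\#\lambda=\tau_\mathcal{R}$ and $\pi_{\mathcal{R}_j}\#\lambda=\tau_{\mathcal{R}_j}$ by Proposition \ref{PROP-3}; in particular the map $T$ above is determined $\lambda_\mathcal{P}$-a.e. by the $\tau_{\mathcal{R}_j}$'s, and $\lambda_\mathcal{R}=\tau_\mathcal{R}$ is fixed, so the gluing formula \eqref{FORMOFOPT} pins down $\lambda$ completely — hence \eqref{TAGMK} has a unique optimal plan.

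The main obstacle I anticipate is purely bookkeeping rather than conceptual: one has to be careful that the various product spaces are identified consistently (the reordering of coordinates when writing $\prod_{k\in\overline{\mathcal{Q}}}X_k=\prod_{j\in\mathbb{J}}\prod_{k\in\mathcal{Q}_j}X_k$, and the identification of a measure in $\Pi((\mu_k)_{k\in\mathcal{N}})$ with one in $\Pi(\lambda_\mathcal{P},\lambda_\mathcal{Q},\lambda_{\overline{\mathcal{Q}}})$), and that the disintegration of $\lambda$ with respect to $\lambda_\mathcal{P}$ really does factor through the product of the fiber maps $\delta_{T_{\mathcal{Q}_j}}$ and the fiber measures $\lambda_\mathcal{Q}^{(x_k)_{k\in\mathcal{P}}}$ — this is where the uniqueness clause in the gluing lemma (valid because one factor is a graph) does the real work, ensuring no extra correlation among the $\mathcal{Q}_j$-blocks can appear.
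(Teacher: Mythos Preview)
Your proposal is correct and follows essentially the same route as the paper: invoke Proposition \ref{PROP-3} to make each $\lambda_{\mathcal{R}_j}$ optimal, read off the graph disintegration $\delta_{T_{\mathcal{Q}_j}}\otimes\lambda_\mathcal{P}$, bundle the $T_{\mathcal{Q}_j}$ into a single map $T$ into $\prod_{k\in\overline{\mathcal{Q}}}X_k$, and then apply the Gluing Lemma with exactly the same choices of $(X,\mu),(Y,\nu),(Z,\gamma)$ and $\lambda_1=\lambda_\mathcal{R}$, $\lambda_2=(\mathrm{id}\times T)\#\lambda_\mathcal{P}$, using the uniqueness clause because $\lambda_2$ is graph-induced. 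Your remark about the coordinate-reordering bookkeeping and the factorization of the fiber measure is precisely the point the paper signals when it writes ``hiding the rearrangement of the product space''; neither your argument nor the paper's spells this out further, so you are at the same level of detail.
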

Theorem \ref{GEN-BIGTH} is stated in a general form. One of the interesting special cases is when we deal with two-marginal reduced problem. Precisely, in Theorem \ref{GEN-BIGTH}, let 
\begin{align*}
&\mathcal{P}=\{1\},\quad \mathcal{Q}=\{j_0\},\quad \text{for some}\quad  j_0\in\mathcal{N}\backslash\{1\},\\
&\mathbb{J}=\mathcal{N}\backslash\{1,j_0\}\quad\text{and}\quad \mathcal{Q}_j=\{j\},\ \text{for}\  j\in \mathbb{J}.
\end{align*}
Consequently, the problems \eqref{GEN-RED-STAR} and \eqref{GEN-RED-SOME} can be expressed as follows
\begin{align}\label{TWOMARGREDUC}
    \inf\int_{\tau_{1j}\in \Pi(\mu_1,\mu_j)} c_{1j}(x_1,x_j)\,d\tau_{1j}(x_1,x_j),\quad j\in\mathcal{N}\backslash\{1\},
\end{align}
where the function  $c_{1j}:X_1\times X_j\to\mathbb{R}$ is defined by
\begin{align*}
&c_{1j}(x_1,x_j)=\inf_{\prod_{k=2,\neq j}^N X_k}\big\{c(x_1,\ldots,c_N)-\sum_{k=2,\neq j}^N\phi_k(x_k)\big\},\quad j\in\mathcal{N}\backslash\{1\}.
\end{align*}
In the following theorem for $j\in\mathcal{N}\backslash\{1\}$ we denote by $\pi_{1j}$ the projection map from $\prod_{k=1}^NX_k$ onto $X_1\times X_j$. Recall that the restriction of an element $\lambda\in\Pi(\mu_1,\ldots,\mu_N)$ on $X_1\times X_j$ that is $\pi_{1j}\#\lambda\in\Pi(\mu_1,\mu_j)$ has the decomposition with respect to the first marginal as follows
 \begin{align*}
     \pi_{1j}\#\lambda=\mu_j^{x_1}\otimes\mu_1,\quad \mu_j^{x_1}\in P(X_j),\quad j\in\mathcal{N}\backslash\{1\}.
 \end{align*}
Now, we state a more applicable version of Theorem \ref{GEN-BIGTH}.

\begin{theorem}\label{BIGTH} 
For each $j\in\mathcal{N}\backslash\{1,j_0 \}$, let the optimal plan of \eqref{TWOMARGREDUC} be induced by the unique transport map $T_1: X_1\to X_j$. Assume that $\lambda$ is an optimal plan of \eqref{TAGMK} and is such that $\pi_{1,j_0 }\#\lambda=\mu_{j_0}^{x_1}\otimes \mu_1$. Then $\lambda$ is of the form
\begin{align*}
   \big(\prod_{j=2}^{j_0-1}\delta_{T_j(x_1)}\times \mu^{x_1}_{j_0}\times\prod_{j=j_0+1}^{N}\delta_{T_j(x_1)}\big) \otimes \mu_1.
\end{align*}
If for  $j=j_0$, the problem \eqref{TWOMARGREDUC} admits a unique solution, then \eqref{TAGMK} has a unique solution as well.

\end{theorem}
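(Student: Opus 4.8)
The plan is to read off Theorem~\ref{BIGTH} as the two–marginal instance of Theorem~\ref{GEN-BIGTH}. First I would record the dictionary: take $\mathcal{P}=\{1\}$, $\mathcal{Q}=\{j_0\}$, $\mathcal{R}=\{1,j_0\}$, $\mathbb{J}=\mathcal{N}\backslash\{1,j_0\}$ and $\mathcal{Q}_j=\{j\}$ (hence $\mathcal{R}_j=\{1,j\}$) for $j\in\mathbb{J}$. With these choices the reduced cost $c_{\mathcal{R}_j}$ is precisely $c_{1j}$, the auxiliary problems \eqref{GEN-RED-STAR} and \eqref{GEN-RED-SOME} are exactly the two–marginal problems \eqref{TWOMARGREDUC} for $j=j_0$ and for $j\in\mathbb{J}$ respectively, and since $\lambda_{\mathcal{P}}=\mu_1$ (as $\mathcal{P}=\{1\}$) the disintegration \eqref{DIS-LAMB-R} is nothing but the disintegration of $\lambda$ along its first marginal.

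Next I would check that the standing hypothesis of Theorem~\ref{GEN-BIGTH} holds here: for every $j\in\mathbb{J}$ each optimal plan of \eqref{GEN-RED-SOME} is concentrated on the graph of a measurable map $\prod_{k\in\mathcal{P}}X_k\to\prod_{k\in\mathcal{Q}_j}X_k$, which in the present notation is exactly the assumption that the optimal plan of \eqref{TWOMARGREDUC} is induced by the (unique) map $T_j:X_1\to X_j$. By Proposition~\ref{PROP-3}, an optimal plan $\lambda$ of \eqref{TAGMK} pushes forward to optimal plans of all of these reduced problems; hence $\pi_{1j}\#\lambda=(\mathrm{id}\times T_j)\#\mu_1=\delta_{T_j(x_1)}\otimes\mu_1$ for $j\in\mathbb{J}$, while $\pi_{1,j_0}\#\lambda=\mu_{j_0}^{x_1}\otimes\mu_1$ is the given hypothesis on $\lambda$. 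Plugging these into the conclusion \eqref{FORMOFOPT} of Theorem~\ref{GEN-BIGTH} and unwinding the identification $\prod_{k\in\overline{\mathcal{Q}}}X_k=\prod_{j\in\mathbb{J}}X_j$ — that is, reordering the factors so that the $X_{j_0}$–factor sits in its correct slot inside the ordered product $\prod_{k=2}^N X_k$ — gives exactly the asserted form of $\lambda$.

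For the uniqueness clause I would additionally use the hypothesis that \eqref{TWOMARGREDUC} has a unique solution for $j=j_0$. Together with the assumed uniqueness of the $T_j$ for $j\in\mathbb{J}$, this says that all of the reduced problems \eqref{GEN-RED-STAR}, \eqref{GEN-RED-SOME} have a single solution, so the uniqueness part of Theorem~\ref{GEN-BIGTH} applies verbatim: any optimal $\lambda$ of \eqref{TAGMK} then has its restrictions $\pi_{1,j_0}\#\lambda$ and $\pi_{1j}\#\lambda$ prescribed, and because the latter are induced by transport maps, Lemma~\ref{GLUING} forces the glued measure to be unique, hence $\lambda$ is unique.

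I expect no genuinely new obstacle for Theorem~\ref{BIGTH}: all the real content — that the disintegration kernels of $\lambda$ over $x_1$ are forced to be $\prod_{j\in\mathbb{J}}\delta_{T_j(x_1)}\times\mu_{j_0}^{x_1}$, using both that a probability measure with a Dirac marginal splits off that coordinate and the uniqueness–of–gluing assertion in Lemma~\ref{GLUING} — already lives inside the proof of Theorem~\ref{GEN-BIGTH}. The main care needed is purely notational: being consistent about the ordering of the factors of the product space when translating \eqref{FORMOFOPT} into the displayed expression, and reading the single map in the hypothesis as $T_j$ (depending on $j$) rather than a fixed $T_1$.
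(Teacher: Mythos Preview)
Your proposal is correct and matches the paper's approach exactly: the paper presents Theorem~\ref{BIGTH} as ``a more applicable version of Theorem~\ref{GEN-BIGTH}'' and gives no separate proof, the specialization $\mathcal{P}=\{1\}$, $\mathcal{Q}=\{j_0\}$, $\mathbb{J}=\mathcal{N}\backslash\{1,j_0\}$, $\mathcal{Q}_j=\{j\}$ having already been spelled out in the paragraph preceding the statement. Your dictionary and the invocation of Proposition~\ref{PROP-3} and Lemma~\ref{GLUING} are precisely what is needed to unwind \eqref{FORMOFOPT} into the displayed form.
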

\begin{remark}
    Theorem \ref{BIGTH} determines the necessary and sufficient conditions for the existence of the solutions for the multi-marginal Monge problem. In fact, \eqref{TAGM} has a unique solution if and only if the two-marginal problem \eqref{TWOMARGREDUC} admits a unique solution induced by a transport map, for all $j\in\mathcal{N}\backslash\{1\}$.
\end{remark}
\subsection{Optimal plans concentrating on two maps}
Consider three probability spaces $(X,\mathcal{B}_X,\mu)$, $(Y,\mathcal{B}_Y,\nu)$ and $(Z,\mathcal{B}_Z,\gamma)$. We aim at finding the general form of the solutions to the optimization problem associated with the cost function  $c:X\times Y\times Z\to\mathbb{R}$ via connecting the optimal plans of optimization problem obtained by the  reduction argument on the spaces $X\times Y$ and $X\times Z$. Consider the following optimization problem
\begin{align}
     & \inf_{\lambda\in\Pi(\mu,\nu,\gamma)}\int c(x,y,z)\,d\lambda.\label{OPT-CHAR-THREE}
\end{align}
Assume that the dual problem to \eqref{OPT-CHAR-THREE} has the following solution
\begin{align*}
    (\phi_1,\phi_2,\phi_3)\in L_1(X,\mu)\times L_1(Y,\nu)\times L_1(Z,\gamma).
\end{align*}
Put
\begin{align}
    &c_1:X\times Y\to\mathbb{R},\quad c_1(x,y)=\inf_{z\in Z}\big\{c(x,y,z)-\phi_3(z)\big\},\label{COSTC1}\\
    &c_2:X\times Z\to\mathbb{R},\quad c_2(x,z)=\inf_{y\in Y}\big\{c(x,y,z)-\phi_2(y)\big\},\label{COSTC2}
\end{align}
The corresponding two-marginal Monge-Kantorovich problems associated with cost functions \eqref{COSTC1} and \eqref{COSTC2} are as follows
\begin{align}
      & \inf_{\lambda_1\in\Pi(\mu,\nu)}\int c_1(x,y )\,d\lambda_1,\label{OPT-CHAR-TWO-ONE}\\
         & \inf_{\lambda_2\in\Pi(\mu,\gamma)}\int c_2(x ,z)\,d\lambda_2.\label{OPT-CHAR-TWO-TWO}
\end{align}
Let $\pi_{XY}$ and $\pi_{XZ}$ denote the natural projections from $X\times Y\times Z$ onto $ X\times Y$ and $ X\times Z$, respectively. In the following theorem, we establish the general form of optimal plans of \eqref{OPT-CHAR-THREE} when the problems \eqref{OPT-CHAR-TWO-ONE} and \eqref{OPT-CHAR-TWO-TWO} have optimal plans concentrating on multiple measurable maps.
\begin{theorem}\label{THEOREM-2-TWIST-GEN}
The following assertions hold;
\begin{enumerate}
    \item
    Let  $\lambda$  be an optimal plan  \eqref{OPT-CHAR-THREE} such that the restrictions of $\lambda$  on $X\times Y$ and $X\times Z$ are of the form 
    \begin{align}
    &\pi_{XY}\#\lambda=\big(\alpha(x)\delta_{T_1(x)}+(1-\alpha(x))\delta_{T_2(x)}\big)\otimes\mu,\quad \alpha:X\to [0,1],\label{DIS-NU}\\
&\pi_{XZ}\#\lambda=\big(\beta(x)\delta_{G_1(x)}+(1-\beta(x))\delta_{G_2(x)}\big)\otimes\mu,\quad \beta:X\to [0,1],\label{DIS-GAMMA}
\end{align} 
  where  $ T_1, T_2:X\to Y$,   $G_1, G_2:X\to Z$ and the scalar functions $\alpha,\beta$ are measurable. Then $\lambda $ is of the form   
\begin{align}\label{DIS-LAM}
    \lambda=\big(\sum_{i,j=1}^2L_{ij}(x)\delta_{T_i(x)}\times\delta_{G_j(x)}\big)\otimes\mu,
\end{align}
where the maps $L_{ij}:X\to [0,1]$ are measurable maps. Moreover, the $4$-tuples $(L_{11},L_{12},L_{21},L_{22})$ solves the following system of equations
\begin{align}\label{TAGA}
\begin{array}{ll}
  L_{11}(x)+L_{12}(x)=\alpha(x),&\\
  L_{21}(x)+L_{22}(x)=1-\alpha(x),&\\
  L_{11}(x)+L_{21}(x)=\beta(x),&\\
  L_{12}(x)+L_{22}(x)=1-\beta(x),&
\end{array}
\end{align}    
for $\mu$-a.e. $x\in X$.
\item
If problems \eqref{OPT-CHAR-TWO-ONE} and \eqref{OPT-CHAR-TWO-TWO} admit unique solutions of the form \eqref{DIS-NU} and \eqref{DIS-GAMMA} then the problem \eqref{OPT-CHAR-THREE} has two optimal plans which are extreme points of $\Pi(\mu,\nu,\gamma)$, and any other plan is a convex combination of these two extreme points.  Particularly, the problem \eqref{OPT-CHAR-THREE} has a unique optimal plan if and only if it is associated with the $4$-tuples $(L^*_{11},L^*_{12},L^*_{21},L^*_{22})$ which satisfies the following relations
\begin{align}\label{TAGB}
\begin{array}{ll}
    &L^*_{11}(x)=\alpha(x)\beta(x),\\
    &L^*_{12}(x)=\alpha(x)(1-\beta(x)),\\
    &L^*_{21}(x)=(1-\alpha(x))\beta(x),\\
    &L^*_{22}(x)=(1-\alpha(x))(1-\beta(x)).
    \end{array}
\end{align}
\end{enumerate}
\begin{proof}
\begin{enumerate}
    \item
    First, we note that by Proposition \ref{PROP-3}, for an optimal plan $\lambda$ of \eqref{OPT-CHAR-THREE}, the marginals $\pi_{XY}\#\lambda$ and $\pi_{XZ}\#\lambda$ are optimal plans of \eqref{OPT-CHAR-TWO-ONE} and \eqref{OPT-CHAR-TWO-TWO}, respectively. Thus, by assumption, they are of the forms \eqref{DIS-NU} and \eqref{DIS-GAMMA}. Generally, the marginals of $\lambda\in\Pi(\mu,\nu,\gamma)$ with on $X\times Y$ and $X\times Z$ have the following disintegrations
    \begin{align*}
     \pi_{XY}\#\lambda=\nu^x\otimes\mu\quad \text{and} \quad \pi_{XZ}\#\lambda=\gamma^x\otimes\mu,\quad \nu^x\in P(Y),\ \gamma^x\in P(Z),
    \end{align*}
   Moreover, we have 
\begin{align*}
\lambda=\lambda^x\otimes\mu,\quad \text{where}\quad \lambda^x\in\Pi(\nu^x,\gamma^x),\quad\mu\text{-a.e.}\  x\in X.
\end{align*}
    In this problem, we have
\begin{align}\label{DIS-MARG}
    \nu^x=\sum_{i=1}^2\alpha_i(x)\delta_{T_i(x)},\quad \text{and}\quad \gamma^x=\sum_{i=1}^2\beta_i(x)\delta_{G_i(x)}.
\end{align}
Thus, for a fixed $x\in X$, we have
\begin{align*}
    \text{Spt}(\lambda^x)\subseteq\big\{(T_i(x),G_j(x))\ :\ i,j=1,2\big\}.
\end{align*}
    which implies that
    \begin{align*}
        \lambda^x=\sum_{i,j=1}^2L_{ij}(x)\delta_{T_i(x)}\times\delta_{G_j(x)}
    \end{align*}
    Thus, \eqref{DIS-LAM} holds. Moreover, from the fact that $\lambda^x$ has marginals $\nu^x$ and $\gamma^x$ as they are in \eqref{DIS-MARG}, one can obtain the system of equations in \eqref{TAGA}.
\item
In this part, due to the fact that the optimization problems \eqref{OPT-CHAR-TWO-ONE} and \eqref{OPT-CHAR-TWO-TWO} admit unique solutions as they are in \eqref{DIS-NU} and \eqref{DIS-GAMMA}, then the infimum in \eqref{OPT-CHAR-THREE} can be  taken  over $\Pi(\lambda^{XY},\gamma)\cap\Pi(\lambda^{XZ},\nu)$ instead.
Let us find the solutions of the system of equations \eqref{TAGA}. We drop $x$ for ease of notation whenever it is needed. We have,
\begin{align}
   & L_{12}=\alpha-L_{11},\label{r11}\\
    &L_{21}=\beta-L_{11},\label{r22}\\
    & L_{22}=1-(\alpha+\beta)+L_{11}\label{r33},
\end{align}
Using \eqref{r11}-\eqref{r33} and the fact that $0\leq L_{ij}\leq 1$,    if $(L_{11}, L_{12}, L_{21}, L_{22})$ solves the system \eqref{TAGA} then $(L_{11}, L_{12}, L_{21}, L_{22}) \in K_{\alpha,\beta}$ where 
\begin{align*}
    K_{\alpha,\beta}:=\bigg\{(L_{11},\alpha-L_{11},\beta-L_{11},1-(\alpha+\beta)+L_{11})\ : \ \max{(0,\alpha+\beta -1)}\leq L_{11}\leq \min{(\alpha,\beta)} \bigg\},
\end{align*}
As one can see, if $L_{11}$ is an end-point of the interval 
\begin{align*}
 [\max{(0,\alpha+\beta -1)},\min{(\alpha,\beta)}],
\end{align*}
then the $4$-tuples 
\begin{align}\label{4TUPLE}
(L_{11},\alpha-L_{11},\beta-L_{11},1-(\alpha+\beta)+L_{11}),
\end{align}
is an extreme point of the set $K_{\alpha,\beta}$. So, we claim that the following measurable maps give us two extreme points of $\Pi(\mu,\nu,\gamma)=\Pi(\lambda^{XY},\gamma)\cap\Pi(\lambda^{XZ},\nu)$
\begin{align*}
    &\overline{L}_{11}:X\to [0,1],\quad \overline{L}_{11}(x)= \max{(0,\alpha(x)+\beta(x) -1)},\\
    &\overline{\overline{L}}_{11}:X\to [0,1],\quad \overline{\overline{L}}_{11}(x)= \min{(\alpha(x),\beta(x))}.
\end{align*}
By \eqref{4TUPLE}, let
\begin{align*}
(\overline{L}_{11},\overline{L}_{12},\overline{L}_{21},\overline{L}_{22})\quad\text{and}\quad (\overline{\overline{L}}_{11},\overline{\overline{L}}_{12},\overline{\overline{L}}_{21},\overline{\overline{L}}_{22})
\end{align*}
denote the two $4$-tuples associated with $\overline{L}_{11}$ and $\overline{\overline{L}}_{11}$, respectively. Moreover, let $\overline{\lambda}$ and $\overline{\overline{\lambda}}$ be induced by these two $4$-tuples as in \eqref{DIS-LAM}. Since, for $\mu$-almost every $x\in X$, these two $4$-tuples are extreme points of $K_{\alpha,\beta}$, then it can be seen that
\begin{align*}
   & \overline{\lambda}^x=\sum_{i,j=1}^2\overline{L}_{ij}(x)\delta_{T_i(x)}\delta_{G_j(x)},\quad\text{and}\quad\overline{\overline{\lambda}}^x=\sum_{i,j=1}^2\overline{\overline{L}}_{ij}(x)\delta_{T_i(x)}\delta_{G_j(x)},
\end{align*}
are extreme points of
\begin{align*}
\Pi\Big(\alpha(x)\delta_{T_1(x)}+(1-\alpha(x))\delta_{T_2(x)},\beta(x)\delta_{G_1(x)}+(1-\beta(x))\delta_{G_2(x)}\Big).
\end{align*}
Hence, via Lemma \ref{LEM2}, the measures $\overline{\lambda}$ and $\overline{\overline{\lambda}}$ are extreme points of
\begin{align*}
\Pi(\lambda^{XY},\gamma)\cap\Pi(\lambda^{XZ},\nu).
\end{align*}
Note that these two extreme points generate all optimal plans of \eqref{OPT-CHAR-THREE}. Indeed, if $\lambda$ is an optimal plan of \eqref{OPT-CHAR-THREE} then, by the first part, we know that it is of the form \eqref{DIS-LAM}. Let $(L_{11},L_{12},L_{21},L_{22})$ be the $4$-tuples of measurable maps associated with $\lambda$. Then we have
\begin{align*}
    \overline{L}_{11}(x)\leq L_{11}(x)  \leq  \overline{\overline{L}}_{11}(x),\quad \mu-\text{a.e.}\; x\in X.
    \end{align*}
Hence, one can find a unique map $\theta :X\to [0,1]$ so that
\begin{align*}
    L_{11}(x)=\theta(x)\overline{L}_{11}(x)+(1-\theta(x))\overline{\overline{L}}_{11}(x),\quad \mu-\text{a.e.}\; x\in X,
\end{align*}
and consequently, all other maps $L_{12}$, $L_{21}$, and $L_{22}$ can be determined.\\
To have a unique extreme point, the necessary and sufficient condition is that the two extreme points $\overline{L}_{11}$ and $\overline{\overline{L}}_{11}$ of $K_{\alpha,\beta}$ coincide. The following computations reveal that the unique extreme point satisfies \eqref{TAGB}.
\begin{align*}
    \max{(0,\alpha+\beta -1)}=\min{(\alpha,\beta)}&\Leftrightarrow \left\{
    \begin{array}{ll}
        0=\min{(\alpha,\beta)} & \Leftrightarrow \alpha=0\;\text{or}\;\beta=0\\
         \text{or}& \text{or}\\
      \alpha+\beta -1 =\min{(\alpha,\beta)}  &\Leftrightarrow \beta=1\;\text{or}\;\alpha=1
    \end{array}
    \right.
\end{align*}
Let us denotes the unique extreme point by $L^*_{11}$. To show \eqref{TAGB}, we just deal with one case, the other cases follow similarly. Let $\alpha=0$. Then we have
\begin{align*}
    &L^*_{11}=0=\alpha \beta,\\
    &L^*_{12}=0=\alpha (1-\beta),\\
    &L^*_{21}=\beta=(1-\alpha)\beta,\\
    &L^*_{22}=1-\beta=(1-\alpha)(1-\beta).
\end{align*}
So, we are done.

\end{enumerate}
\end{proof}

\end{theorem}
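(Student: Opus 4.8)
The plan is to handle the two assertions separately: the first by disintegration, the second by solving the resulting fibrewise linear system and invoking $c$-cyclical monotonicity.

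\emph{First assertion.} I would begin by applying Proposition~\ref{PROP-3} with the index sets $\{1,2\}$ and $\{1,3\}$, so that $\pi_{XY}\#\lambda$ and $\pi_{XZ}\#\lambda$ are optimal plans for \eqref{OPT-CHAR-TWO-ONE} and \eqref{OPT-CHAR-TWO-TWO} and hence, by hypothesis, of the forms \eqref{DIS-NU} and \eqref{DIS-GAMMA}. Next I would disintegrate $\lambda$ over its $X$-marginal $\mu$ using Theorem~\ref{DISINTTH} and Remark~\ref{REM1-APP}, writing $\lambda=\lambda^x\otimes\mu$ with $\lambda^x\in\Pi(\nu^x,\gamma^x)$ for $\mu$-a.e.\ $x$, where $\nu^x=\alpha(x)\delta_{T_1(x)}+(1-\alpha(x))\delta_{T_2(x)}$ and $\gamma^x=\beta(x)\delta_{G_1(x)}+(1-\beta(x))\delta_{G_2(x)}$ are the fibres of the two marginals. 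Since $\nu^x$ is carried by $\{T_1(x),T_2(x)\}$ and $\gamma^x$ by $\{G_1(x),G_2(x)\}$, any coupling $\lambda^x$ of them is carried by the $2\times2$ grid $\{(T_i(x),G_j(x))\}_{i,j=1,2}$, so $\lambda^x=\sum_{i,j}L_{ij}(x)\delta_{T_i(x)}\delta_{G_j(x)}$ with $L_{ij}(x)\ge0$ and $\sum_{i,j}L_{ij}(x)=1$; this is \eqref{DIS-LAM}. Measurability of $x\mapsto L_{ij}(x)$ follows from measurability of $x\mapsto\lambda^x(B)$ (Theorem~\ref{DISINTTH}) and of the maps $T_i,G_j$, after fixing a measurable tie-breaking rule on the sets where $T_1(x)=T_2(x)$ or $G_1(x)=G_2(x)$. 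Finally, projecting $\lambda^x$ onto $Y$ and onto $Z$ and matching with $\nu^x$ and $\gamma^x$ yields the four identities of \eqref{TAGA}.

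\emph{Second assertion.} Suppose the two reduced problems have unique solutions $\lambda^{XY}$ and $\lambda^{XZ}$ of the forms \eqref{DIS-NU}, \eqref{DIS-GAMMA}. By Proposition~\ref{PROP-3} every optimal plan of \eqref{OPT-CHAR-THREE} has these as its $X\times Y$ and $X\times Z$ marginals; conversely any plan with these marginals is feasible for \eqref{OPT-CHAR-THREE}, and since $\int c\,d\lambda\ge\sum_k\int\phi_k\,d\mu_k$ on all of $\Pi(\mu,\nu,\gamma)$ with equality characterising optimality, the infimum may be computed over $\Pi(\lambda^{XY},\gamma)\cap\Pi(\lambda^{XZ},\nu)$ and its minimisers are exactly the optimal plans. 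On this set every plan is, by the first part, of the form \eqref{DIS-LAM} with $(L_{11},L_{12},L_{21},L_{22})(x)$ solving \eqref{TAGA}; eliminating three of the unknowns exhibits the solution set as a segment parametrised by the single scalar $L_{11}(x)$ ranging over $\big[\max(0,\alpha(x)+\beta(x)-1),\,\min(\alpha(x),\beta(x))\big]$, with the two endpoints as its extreme points. Writing $c_{ij}(x):=c(x,T_i(x),G_j(x))$, the cost equals $\int\sum_{i,j}L_{ij}(x)c_{ij}(x)\,d\mu(x)$, which is affine in the selection $L_{11}(\cdot)$ with slope $c_{11}-c_{12}-c_{21}+c_{22}$; to show this slope vanishes $\mu$-a.e.\ I would use $c$-cyclical monotonicity of the support of an optimal plan: every grid point it charges lies in $\{c=\phi_1+\phi_2+\phi_3\}$, an optimal fibre coupling of two genuine two-point measures always charges a full ``rectangle'' $\{(1,1),(2,2)\}$ or $\{(1,2),(2,1)\}$, and a two-point swap of the third coordinate then forces the complementary pair of grid points into $\{c=\phi_1+\phi_2+\phi_3\}$ as well, so all four lie there for $\mu$-a.e.\ non-degenerate $x$ and the slope is $0$. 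Hence every measurable selection of $L_{11}(x)$ from that interval produces an optimal plan.

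In particular, the endpoint selections $\overline L_{11}(x)=\max(0,\alpha(x)+\beta(x)-1)$ and $\overline{\overline L}_{11}(x)=\min(\alpha(x),\beta(x))$ give optimal plans $\overline\lambda,\overline{\overline\lambda}$. Being unique solutions, $\lambda^{XY}$ and $\lambda^{XZ}$ are extreme in $\Pi(\mu,\nu)$ and $\Pi(\mu,\gamma)$ (cf.\ Remark~\ref{REM-NOTICE}), so any splitting of $\overline\lambda$ in $\Pi(\mu,\nu,\gamma)$ forces the competitors into $\Pi(\lambda^{XY},\gamma)\cap\Pi(\lambda^{XZ},\nu)$, and the fibrewise extremality of $\overline\lambda^x$ in $\Pi(\nu^x,\gamma^x)$ (an elementary disintegration argument) then forces them to equal $\overline\lambda$; thus $\overline\lambda$, and likewise $\overline{\overline\lambda}$, are extreme points of $\Pi(\mu,\nu,\gamma)$. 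Any optimal plan $\lambda$ has $L_{11}(x)$ in the interval above, hence $L_{11}(x)=\theta(x)\overline L_{11}(x)+(1-\theta(x))\overline{\overline L}_{11}(x)$ for a unique measurable $\theta:X\to[0,1]$, and the other three coefficients are then determined, so $\overline\lambda$ and $\overline{\overline\lambda}$ generate all optimal plans. Finally, the optimal plan is unique iff $\overline L_{11}=\overline{\overline L}_{11}$ $\mu$-a.e., i.e.\ $\max(0,\alpha+\beta-1)=\min(\alpha,\beta)$ $\mu$-a.e.; an elementary case analysis shows this is equivalent to $\alpha\in\{0,1\}$ or $\beta\in\{0,1\}$ $\mu$-a.e., in which case the common value of $L_{11}$ is $\alpha(x)\beta(x)$, and \eqref{TAGA} then gives \eqref{TAGB}. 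The step I expect to be the main obstacle is precisely the vanishing of the slope $c_{11}-c_{12}-c_{21}+c_{22}$: optimality of a competitor only controls $\int c\,d\lambda$, so one genuinely needs $c$-cyclical monotonicity of the optimal support, together with the observation that an optimal fibre coupling of two genuine two-point measures always charges a full rectangle, to upgrade it to this pointwise identity; the measurable tie-breaking in the first assertion and the lifting of fibrewise extremality are comparatively routine.
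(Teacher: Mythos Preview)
Your Part 1 matches the paper almost exactly: disintegrate $\lambda=\lambda^x\otimes\mu$ with $\lambda^x\in\Pi(\nu^x,\gamma^x)$, note the fibre is supported on the $2\times2$ grid, read off \eqref{DIS-LAM}, and recover \eqref{TAGA} from the marginal constraints.

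In Part 2 the structural argument (parametrise by $L_{11}$, identify the interval $[\max(0,\alpha+\beta-1),\min(\alpha,\beta)]$, take the endpoint selections $\overline L_{11},\overline{\overline L}_{11}$, lift fibrewise extremality to extremality in $\Pi(\lambda^{XY},\gamma)\cap\Pi(\lambda^{XZ},\nu)$, and finish the uniqueness case analysis) is essentially the paper's argument; your fibrewise-extremality lift is exactly Lemma~\ref{LEM2}.

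The one place you diverge is that you explicitly try to prove that \emph{every} measure in $\Pi(\lambda^{XY},\gamma)\cap\Pi(\lambda^{XZ},\nu)$ is optimal, by showing the fibrewise slope $c_{11}-c_{12}-c_{21}+c_{22}$ vanishes. The paper does not attempt this step at all; it simply identifies $\overline\lambda,\overline{\overline\lambda}$ as the extreme points of that constraint set and asserts they ``generate all optimal plans''. You are right that something is being glossed over. However, your proposed argument does not close the gap: if, say, $(x,T_1,G_1)$ and $(x,T_2,G_2)$ lie in the optimal support, $c$-cyclical monotonicity gives $c_{11}+c_{22}\le c_{12}+c_{21}$, while the potential inequality $c_{ij}\ge\phi_1(x)+\phi_2(T_i)+\phi_3(G_j)$ summed over the anti-diagonal also yields $c_{12}+c_{21}\ge c_{11}+c_{22}$. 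These are the \emph{same} inequality, not opposite ones, so you cannot conclude equality, and hence cannot force the complementary pair into $\{c=\phi_1+\phi_2+\phi_3\}$. Thus the slope need not vanish, and in general only one of the two endpoint plans is forced to be optimal on the set where the slope is nonzero. This is a genuine gap in your argument (and, as you implicitly noticed, a point the paper's own proof leaves unaddressed).
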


\begin{remark}
One of the applications of Theorem \ref{THEOREM-2-TWIST-GEN} can be seen when the conditions of Theorem \ref{MOMEN-CHAR-TH} are satisfied for the problems \eqref{OPT-CHAR-TWO-ONE} and \eqref{OPT-CHAR-TWO-TWO}. Indeed, let $X$ be a complete separable Riemannian manifold and that $Y$ and $Z$ are Polish spaces. If
\begin{enumerate}
    \item[(i)]
    the function $c$ is bounded continuous such that any $c$-concave function is $\mu$-almost everywhere differentiable on its domain,
\item[(ii)]
the functions $c_i$ given in \eqref{COSTC1} and \eqref{COSTC2} satisfy $2$-twist condition, 
\item[(iii)]
the measure $\mu$ is non-atomic,
\end{enumerate}
then the optimal plans of \eqref{OPT-CHAR-TWO-ONE} and \eqref{OPT-CHAR-TWO-TWO} are of the forms \eqref{DIS-NU} and \eqref{DIS-GAMMA}.
Moreover, this theorem can be used  to describe the  optimal plan  of a  three-dimensional Monge-Kantorovich problem which has a unique solution but  the optimal plan is not necessarily concentrated on a single map.
\end{remark}

\section{Applications}\label{SectionApplication}
In this section we provide several applications of the reduction argument from  Section 3. In the first application, we address a $3$-marginal problem with a quadratic cost  for which the unique optimal plan  may concentrate on the graphs of several measurable maps as opposed to just one map.  In our second application, we consider a cost function which is a mix of the quadratic  and the Euclidean norm.  In our last application,  we shall provide a short proof for the well-known  Gangbo and \'Swi\c ech quadratic cost based the reduction argument.

\subsection{ A three dimensional marginal problem with a  quadratic cost function}
Consider the following cost function
\begin{align*}
   (x,y,z) \to |x-y|^2+|x-z|^2+|y-z|^2,
\end{align*}
with the domain $X\times Y\times Z \subset \mathbb{R}^n \times \mathbb{R}^n \times \mathbb{R}^n$, where $|\cdot |$ is the standard Euclidean norm on $\mathbb{R}^n$ and $Y=Y_1\cup Y_2$. Here, $X$, $Y_1$ and $Y_2$  are parallel planes in  $\mathbb{R}^{n}$ of the following forms
\begin{align*}
    & X=\{x\in\mathbb{R}^n\ : \langle\mathrm{n}_0,x\rangle=d_0\},\quad Y_1=\{y\in\mathbb{R}^n\ : \langle\mathrm{n}_0,y\rangle=d_1\}\quad\text{and}\quad Y_2=\{y\in\mathbb{R}^n\ : \langle\mathrm{n}_0,y\rangle=d_2\},
\end{align*}
where $\mathrm{n}_0$ denotes the normal vector of the parallel planes and we have set $Y=Y_1\cup Y_2$. Moreover, we assume that $Z=\cup_{l=1}^L\mathcal{S}_l$, in which $\mathcal{S}_l$'s are the boundaries of the nested strictly convex bodies $\Omega_l\subseteq\mathbb{R}^n$ with $\Omega_1 \subset \Omega_2\subset ...\subset \Omega_L$.

\begin{figure}[h]
    \centering
    \includegraphics[scale=.5]{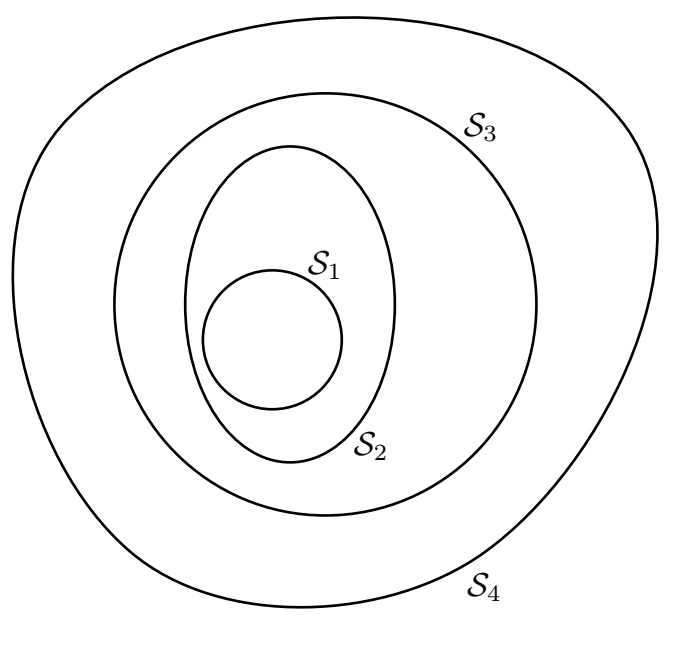}
    \caption{Nested convex sets}
    \label{fig:my_label}
\end{figure}
Before stating the general result, we give an example which distinguishes our result from Gangbo-\'Swi\c ech theorem. In fact, the following example illustrates the fact that under our assumptions, the unique solution is not concentrated on the graph of a  single map.\\
\begin{example}\label{EX-NON-UNIQ}
Consider the $3$-dimensional spaces $X$, $Y$ and $Z$ as follows:
\begin{align*}
    &X=Y=\big\{ x=(x_1,x_2,x_3)\in\mathbb{R}^3\ :\ x_3=0 \big\},\\
    &Z=\big\{z=(z_1,z_2,z_3)\in\mathbb{R}^3\ :\ |z|^2=z_1^2+z_2^2+z_3^2=1\big\}=S^1
\end{align*}
Equip $X$ and $Y$ with a compactly supported  probability measure $\mu$ which is absolutely continuous with respect to the Lebesgue measure on $\mathbb{R}^2$, i.e.,
\begin{align*}
    d\mu(x_1,x_2,0)=h(x_1,x_2)d\mathcal{L}^2(x_1,x_2),
\end{align*}
where $d\mathcal{L}^2$ denotes Lebesgue measure on $\mathbb{R}^2$, and $h$ is a non-negative integrable function. Moreover, let us impose on $Z$  the probability measure
\begin{align*}
d\gamma=\alpha(z_1,z_2,z_3) dS,
\end{align*}
where $dS$ denotes the surface area element and,  $\alpha$ is a positive continuous function satisfying  the following property
\begin{align}\label{EXEQ2}
    \alpha(z_1,z_2,z_3)=\alpha(z_1,z_2,-z_3).
\end{align}
For instance, $\alpha$ can be chosen to be the constant $\frac{1}{4\pi}$. By the property \eqref{EXEQ2} we have that
\begin{align}\label{EXEQ3}
   \int_{Z}f(z_1,z_2,z_3) d\gamma(z_1,z_2,z_3)=\int_{Z}f(z_1,z_2,-z_3)d\gamma(z_1,z_2,z_3),\quad\forall f\in C_b(Z).
\end{align}
Now let us consider the following three-marginal problem
\begin{align}\label{EXEQ1}
    \inf\big\{ \int_{X\times Y\times Z}\big(|x-y|^2+|x-z|^2+|y-z|^2\big)\, d\lambda(x,y,z)\ :\ \lambda\in\Pi(\mu,\mu,\gamma) \big\}
\end{align}
We shall show that if the problem \eqref{EXEQ1} has a solution induced by a transport map, then it fails to have a unique solution. We remark that since $X$ and $Y$ are  equipped with the same probability measure $\mu,$ for any optimal plan $\lambda\in\Pi(\mu,\mu,\gamma)$ we have that
\begin{align}\label{EXEQ4}
   (x,y,z)\in\text{Spt}(\lambda)\Longleftrightarrow (y,x,z)\in\text{Spt}(\lambda).
\end{align}
To see \eqref{EXEQ4}, we just need to note that any ball centered at $ (y,x,z) $ induces a ball with the same radius centered at $ (x,y,z) $.
On the other hand, if $\lambda$ is an optimal plan, then by $c$-cyclically monotonicity of $\lambda$ and \eqref{EXEQ4}, for any $(x,y,z)\in\text{Spt}(\lambda)$, we have that
\begin{align*}
    c(x,y,z)+c(y,x,z)\leq c(x,x,z)+c(y,y,z),
\end{align*}
from which we get
\begin{align*}
    |x-y|\leq 0.
\end{align*}
Hence, 
\begin{align}\label{EXEQ5}
    x=y,\quad \forall (x,y,z)\in\text{Spt}(\lambda).
\end{align}
Let \eqref{EXEQ1} admits a solution induced by a transport map, that is, an optimal transport of the form $\lambda=(\text{id}\times T\times G)\#\mu$, where
\begin{align*}
  T:X\to Y,\quad \text{and} \quad  G:X\to Z,
\end{align*}
are measurable maps. Note that $G=(G_1,G_2,G_3)$ where $G_1, G_2, G_3: X \to \mathbb{R}$ are scalar functions. First, we note that by \eqref{EXEQ5}, we have $T=\text{id}$. Thus,
\begin{align*}
    \lambda=(\text{id}\times \text{id}\times G)\#\mu.
\end{align*}
Now, we shall show that there is another optimal plan $\tilde{\lambda}$ which is different from $\lambda$. Define
\begin{align*}
    \tilde{G}:X\to Z,\quad \tilde{G}=(G_1,G_2,-G_3),
\end{align*}
and correspondingly define
\begin{align*}
     \tilde{\lambda}=(\text{id}\times \text{id}\times \tilde{G})\#\mu.
\end{align*}

We claim that 
\begin{itemize}
    \item[(i)]
$\tilde{G}\#\mu=\gamma$,
    \item[(ii)]
    $\tilde{\lambda}$ is an optimal plan.
\end{itemize}
To see $(\text{i})$, let $f\in C_b(Z)$. Then by using \eqref{EXEQ3}, $G\#\mu=\gamma$, and the fact that the function $F(z_1,z_2,z_3):=f(z_1,z_2,-z_3)$ belongs to $C_b(Z)$, we have
\begin{align*}
    \int_Z f(z)\,d\tilde{G}\#\mu(z)&=\int_X f(\tilde{G}(x))\,d\mu(x)\\
    &=\int_X F(G(x))\,d\mu(x)\\
    &=\int_Z F(z_1,z_2,z_3)\,d\gamma (z_1,z_2,z_3)\\
    &=\int_Z f(z_1,z_2,-z_3)\,d\gamma (z_1,z_2,z_3)\\
    &=\int_Z f(z)\,d\gamma (z)
\end{align*}
which proves $(\text{i})$. For the part $(\text{ii})$, we take advantage of \eqref{EXEQ5}. Indeed, by knowing the support of the optimal plans of \eqref{EXEQ1}, the claim can be established through the following computations
\begin{align*}
    \int c(x,y,z)\,d\lambda&=\int_X |x-x|^2+2|x-G(x)|^2\,d\mu(x)\\
    &=2\int_X |x-G(x)|^2\,d\mu(x),\quad x=(x_1,x_2,0)\\
    &=2\int_X \Big(|x_1-G_1(x_1,x_2,0)|^2+|x_2-G_2(x_1,x_2,0)|^2+|0-G_3(x_1,x_2,0)|^2\Big)\,d\mu(x_1,x_2,0)\\
       &=2\int_X |x-\tilde{G}(x)|^2\,d\mu(x)\\
       &=\int c(x,y,z)\,d\tilde{\lambda}.
\end{align*}
Hence, $\tilde{\lambda}$ is an optimal plan.
\hfill \qedsymbol{}
\end{example}
Consider the following equivalent problem  in terms of the inner product in $\mathbb{R}^n,$
\begin{align}\label{3MARG-QUAD}
    \sup_{\lambda\in\Pi(\mu,\nu,\gamma)}\int_{X\times Y\times Z} (\langle x,y\rangle+\langle x,z\rangle+\langle y,z\rangle)\,d\lambda .
\end{align}
\begin{figure}[h]
    \centering
    \includegraphics[scale=.2]{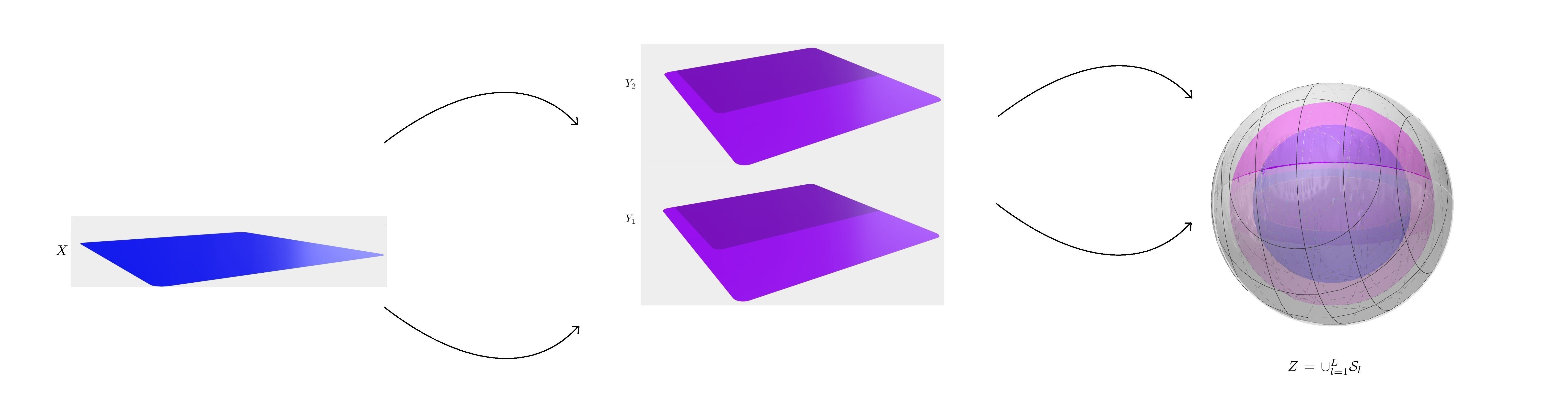}
    \caption{An illustration of the domains for the problem \eqref{3MARG-QUAD}  with  $Z=\cup_{l=1}^L\mathcal{S}_l$ is sketched for $L=3$.}
    \label{fig:my_label}
\end{figure}

We are now ready to state our first application.

\begin{theorem} \label{3MARG-QUADTHEOREM}
Let $X$,$Y_1$ and $Y_2$  be parallel planes in $\mathbb{R}^n$ equipped with compactly supported Borel probability measures $\mu$, $\nu_1$ and $\nu_2$, respectively. Assume that $Z=\cup_{l=1}^L\mathcal{S}_l$ is such that $\mathcal{S}_l$'s are boundary of bounded nested bodies $\Omega_l$ in $\mathbb{R}^n$, i.e., $\Omega_{l}\subseteq \Omega_{l+1}$, together with the Borel probability measure $\gamma$. Assume that $\mu$ and $\nu_2$ are absolutely continuous with respect to the $n-1$ dimensional Lebesgue measure on $X$ and $Y_2$, and consider $Y=Y_1\cup Y_2$ equipped with the Borel probability measure $\nu=t\nu_1+(1-t)\nu_2$, for some $t\in (0,1)$.  If $\gamma$ is  absolutely continuous with respect to the Lebesgue measure on $\mathbb{R}^{n-1}$  on each coordinate charts on  $Z$, then the problem \eqref{3MARG-QUAD} admits a unique solution which is not necessarily concentrated on the graph of an optimal transport map.
\end{theorem}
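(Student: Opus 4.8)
The plan is to recast \eqref{3MARG-QUAD} as a minimization problem and then run the reduction argument of Section~\ref{SectionDisintegration} with $Z$ playing the role of the distinguished marginal. Since $\mu,\nu,\gamma$ have finite second moments, the identity $|x-y|^2+|x-z|^2+|y-z|^2=2(|x|^2+|y|^2+|z|^2)-2(\langle x,y\rangle+\langle x,z\rangle+\langle y,z\rangle)$ shows that, modulo an additive constant on $\Pi(\mu,\nu,\gamma)$, \eqref{3MARG-QUAD} is equivalent to minimizing $\int c\,d\lambda$ with $c(x,y,z)=-(\langle x,y\rangle+\langle x,z\rangle+\langle y,z\rangle)$; an optimal plan exists because $Z$ is compact and $c$ is continuous and bounded below by integrable single-variable functions. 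Fix dual potentials $(\phi_1,\phi_2,\phi_3)$ and form the two reduced two-marginal problems \eqref{TWOMARGREDUC}: on $X\times Y$ with $c_{12}(x,y)=\inf_{z\in Z}\{c(x,y,z)-\phi_3(z)\}$, and on $X\times Z$ with $c_{13}(x,z)=\inf_{y\in Y}\{c(x,y,z)-\phi_2(y)\}$. We aim to prove that the first has a unique solution induced by a measurable map $T\colon X\to Y$ and the second a unique solution; Theorem~\ref{BIGTH} with $j_0=3$ then forces \eqref{3MARG-QUAD} to have a unique optimal plan, of the form $(\delta_{T(x)}\times\mu^{x}_{3})\otimes\mu$.

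For the $X\times Y$ problem the key is that $X$ and $Y$ are parallel hyperplanes. Writing $x=d_1\mathrm{n}_0+\tilde x$ and $y=d_2\mathrm{n}_0+\tilde y$ with $\tilde x,\tilde y\in\mathrm{n}_0^{\perp}\cong\mathbb{R}^{n-1}$, we get $\langle x,z\rangle+\langle y,z\rangle=\langle x+y,z\rangle$ with $x+y=(d_1+d_2)\mathrm{n}_0+(\tilde x+\tilde y)$, and $\langle x,y\rangle=d_1d_2+\langle\tilde x,\tilde y\rangle$. Hence $c_{12}(x,y)=-\langle x,y\rangle+\psi(x+y)$ with $\psi(w):=\inf_{z\in Z}\{-\langle w,z\rangle-\phi_3(z)\}$ concave, and modulo a constant and functions of $x$ alone and of $y$ alone this cost agrees with $(\tilde x,\tilde y)\mapsto-\langle\tilde x,\tilde y\rangle+\tilde\psi(\tilde x+\tilde y)$ on $\mathbb{R}^{n-1}$, $\tilde\psi$ concave. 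This cost is twisted: $D_{\tilde x}c_{12}(\tilde x,\tilde y_1)=D_{\tilde x}c_{12}(\tilde x,\tilde y_2)$ gives $\nabla\tilde\psi(\tilde x+\tilde y_1)-\nabla\tilde\psi(\tilde x+\tilde y_2)=\tilde y_1-\tilde y_2$, and pairing this with $\tilde y_1-\tilde y_2$ together with the monotonicity of $\nabla\tilde\psi$ (concavity of $\tilde\psi$) forces $|\tilde y_1-\tilde y_2|^2\le 0$. Since $\mu$ is absolutely continuous, the twist condition together with Remark~\ref{REM-NOTICE} yields a unique optimal plan of the $X\times Y$ problem, concentrated on the graph of a measurable map $T\colon X\to Y$.

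For the $X\times Z$ problem the same computation gives $c_{13}(x,z)=-\langle x,z\rangle+\chi(x+z)$ with $\chi$ concave, and reducing modulo single-variable functions one finds that $c_{13}$ agrees with $\rho(\tilde x+\tilde z)$, where $\tilde z=\pi_{\mathrm{n}_0^{\perp}}(z)$ and $\rho=-\tfrac12|\cdot|^2+\eta$ with $\eta$ concave; thus $\rho$ is strictly concave and $c_{13}$ is ``of Brenier type'' in the variables $(\tilde x,\tilde z)$. The obstruction is that $\tilde z$ does not determine $z\in Z=\bigcup_{l=1}^{L}\mathcal{S}_l$, since a line in direction $\mathrm{n}_0$ meets each $\mathcal{S}_l$ in at most two points. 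To overcome this I would take the Borel ordered partition $P=\{\mathcal{S}_l\}_{l=1}^{L}$, ordered from the innermost body outward, and show that the support of every optimal plan $\lambda_{13}\in\Pi(\mu,\gamma)$ of the $X\times Z$ problem is $(c_{13},P)$-extreme: $c_{13}$-cyclical monotonicity, the strict convexity of each $\Omega_l$, and the strict concavity of $\rho$ together force the section over a given $x$ to meet each surface $\mathcal{S}_l$ in at most one point and prevent two distinct source points from sharing a non-selected target on the same surface, while the nesting $\Omega_1\subset\cdots\subset\Omega_L$ makes the innermost-surface selection $\iota(x)$ well defined and the selections mutually consistent --- precisely conditions (i)--(ii) of $(c_{13},P)$-extremality (here the absolute continuity of $\gamma$ on coordinate charts is used to discard the $\gamma$-negligible sets on $Z$ where such uniqueness could fail). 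Theorem~\ref{THc-EXT} then gives a unique optimal plan $\lambda_{13}=\mu^{x}_{3}\otimes\mu$ for the $X\times Z$ problem, and Theorem~\ref{BIGTH} with $j_0=3$ produces the unique optimal plan $\lambda=(\delta_{T(x)}\times\mu^{x}_{3})\otimes\mu$ of \eqref{3MARG-QUAD}; translating back gives uniqueness for the supremum formulation.

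Finally, that this unique plan need not be carried by a graph is witnessed by Example~\ref{EX-NON-UNIQ}: its data (a single sphere, $L=1$; $\mu=\nu$; $\gamma$ invariant under $z_3\mapsto -z_3$) fall under the present hypotheses, and there any Monge-type optimal plan is shown to admit a distinct reflected optimal plan --- impossible once uniqueness is established --- so the optimal plan is genuinely not concentrated on the graph of a single map. The main difficulty I anticipate is the $X\times Z$ step: converting the nested--strictly--convex geometry into the combinatorial requirements of $(c,P)$-extremality, and in particular ruling out shared non-extremal targets across fibers while verifying that the innermost-surface selection is consistent.
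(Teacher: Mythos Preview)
Your overall strategy---reduce to the two-marginal problems on $X\times Y$ and $X\times Z$ and invoke Theorem~\ref{BIGTH} with $j_0=3$---is a legitimate alternative to the paper's route, and your $X\times Y$ step is essentially the same computation as the paper's: the paper also shows that $F_{\lambda_1}(x)$ is a singleton using the parallel-planes identity $\langle \mathrm{n}_0,y^*-y\rangle=0$ together with monotonicity of $D\psi$, which is exactly your twist argument in the $\tilde x,\tilde y$ coordinates.

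The genuine difference is how the $Z$-marginal is handled. The paper does \emph{not} reduce to a two-marginal problem on $X\times Z$; instead it works on $(X\times Y)\times Z$ with the \emph{original} cost $c$ and shows that every optimal $\lambda$ is $(c,P)$-extreme there, with $P=\{\mathcal{S}_l\}_{l=1}^L$. The advantage is that $c$ is linear in $z$ for fixed $(x,y)$, so the first-order condition at a shared point $z^*$ (Lemma~\ref{ext-Gen}) gives the clean relation $\mathrm{n}(z^*)=\alpha\big((x'+y')-(x+y)\big)$ with no infimum to unfold. One then tests $c$-cyclical monotonicity of $\text{Spt}(\lambda)$ on the pairs $\{(x,y,z),(x',y',z^*)\}$ and $\{(x',y',z'),(x,y,z^*)\}$; because $c$ is bilinear this yields $\langle (x'+y')-(x+y),z^*-z\rangle\ge 0$ and $\langle (x+y)-(x'+y'),z^*-z'\rangle\ge 0$, and combining with the strict-convexity inequalities $\langle\mathrm{n}(z^*),z^*-z\rangle>0$, $\langle\mathrm{n}(z^*),z^*-z'\rangle>0$ forces $\alpha>0$ and $\alpha<0$ simultaneously. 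Uniqueness then follows from Theorems~\ref{MAINTHEOREM}--\ref{MAINTHEOREMCP} (Remark~\ref{REM-MAINTH}).

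Your plan runs the same geometric argument on $X\times Z$ with the reduced cost $c_{13}(x,z)=-\langle x,z\rangle+\chi(x+z)$. This is where your sketch is thin. First, the claim that ``the section over a given $x$ meets each $\mathcal{S}_l$ in at most one point'' is stronger than $(c_{13},P)$-extremality requires and does not follow from $c_{13}$-cyclical monotonicity alone (two-cycles with the same $x$ are vacuous). Second, even for condition~(ii), the presence of the concave term $\chi$ means the normal-direction identity becomes $\mathrm{n}(z^*)\parallel (x'-x)+\nabla\chi(x+z^*)-\nabla\chi(x'+z^*)$ and the cyclical-monotonicity inequalities acquire second-difference terms in $\chi$; bounding these with concavity goes in the wrong direction at one of the two endpoints, so the clean sign contradiction of the paper does not immediately go through. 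The fix, once you have already proved the $X\times Y$ map $T$, is to lift back: $(x,z)\in\text{Spt}(\lambda_{13})$ iff $(x,T(x),z)\in\text{Spt}(\lambda)$, so the paper's $(c,P)$-extremality on $(X\times Y)\times Z$ and your $(c_{13},P)$-extremality on $X\times Z$ are equivalent statements, and the former is the one that is straightforward to verify. In short, both routes reach the same geometric fact; the paper's choice to work with the undreduced cost on $(X\times Y)\times Z$ is what makes the nested-convex-surface argument transparent.
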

We shall need some preliminaries before proving this Theorem. We note that since the marginals are compactly supported and the cost function is continuous then the problem \eqref{3MARG-QUAD} and its dual admit solutions. Let 
\begin{align*}
(\phi_1,\phi_2,\phi_3)\in L_1(X,\mu)\times L_1(Y,\nu)\times L_1(Z,\gamma),
\end{align*}
be a solution to the Kantorovich dual problem to \eqref{3MARG-QUAD} which can be considered to be convex. By sticking to  the notations in \eqref{GEN-TRI}, we define the function $c_1:X\times Y\to \mathbb{R}$ as
\begin{align*}
 c_1(x,y)=\sup_{z\in Z}\big\{c(x,y,z)-\phi_3(z)\big\},
\end{align*}
where \[c(x,y,z)=\langle x,y\rangle+\langle x,z\rangle+\langle y,z\rangle.\]
It follows that 
\begin{align}\label{c1-3MARGQUAD}
 c_1(x,y)=\langle x,y\rangle+\psi(x+y) ,\quad\text{with}\quad    \psi(t)=\sup_{z\in Z}\big\{\langle t,z\rangle-\phi_3(z)\big\}.
\end{align}
Additionally, we shall need to consider the reduced version of the problem \eqref{3MARG-QUAD} corresponding to $c_1$,
\begin{align}\label{c1-REDUC-PROB}
    \sup_{\tau\in\Pi(\mu,\nu)}\int (\langle x,y\rangle+\psi(x+y))\,d\tau .
\end{align}
We need the following lemma in the proof of Theorem \eqref{3MARG-QUADTHEOREM}. Here, for an element $z^*\in Z$, the unit outward normal to $Z$ at $z^*$ will be  denoted by $\mathrm{n}(z^*)$.

\begin{lemma}\label{ext-Gen} Let $\lambda \in \Pi(\mu,\nu,\gamma)$ be an optimal plan of \eqref{3MARG-QUAD}. Then there exists a full $\gamma$-measure subset $Z_0$ of $Z$ such that for all $z^* \in Z_0$ if there exist $(x,y)$ and $(x',y')$ in $X\times Y$ such that   $(x,y,z^*)$ and $(x',y',z^*)$ belong to $\text{Spt}(\lambda)$ then there exists $\alpha \in \mathbb{R}$ such that
\begin{align*}
    \mathrm{n}(z^*)=\alpha (y'-y+x'-x).
\end{align*}

\begin{proof}
The proof follows from the  convexity of the potential $\phi_3$ and the absolute continuity of $\gamma$. In fact, since $\phi_3$ can be expressed as
\begin{align*}
    \phi_3(z)=\sup_{X\times Y}\big\{\langle z,x+y\rangle +\langle x,y\rangle-(\phi_1(x)+\phi_2(y))\big\},
\end{align*}
together with the fact that $\phi_3\in L_1(Z,\gamma)$ it can be obtained that $\phi_3$ is convex, $\gamma$-almost everywhere finite and differentiable on its effective domain. In fact, since $\phi_3$ is locally Lipschitz on $\cup_{l=1}^L \Omega_l$, then its restriction on $Z=\cup_{l=1}^L \mathcal{S}_l$ is  locally Lipschitz with respect to the induced distance on $Z$. Therefore, via Rademacher's theorem, the function $\phi_3|_Z$ and the manifolds $\mathcal{S}_l$ are differentiable for $\mathcal{L}^{n-1}$-a.e. points in $Z$. Here $\mathcal{L}^{n-1}$ stands for the Lebesgue measure on $\mathbb{R}^{n-1}$. Let  $Z_0=\text{Dom}(D\phi_3|_Z)$. Then by absolute continuity of $\gamma$ we have $\gamma(Z_0)=1$. Let $z^*\in Z_0$ and $(x,y),(x',y')\in X\times Y$ be such that
\begin{align*}
    (x,y,z^*),\; (x',y',z^*)\in\text{Spt}(\lambda).
\end{align*}
Then, because of the differentiability of $\phi_3$ at $z^*$ we have
\begin{align}\label{ZZ-DIF}
D_z c(x,y,z^*)=D_z c(x',y',z^*).
\end{align}
Now, let $f:U_{z^*}\to\mathbb{R}^{n-1}$ be any coordinate chart for $Z$ where $U_{z^*}$ is an open neighborhood of $z^*$. From \eqref{ZZ-DIF} it follows that
\begin{align*}
    \big(x+y-(x'+y')\big)\cdot Df(z^*)=0,
\end{align*}
from which the result follows.
\end{proof}

\end{lemma}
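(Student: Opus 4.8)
\textbf{Proof proposal for Lemma \ref{ext-Gen}.}
The plan is to exploit the fact that the third potential $\phi_3$ arising from the Kantorovich duality for \eqref{3MARG-QUAD} is a supremum of affine functions of $z$, hence convex, and then invoke absolute continuity of $\gamma$ to localize to the differentiability set of $\phi_3$. First I would write $\phi_3$ explicitly: since $(\phi_1,\phi_2,\phi_3)$ solves the dual and the cost is $c(x,y,z)=\langle x,y\rangle+\langle x,z\rangle+\langle y,z\rangle=\langle x,y\rangle+\langle x+y,z\rangle$, the standard $c$-conjugacy relation forces (up to the usual normalization)
\[
\phi_3(z)=\sup_{(x,y)\in X\times Y}\big\{\langle z,x+y\rangle+\langle x,y\rangle-\phi_1(x)-\phi_2(y)\big\}.
\]
Being a supremum of functions that are affine in $z$, $\phi_3$ is convex on $\mathbb{R}^n$; since $\phi_3\in L_1(Z,\gamma)$ it is $\gamma$-a.e.\ finite, so it is finite and (by Rademacher, or by the standard fact that a finite convex function is differentiable off a Lebesgue-null set) differentiable off a Lebesgue-null subset of the interior of its effective domain. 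Because $Z$ is a union of boundaries of convex bodies, each chart is $(n-1)$-dimensional, and the hypothesis that $\gamma$ is absolutely continuous in each coordinate chart on $Z$ guarantees that the set $Z_0:=\{z^\ast\in Z:\ \phi_3\ \text{is differentiable at}\ z^\ast\ \text{relative to}\ Z\}$ has full $\gamma$-measure. (Differentiability of $\phi_3$ along the manifold $Z$ follows from differentiability of its convex extension; one only needs the restriction to each chart to be differentiable, and the exceptional set, being the image of a Lebesgue-null set under a smooth chart, is $\gamma$-negligible.)

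The second step is the first-order condition on the support. Fix $z^\ast\in Z_0$ and suppose $(x,y,z^\ast)$ and $(x',y',z^\ast)$ both lie in $\mathrm{Spt}(\lambda)$. By \eqref{SPLT-SET2}--\eqref{SPLT-SET3} applied to \eqref{3MARG-QUAD} (written as a minimization of $-c$), the support of $\lambda$ lies in the contact set $\{c(x,y,z)=\phi_1(x)+\phi_2(y)+\phi_3(z)\}$. Hence both maps
\[
z\mapsto c(x,y,z)-\phi_3(z)\qquad\text{and}\qquad z\mapsto c(x',y',z)-\phi_3(z)
\]
attain their maximum over $Z$ at $z=z^\ast$. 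Since $c(x,y,\cdot)$ is smooth (indeed affine) and $\phi_3$ is differentiable at $z^\ast$ relative to $Z$, the constrained first-order condition gives that the tangential part of $D_z c(x,y,z^\ast)-D\phi_3(z^\ast)$ vanishes, i.e.\ $D_z c(x,y,z^\ast)$ and $D_z c(x',y',z^\ast)$ have the same orthogonal projection onto $T_{z^\ast}Z$, so
\[
D_z c(x,y,z^\ast)-D_z c(x',y',z^\ast)\ \perp\ T_{z^\ast}Z.
\]
Concretely, in any coordinate chart $f:U_{z^\ast}\to Z$ with $f(u^\ast)=z^\ast$, the function $u\mapsto c(x,y,f(u))-\phi_3(f(u))$ has a critical point at $u^\ast$, and subtracting the analogous identity for $(x',y')$ yields $\big(D_zc(x,y,z^\ast)-D_zc(x',y',z^\ast)\big)\cdot Df(u^\ast)=0$, which is exactly the statement that the difference of $z$-gradients annihilates $T_{z^\ast}Z$.

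The third and final step is the computation of the gradients. For this cost, $D_z c(x,y,z)=x+y$ and $D_z c(x',y',z)=x'+y'$, both independent of $z$. Therefore the previous step reads $(x+y)-(x'+y')\perp T_{z^\ast}Z$, i.e.\ $(x+y)-(x'+y')$ is parallel to the outward unit normal $\mathrm{n}(z^\ast)$ of $Z$ at $z^\ast$. Writing $(x+y)-(x'+y')=-\big(y'-y+x'-x\big)$, this gives $\mathrm{n}(z^\ast)=\alpha\,(y'-y+x'-x)$ for a suitable scalar $\alpha\in\mathbb{R}$ (with $\alpha=0$ precisely when $x+y=x'+y'$), which is the assertion.

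I expect the only delicate point to be Step 1: justifying that the set where $\phi_3$ fails to be differentiable \emph{relative to the manifold $Z$} is $\gamma$-negligible. The subtlety is that $Z$ is a curved $(n-1)$-dimensional set, so one cannot directly quote ``convex functions are differentiable Lebesgue-a.e.''; one must argue that the non-differentiability set of the convex extension is Lebesgue-null in $\mathbb{R}^n$, intersect it with $Z$, and then use the hypothesis that $\gamma$ restricted to each coordinate chart is absolutely continuous (equivalently, charts push null sets to $\gamma$-null sets) to conclude $\gamma(Z\setminus Z_0)=0$. Everything after that is a one-line gradient computation special to the quadratic cost.
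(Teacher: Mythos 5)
Your proposal is correct and follows essentially the same route as the paper's proof: the same dual formula exhibiting $\phi_3$ as a supremum of affine functions of $z$ (hence convex and $\gamma$-a.e.\ differentiable on a full-measure set $Z_0$, via absolute continuity of $\gamma$ in charts), the same first-order condition at the contact point written in a coordinate chart, and the same computation $D_zc(x,y,z)=x+y$ showing that $(x+y)-(x'+y')$ annihilates $T_{z^*}Z$ and is therefore parallel to $\mathrm{n}(z^*)$. The only difference is that you spell out that the differentiability needed is that of $\phi_3$ relative to the hypersurface $Z$ (chart-wise), which is exactly what the paper's chart identity $\big(x+y-(x'+y')\big)\cdot Df(z^*)=0$ uses implicitly.
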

As a result of $c$-cyclical monotonicity of $\text{Spt}(\lambda)$, we have the following applicable lemma.
\begin{lemma}\label{LEMCYC}
    If $\lambda\in\Pi(\mu,\nu,\gamma)$ is an optimal plan for the maximization problem associated with $c(x,y,z)=\langle x,y\rangle+\langle x,z\rangle+\langle y,z\rangle$, and $(x_1,y_1,z_1),\, (x_2,y_2,z_2)\in\text{Spt}(\lambda)$, then we have
    \begin{align}
        \langle x_1-x_2+y_1-y_2,z_1-z_2\rangle\geq 0.\label{MONOTON-INEQQQ}
    \end{align}
    \begin{proof}
        In fact, inequality \eqref{MONOTON-INEQQQ} can be stated as the following ones as well,
        \begin{align*}
                    \langle y_1-y_2+z_1-z_2,x_1-x_2\rangle\geq 0,
        \end{align*}
        or
\begin{align*}
                    \langle x_1-x_2+z_1-z_2,y_1-y_2\rangle\geq 0.
        \end{align*}
        To see \eqref{MONOTON-INEQQQ}, we simply use $c$-cyclical monotonicity of  $\text{Spt}(\lambda)$, that is,
        \begin{align*}
            c(x_1,y_1,z_1)+c(x_2,y_2,z_2)\geq c(x_1,y_1,z_2)+c(x_1,y_1,z_2),\quad\forall (x_k,y_k,z_k)\in\text{Spt}(\lambda),\ k=1,2.
        \end{align*}
        By cancelling out the terms $ \langle x_1,y_1\rangle$ and $ \langle x_2,y_2\rangle$ from both sides, we have
        \begin{align*}
            \langle x_1,z_1\rangle+\langle y_1,z_1\rangle+\langle x_2,z_2\rangle+\langle y_2,z_2\rangle\geq \langle x_1,z_2\rangle +\langle y_1,z_2\rangle+\langle x_2,y_2\rangle+\langle x_2,z_1\rangle+\langle y_2,z_1\rangle.
            \end{align*}   
By re-arranging the terms, we have
\begin{align*}
            \langle x_1,z_1\rangle-\langle x_1,z_2\rangle+\langle x_2,z_2\rangle-\langle x_2,z_1\rangle+\langle y_1,z_1\rangle-\langle y_1,z_2\rangle+\langle y_2,z_2\rangle-\langle y_2,z_1\rangle\geq 0,
            \end{align*}
from which the result follows.    
    \end{proof}
\end{lemma}

\noindent
\textbf{Proof of Theorem \ref{3MARG-QUADTHEOREM}.}
The proof follows from Theorems \ref{MAINTHEOREM} and \ref{MAINTHEOREMCP}. To this end, we shall show that an optimal plan $\lambda$ of \eqref{3MARG-QUAD}, and its restriction $\lambda_1$ to $X\times Y$, are $(c,P)$ and $(c_1,Q)$-extreme respectively where $P=\{\mathcal{S}_l\}_{l=1}^L$ and $Q:=\{Y_1,Y_2\}$ are considered as Borel ordered partitions of $Z$ and $Y$. Consider the following set-valued functions which are associated with the problem \eqref{3MARG-QUAD}
\begin{align*}
 & F_\lambda :X\times Y\to 2^Z,\quad  F_\lambda(x,y)=\{z\in Z\ :\ (x,y,z)\in\text{Spt}(\lambda)\},\\
 &   i:\text{Dom}(F_\lambda)\to \{1,\ldots,L\},\quad i(x,y)=\min\{j \ :\ F_{\lambda}(x,y)\cap \mathcal{S}_j\neq \emptyset\},\\
 & f_{\lambda,c,P}:\text{Dom}(F_\lambda)\to 2^{Z},\quad f_{\lambda,c,P}(x,y)=\text{argmax}\big\{c(x,y,z)\ :\ z\in F_\lambda(x,y)\cap\mathcal{S}_{i(x,y)}\big\}.
\end{align*}
 Recall that to show $\lambda$ is $(c,P)$-extreme we consider the function $c$ as a function on $(X\times Y)\times Z$. We now claim that
\begin{enumerate}
    \item\label{1-ext} For any $\lambda_1$-full measure subset $M_0\subseteq X\times Y$, we have
    \begin{align*}
         \text{Dom}(F_\lambda)\cap M_0=\text{Dom}(f_{\lambda,c,P})\cap M_0.
    \end{align*}

    \item\label{2-ext} For all different $(x,y),(x',y')\in\text{Dom}(F_\lambda)$,
\begin{align*}
    (F_\lambda(x,y)\setminus\{z\})\cap (F_\lambda(x',y')\setminus\{z'\})\cap Z_0=\emptyset,\quad \forall z\in f_{\lambda, c,P}(x,y),\ \forall z'\in f_{\lambda, c,P}(x',y'),
\end{align*}
where the $\gamma$-full measure subset $Z_0$ comes from Lemma \ref{ext-Gen}.
\end{enumerate}
Note that for each $(x,y)\in \text{Dom}(F_\lambda)$, the set $F_\lambda(x,y)$ is closed and for every $l\in\{1,\ldots,L\}$ the set $\mathcal{S}_l$ is compact. Thus, by continuity of $c$ we have 
\begin{align*}
    \text{argmax}\big\{c(x,y,z)\ :\ z\in F_\lambda(x,y)\cap\mathcal{S}_{i(x,y)}\big\}\neq \emptyset,
\end{align*}
which implies the case \eqref{1-ext}. To see \eqref{2-ext}, let it be otherwise and let
\begin{align*}
    z^*\in (F_\lambda(x,y)\setminus\{z\})\cap (F_\lambda(x',y')\setminus\{z'\})\cap Z_0.
\end{align*}
for some distinct $(x,y)$ and $(x',y')$ in $\text{Dom}(F_\lambda)$ and $z\in f_{\lambda,c,P}(x,y)$ and $z'\in f_{\lambda,c,P}(x',y')$. Therefore,
\begin{align*}
    (x,y,z), (x',y',z'), (x,y,z^*),(x',y',z^*)\in\text{Spt}(\lambda).
\end{align*}
Without loss of generality let $i(x,y)\leq i(x',y')$ and $z^*\in\mathcal{S}_j$, for some $j$. By the definition of the function $i$ we conclude that $j\geq i(x',y')$. Moreover, we have $z, z'\in \Omega_j\cup\mathcal{S}_j$, from which by strict convexity of $\Omega_j\cup\mathcal{S}_j$, we get
\begin{align}\label{strconv}
    &\langle \mathrm{n}(z^*), z^*-z\rangle > 0,\quad \text{and}\quad\langle \mathrm{n}(z^*), z^*-z'\rangle > 0.
\end{align}
By Lemma \ref{ext-Gen}, since we have $(x,y,z^*),(x',y',z^*)\in\text{Spt}(\lambda)$, there exists $\alpha\in\mathbb{R}$ so that 
\begin{align*}
\mathrm{n}(z^*)=\alpha(y'-y+x'-x).
\end{align*}
By substituting this into \eqref{strconv}, we obtain that
\begin{align*}
    &\alpha\langle y'-y+x'-x, z^*-z\rangle>0,\quad \text{and}\quad -\alpha\langle y-y'+x-x', z^*-z'\rangle>0.
\end{align*}
On the other hand, since
\begin{align*}
\{(x,y,z),(x',y',z^*)\}, \{(x',y',z'),(x,y,z^*)\}\subseteq\text{Spt}(\lambda),
\end{align*}
by Lemma \ref{LEMCYC} it can be concluded that
\begin{align*}
    &\langle y'-y+x'-x, z^*-z\rangle \geq 0,\quad \text{and}\quad \langle y-y'+x-x', z^*-z'\rangle \geq 0.
\end{align*}
This implies that $\alpha$ should be both positive and negative which is a contradiction. Hence, \eqref{2-ext} is satisfied and consequently, $\lambda$ is $(c,P)$-extreme.\\
Now, we do the same for $\lambda_1$. Recall the function $c_1$ from \eqref{c1-3MARGQUAD} and the reduced two-marginal optimization problem \eqref{c1-REDUC-PROB}. By Proposition \ref{PROP-3}, we already know that $\lambda_1$ is  an optimal plan of this problem. To apply Theorem \ref{MAINTHEOREMCP} we define the following set-valued functions to prove $(c_1,Q)$-extremality of $\lambda_1$,
\begin{align*}
    &F_{\lambda_1}:X\to 2^Y,\quad F_{\lambda_1}(x)=\{y\ : \ (x,y)\in\text{Spt}(\lambda_1)\},\\
    &i_1:\text{Dom}(F_{\lambda_1})\to\{1,2\},\quad i_1(x)=\min\left\{k\ :\ F_{\lambda_1}(x)\cap Y_k\neq\emptyset \right\},\\
    &f_{\lambda_1,c_1,Q}:X\to 2^Y,\quad f_{\lambda_1,c_1,Q}(x)=\argmax\{c_1(x,y)\ :\ y\in F_{\lambda_1}(x)\cap Y_{i_1(x)}\}.
\end{align*}
Set
\begin{align*}
    & X_0=\text{Dom}(D\phi_1),\quad  Y_0=Y_1\cup \text{Dom}(D\phi_2|_{Y_2}).
\end{align*}
Since $\mu$ and $\nu_2$ are absolutely continuous with respect to Lebesgue measure on $\mathbb{R}^{n-1}$ and the functions $\phi_1$ and $\phi_2$ are locally Lipschitz (on $\mathbb{R}^n$, and therefore on $\mathbb{R}^{n-1}$), hence, they are $\mu$- and $\nu_2$-a.e. differentiable, respectively. Therefore, 
\begin{align*}
    \mu(X_0)=\nu(Y_0)=1.
\end{align*}
To establish our desired result, we shall verify the two following conditions,
\begin{enumerate}
    \item We have
\begin{align*}
\text{Dom}(F_{\lambda_1})\cap X_0 =\text{Dom}(f_{\lambda_1,c_1,Q})\cap X_0.
\end{align*}
\item For any distinct $x,x'\in \text{Dom}(F_{\lambda_1})\cap X_0$, we have
\begin{align*}
    F_{\lambda_1}(x)\backslash\{y\}\cap F_{\lambda_1}(x')\backslash\{y'\}\cap Y_0=\emptyset,\quad \forall y\in f_{\lambda_1,c_1,Q}(x),\ \forall y'\in f_{\lambda_1,c_1,Q}(x').  
\end{align*} 
\end{enumerate}
The first condition holds true because $\mu$ and $\nu$ are compactly supported and $c_1$ is continuous.
Before going further, we note that for a fixed element $(x_0,y_0)\in \text{Spt}(\lambda_1)$, the function $x\mapsto c_1(x,y_0)$ is differentiable at $x_0\in \text{Dom}(F_{\lambda_1})\cap X_0$. This is due to the fact that the function $\tilde{c}_1(x,y)=\langle x,y\rangle$ is $\mu$-a.e. differentiable with respect to the first variable, and we have
\begin{align*}
    \psi(x+y)\leq \phi_1(x)+\phi_2(y)-\tilde{c}_1(x,y),
\end{align*}
and equality holds for $(x,y)\in\text{Spt}(\lambda_1)$, particularly for $(x_0,y_0)$. Then the differentiability of $x\mapsto \psi(x+y_0)$ at $x_0$ follows by Lemma \ref{LEMDIFF}, and hence, $D_{x}c_1$ exists at $(x_0,y_0)$. To prove the second condition let it be otherwise and 
\begin{align*}
  y^*\in  F_{\lambda_1}(x)\backslash\{y\}\cap F_{\lambda_1}(x')\backslash\{y'\}\cap Y_0,
\end{align*} 
for some distinct $x,x'\in\text{Dom}(F_{\lambda_1})\cap X_0$, and  $y\in f_{\lambda_1,c_1,Q}(x)$ and $y'\in f_{\lambda_1,c_1,Q}(x')$. Then we have
\begin{align*}
    (x,y),(x,y^*),(x',y'),(x',y^*)\in\text{Spt}(\lambda_1).
\end{align*}
We have two cases based on the position of $y^*$.
\begin{itemize}
    \item Case 1: $y^*\in Y_1$. In this case, by the definition of $f_{\lambda_1,c_1,Q}(x)$, we have that $y,y'\in Y_1$, as well. We note that since two planes $X$ and $Y_1$ are parallel, then we have
\begin{align}\label{ORTHO}
    \langle\mathrm{n}(x),y^*-y\rangle=0 .
\end{align}
On the other hand, since $(x,y),(x,y^*)\in\text{Spt}(\lambda_1)$, then by the existence of $D_xc_1(x,y)$ and $D_xc_1(x,y^*)$, we have
\begin{align}\label{DEFMONO}
    \mathrm{n}(x)=\alpha (y^*-y+D\psi(x+y^*)-D\psi(x+y)),
\end{align}
for some $\alpha\in\mathbb{R}$. Now substituting \eqref{DEFMONO} into \eqref{ORTHO} yields that $y=y^*$, which is a contradiction.

\item Case 2: $y^*\in Y_2$. In this case the derivative of $\phi_2(y^*)$ exists, from which we obtain
\begin{align}\label{DEFMONO22}
    \mathrm{n}(y^*)=\alpha (x-x'+D\psi(x+y^*)-D\psi(x'+y^*)),
\end{align}
for some $\alpha\in\mathbb{R}$. But similar to \eqref{ORTHO}, we have
\begin{align}\label{ORTHO22}
    \langle\mathrm{n}(y^*),x-x'\rangle=0.
\end{align}
\eqref{DEFMONO22} and \eqref{ORTHO22} imply that $x=x'$, which is a contradiction.
\end{itemize}
Hence, $\lambda_1$ is $(c_1,Q)$-extreme and by applying Theorem  \ref{MAINTHEOREMCP} the proof is completed.
\hfill \qedsymbol{}

\begin{remark}
\begin{enumerate}
    \item 
In Example \ref{EX-NON-UNIQ} we saw that if the three-marginal problem \eqref{EXEQ1} has a solution induced by a transport map then the solution is not unique. On the other hand, Theorem \ref{3MARG-QUADTHEOREM}  yields that \eqref{EXEQ1} has a unique solution. Therefore, it can be deduced that the problem \eqref{EXEQ1} has a unique solution which is not induced by a transport map. This actually distinguishes the Theorem \ref{3MARG-QUADTHEOREM} from Gangbo-\'Swi\c ech result. By the proof of Theorem \ref{3MARG-QUADTHEOREM} the measure $\lambda_1$, the restriction of the unique optimal plan $\lambda\in\Pi(\mu,\mu,\gamma)$ on $X\times Y$ is induced by a unique optimal map $T:X\to Y$ and the argument in Example \ref{EX-NON-UNIQ} shows that $T=\text{id}$. A careful examination reveals that the quadratic function in Example \ref{EX-NON-UNIQ} satisfies the $2$-twist condition. In fact by recalling Definition \ref{DEF-2-TWIST}, let $(x_0,x_0,z_0)\in\text{Spt}(\lambda)$ and $z\in L(x_0,x_0,z_0)$. Then there exists $\alpha\in\mathbb{R}$ such that
\begin{align*}
    z=z_0+\alpha\mathrm{n}(x_0).
\end{align*}
Hence, the element $z$ lives on the straight line passing through $z_0$ in the direction of $\mathrm{n}(x_0)$ which intersects $Z$ in at most two points. This shows that the function $c$ satisfies $2$-twist condition from which we conclude the existence of the following measurable maps
\begin{align*}
    \alpha :X\to [0,1],\quad G_1 :X\to Z,\quad\text{and}\quad G_2:X\to Z,
\end{align*}
such that
\begin{align*}
\lambda=(\text{id}\times \text{id}\times G_1)\#(\alpha\mu)+(\text{id}\times \text{id}\times G_2)\#((1-\alpha)\mu).
\end{align*}
\item It can be shown that the unique optimal plan $\lambda$ in Theorem \ref{3MARG-QUADTHEOREM} is concentrated on the union of the graph of $4L$ functions $T_j:X\to Y\times Z$, with $T_j=(S_{j},G_{j})$, where $S_{j}:X\to Y$ and $G_{j}:X\to Z$ as the cost function $c$ satisfies $4L$-twist condition. Hence, $\lambda$  is of the following form
\begin{align*}
    \lambda=\left(\sum_{j=1}^{4L}\alpha_j(x)\delta_{T_j(x)}\right)\otimes\mu=\left(\sum_{j=1}^{4L}\alpha_j(x)\delta_{S_{j}(x)}\times \delta_{G_{j}(x)}\right)\otimes\mu,
\end{align*}
where $\alpha_j:X\to [0,1]$ are measurable map as it is described in Definition \ref{DEF-MULTI-GRAPH} and Theorem \ref{MOMEN-CHAR-TH}.
\end{enumerate}
\end{remark}

\subsection{Combination of the Monge   and  the quadratic cost function}
For two Borel probability measures $\mu$ and $\nu$ on $X$ which is the closure of a bounded convex open subset of $\mathbb{R}^n$, the Monge problem which was initiated in \cite{MONGE}, is as follows
\begin{align}\label{MONGE-SECTION5}
    \min\bigg\{\int_X |x-T(x)|\,d\mu(x)\ :\  T\in \Lambda(\mu,\nu),\bigg\},
\end{align}
where $\Lambda(\mu,\nu)$ denotes the set of measurable transport maps that push forward $\mu$ to $\nu$. Due to non-linearity of the aforementioned target functional with respect to the variable $T$ and lack of proper compactness of the set $\Lambda(\mu,\nu)$ one may not expect the existence of a solution under mild assumption. By additional regularity hypothesis on $\mu$ and $\nu$, existence of a solution to \eqref{MONGE-SECTION5} is proven for the case of Lipschitz-continuous densities with respect to the Lebesgue measure $\mathcal{L}^n$ for $\mu$ and $\nu$ in \cite{EVANSGANGBO}, and integrable densities in \cite{AMBROSIO2} and \cite{TRUDINGERWANG}. A counter example in dimension $n=2$ is given in \cite{AMBROSIOCAFFARELLIBRENIERBUTTAZZOVILLANI} which shows that for any $s\in (1,2)$ there  exists a marginal $\mu$ which is absolutely continuous with respect to $\mathcal{H}^t$, the Hausdorff measure of dimension $t$ in $\mathbb{R}^2$, for any $t<s$ and is such that the Monge problem \eqref{MONGE-SECTION5} does not have a solution. 

In a more general setting, when $\mathbb{R}^n$ is equipped with an arbitrary norm $\|\cdot\|$, the problem \eqref{MONGE-SECTION5} has been studied for the cost function $\|x-T(x)\|$, by examining strict convexity and $C^2$-uniform convexity assumptions on the norm $\|\cdot\|$ or only under absolute continuity of the first marginal $\mu$ with respect to the Lebesgue measure on $\mathbb{R}^n$, see \cite{AMBROSIOKIRCHHEIMPRATELLI, CAFFARELLIFELDMANMCCANN, CARAVENNA, CHAMPIONDEPASCALE4, CHAMPIONDEPASCALE2, CHAMPIONDEPASCALE3, SUDAKOV}.

In this subsection, we consider the cost function which is constructed by adding the Monge cost to the quadratic one, 
\begin{align*}
    \quad c(x,y,z)=|x-y|+|x-z|^2+|y-z|^2.
\end{align*}
The associated three-marginal Monge-Kantorovich problem is given by
\begin{align}\label{MONG-QUAD}
        \inf_{\lambda\in\Pi(\mu,\nu,\gamma)}\int (|x-y|+|x-z|^2+|y-z|^2)\,d\lambda.
\end{align}
 Next theorem is our result regarding the 3-marginal problem \eqref{MONG-QUAD}.

\begin{theorem}
Let $X$, $Y$ and $Z$ be compact subsets of $\mathbb{R}^n$ which are supports of Borel probability measures $\mu$, $\nu$ and $\gamma$, respectively and $X\cap Y=\emptyset$. Assume that $\mu$ and $\gamma$ are absolutely continuous with respect to Lebesgue measure on $\mathbb{R}^n$. Then the problem \eqref{MONG-QUAD} admits a unique solution which is concentrated on the graph of a unique optimal transport map $T:X\to Y\times Z$.
\begin{proof}
To prove the claim, we  shall apply Theorem \ref{MAINTHEOREM}. Let 
\begin{align*}
    \phi_1\in L_1(X,\mu),\quad \phi_2\in L_1(Y,\nu),\quad\text{and}\quad\phi_3\in L_1(Z,\gamma).
\end{align*}
be the solution of the dual-Kantorovich problem to \eqref{MONG-QUAD}. Using the potential $\phi_3$, we define the function $c_1:X\times Y\to\mathbb{R}$ as follows
\begin{align*}
    &c_1(x,y)=|x-y|+\inf_{z\in Z}\big\{|x-z|^2+|y-z|^2-\phi_3(z)\big\}.
\end{align*}
We may rewrite $c_1$ as
\begin{align}\label{c_1MONGQUAD}
    &c_1(x,y)=\tilde{c}_1(x,y)-\psi(x+y),
\end{align}
where
\begin{align}\label{c_1MONGQUAD-SIDE}
    &\tilde{c}_1(x,y)=|x-y|+|x|^2+|y|^2,\quad \text{and}\quad \psi(x+y)=\sup_{z\in Z}\big\{2\langle x+y,z\rangle+|z|^2 -\phi_3(z)\big\}.
\end{align}
We shall now consider the reduced two-marginal optimization problem,
\begin{align}\label{C_1MONGQUAD-OPT}
    \inf_{\tau\in\Pi(\mu,\nu)}\int ( \tilde{c}_1(x,y)-\psi(x+y))\,d\tau .
\end{align}
We show that an optimal plan $\lambda$ of \eqref{MONG-QUAD} and its restriction $\lambda_1$ on $X\times Y$ are $c$ and $c_1$-extreme, respectively, where $c_1$ is given by \eqref{c_1MONGQUAD}. In fact, the sketch of the proof is as follows: first we show that there exists a unique map $T_0:X\times Y\to Z$ for which we have $\lambda=(\text{id}\times T_0)\#\lambda_1$ and then we prove that $\lambda_1=(\text{id}\times T_1)\#\mu$, for a unique optimal transport map $T_1:X\to Y$. At this aim, consider the two set-valued maps associated with $\lambda$ and $\lambda_1$,
\begin{align*}
    &F_{\lambda}:X\times Y\to 2^Z,\quad F_{\lambda}(x,y)=\{z\in Z\ : \ (x,y,z)\in\text{Spt}(\lambda)\},\\
     &  F_{\lambda_1}:X \to 2^Y,\quad F_{\lambda_1}(x)=\{y\in Y\ : \ (x,y)\in\text{Spt}(\lambda_1)\}.
\end{align*}
We note that since the marginals are compactly supported, then the two measures $\lambda$ and $\lambda_1$ have compact supports, as well. This implies that 
\begin{align}\label{SUPP-REL}
\pi_{XY}(\text{Spt}(\lambda))=\text{Spt}(\lambda_1), \quad\text{where}\quad\pi_{XY}:X\times Y\times Z\to X\times Y,
\end{align}
from which we have
\begin{align*}
    \pi_X(\text{Dom}(F_{\lambda}))=\text{Dom}(F_{\lambda_1}),\quad\text{where}\quad\pi_{X}:X\times Y\to X.
\end{align*}
Since $\mu$ is absolutely continuous with respect to Lebesgue measure, the potential $\phi_1$ is $\mu$-a.e. differentiable. Put $X_0=\text{Dom}(D\phi_1)$. We shall show that
\begin{enumerate}
    \item\label{SINGLAM}
for all $(x,y)\in \text{Dom}(F_\lambda)\cap(X_0\times Y)$, the set $F_\lambda(x,y)$ is a singleton,
    \item\label{SINGLAM1}
    for all $x\in \text{Dom}(F_{\lambda_1})\cap X_0 $, the set $F_{\lambda_1}(x)$ is singleton.
\end{enumerate}
To see \eqref{SINGLAM}, if  $z,z^*\in F_\lambda(x,y)$, then we have 
\begin{align*}
    D_xc(x,y,z)=D\phi_1(x)=D_xc(x,y,z^*),
\end{align*}
from which we get
\begin{align*}
    \frac{x-y}{|x-y|}+2x-2z=  \frac{x-y}{|x-y|}+2x-2z^*,
\end{align*}
and consequently, $z=z^*$. By having \eqref{SINGLAM}, it is obtained that there exists a unique measurable map $T_0:X\times Y\to Z$, such that $\lambda$ is supported on the graph of this map. \\
To prove \eqref{SINGLAM1}, first we observe that for $(x_0,y_0)\in\text{Spt}(\lambda_1)$ the function $x\mapsto \psi(x+y_0)$ is differentiable at $x_0$. Indeed, since the potentials $\phi_1$ and $\phi_2$ solves the dual problem to \eqref{C_1MONGQUAD-OPT}, we have
\begin{align*}
    c_1(x,y)\geq \phi_1(x)+\phi_2(y), \quad\text{with equality on}\; \text{Spt}(\lambda_1),
\end{align*}
or equivalently,
\begin{align*}
    \psi(x+y)\leq \tilde{c}_1(x,y)-\phi_1(x)-\phi_2(y), \quad\text{with equality on}\; \text{Spt}(\lambda_1).
\end{align*}
Now, by applying Lemma \ref{LEMDIFF} to the functions 
\begin{align*}
f(x)=\psi(x+y_0)\quad\text{and} \quad u(x)=\tilde{c}_1(x,y_0)-\phi_1(x)-\phi_2(y_0),
\end{align*}
due to $\mu$-almost everywhere differentiability of $\phi_1$ and existence of $D_x\tilde{c}_1(x_0,y_0)$ we conclude that $D\psi(x_0+y_0)$ exists. On top of that, since $c_1=\tilde{c}_1+\psi$, existence of $D_xc_1(x_0,y_0)$ is obtained. Therefore, 
\begin{align}\label{OO2}
    D_xc_1(x_0,y_0)=\frac{x_0-y_0}{|x_0-y_0|}+2x_0-D\psi(x_0+y_0).
\end{align}
Moreover, it can be easily shown that for every $(x_0,y_0)\in X\times Y$, we have
\begin{align}\label{OO3}
    \forall z_0\in\text{argmax}\big(2\langle x_0+y_0,z\rangle+|z|^2-\phi_3(z)\big)\quad\Longrightarrow\quad 2 z_0\in\partial \psi(x_0+y_0).
\end{align}
Let $x\in\text{Dom}(F_{\lambda_1})\cap X_0$. If there exist $y,y^*\in F_{\lambda_1}(x)$, then by the definition of $F_{\lambda_1}$, it means that
\begin{align}\label{OO1}
    (x,y),(x,y^*)\in\text{Spt}(\lambda_1)
\end{align}
Consequently, via \eqref{SUPP-REL}, there exist $z,z^*\in Z$ such that
\begin{align*}
    (x,y,z),(x,y^*,z^*)\in\text{Spt}(\lambda).
\end{align*}
Without loss of generality, here we can assume that $z,z^*\in Z_0=\text{Dom}(D\phi_3)$. This is due to the fact that through absolute continuity of $\gamma$, the potential $\phi_3$ is $\gamma$-almost everywhere differentiable which yields that $Z_0$ is a $\gamma$-full measure subset of $Z$.
From \eqref{OO1}, we get
\begin{align*}
    D_xc_1(x,y)=D\phi_1(x)=D_xc_1(x,y^*),
\end{align*}
and via \eqref{OO2} it is obtained that
\begin{align*}
    \frac{x-y}{|x-y|}-D\psi  (x+y)=\frac{x-y^*}{|x-y^*|}-D\psi  (x+y^*).
\end{align*}
By rearranging the above equality and multiplying by $y-y^*$, we have
\begin{align*}
0\geq\langle \frac{y^*-x}{|y^*-x|} -\frac{y-x}{|y-x|},y-y^*\rangle=\langle D\psi  (x+y)-D\psi  (x+y^*),y-y^*\rangle\geq 0.
\end{align*}
Hence,
\begin{align*}
    \langle D\psi  (x+y)-D\psi  (x+y^*),y-y^*\rangle= 0,
\end{align*}
and together with the fact that $D\psi  (x+y)=2z$ and $D\psi  (x+y^*)=2z^*$, we have
\begin{align}\label{EQQQ}
    \langle z-z^*,y-y^*\rangle= 0.
\end{align}
On the other hand, since the supremum in the definition of $\psi (x+y)$ happens at $z$ (similarly for $\psi (x+y^*)$ and $z^*$) we have that
\begin{align}
    &2\langle x+y,z\rangle+|z|^2 -\phi_3(z)\geq 2\langle x+y,z^*\rangle+|z^*|^2 -\phi_3(z^*),\label{OO4}\\
    &2\langle x+y^*,z^*\rangle+|z^*|^2 -\phi_3(z^*)\geq 2\langle x+y^*,z\rangle+|z|^2 -\phi_3(z).\label{OO5}
\end{align}
Adding these two inequalities yields that
\begin{align*}
     \langle z-z^*,y-y^*\rangle\geq 0.
\end{align*}
Since \eqref{EQQQ} holds, then we find out that the aforementioned inequalities \eqref{OO4} and \eqref{OO5} are indeed equalities. This yields that
\begin{align*}
    z^*\in\text{argmax}\big(2\langle x+y ,z\rangle+|z|^2-\phi_3(z)\big).
\end{align*}
Therefore, via \eqref{OO3}, it is obtained that
\begin{align*}
2z^*\in\partial\psi (x+y)=\{D\psi (x+y)\}=\{2z\},
\end{align*}
Hence, $z=z^*$. This implies that
\begin{align*}
    (x,y,z),\; (x,y^*,z)\in\text{Spt}(\lambda).
\end{align*}
Moreover, since $z\in Z_0$, then we have
\begin{align*}
    D_zc(x,y,z)=D\phi_3(z)=D_zc(x,y^*,z),
\end{align*}
from which we obtain that
\begin{align*}
    4z-2(x+y)=4z-2(x+y^*),
\end{align*}
and consequently, $y=y^*$. Thus, the set $F_{\lambda_1}(x)$ is a singleton and there exists a unique map $T_1:X\to Y$ for which we have $\lambda_1=(\text{id}\times T_1)\#\mu$. Now, if one defines
\begin{align*}
    T:X\to Y\times Z,\quad T(x)=\big(T_1(x),(T_0(x,T_1(x))\big),
\end{align*}
then it can be deduced that
\begin{align*}
    \lambda=(\text{id}\times T)\#\mu,
\end{align*}
which is the unique solution to the three-marginal Monge problem.

\end{proof}
\end{theorem}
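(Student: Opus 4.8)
The plan is to invoke Theorem~\ref{MAINTHEOREM} in the case $N=3$, for which the only intermediate restriction to control is $\lambda_1=\pi_{XY}\#\lambda$; thus the whole argument reduces to two $c$-extremality statements: that an arbitrary optimal plan $\lambda$ of \eqref{MONG-QUAD}, viewed as a measure on $(X\times Y)\times Z$, is $c$-extreme, and that $\lambda_1$ — which by Proposition~\ref{PROP-3} is optimal for the reduced two-marginal problem \eqref{C_1MONGQUAD-OPT} with cost $c_1$ — is $c_1$-extreme. Since $X,Y,Z$ are compact and $c$ is continuous, an optimal plan exists and Kantorovich duality provides potentials $\phi_1\in L_1(X,\mu)$, $\phi_2\in L_1(Y,\nu)$, $\phi_3\in L_1(Z,\gamma)$ with $\phi_1+\phi_2+\phi_3\le c$ and equality on $\text{Spt}(\lambda)$; these are exactly the potentials used to build $c_1$. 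Two structural facts drive everything: because $X\cap Y=\emptyset$ with $X,Y$ compact, $|x-y|$ is smooth near $X\times Y$, so $D_xc$ and $D_xc_1$ exist there; and $|x-z|^2+|y-z|^2=|x|^2+|y|^2+2|z|^2-2\langle x+y,z\rangle$, whence $c_1(x,y)=|x-y|+|x|^2+|y|^2-\psi(x+y)$ with $\psi$ convex in the single variable $x+y$.

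For the first claim I would show that $F_\lambda(x,y):=\{z:(x,y,z)\in\text{Spt}(\lambda)\}$ is a singleton for $\lambda_1$-a.e.\ $(x,y)$. As $\mu\ll\mathcal{L}^n$ and $\phi_1$ is locally Lipschitz, it is differentiable on a $\mu$-full set $X_0$; for $x\in X_0$ and $z,z^*\in F_\lambda(x,y)$ the first-order condition at the contact points gives $D_xc(x,y,z)=D\phi_1(x)=D_xc(x,y,z^*)$, and since $D_xc(x,y,z)=\frac{x-y}{|x-y|}+2(x-z)$ this forces $z=z^*$. Hence $\lambda$ is $c$-extreme and, in particular, $\lambda=(\text{id}\times T_0)\#\lambda_1$ for a measurable $T_0:X\times Y\to Z$.

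For the second (and harder) claim I would show $F_{\lambda_1}(x):=\{y:(x,y)\in\text{Spt}(\lambda_1)\}$ is a singleton for $\mu$-a.e.\ $x$. First, for $(x_0,y_0)\in\text{Spt}(\lambda_1)$ the map $x\mapsto\psi(x+y_0)$ is differentiable at $x_0$ (for $x_0\in X_0$): it is squeezed between the minorant it equals on the support and the majorant $|x-y_0|+|x|^2+|y_0|^2-\phi_1(x)-\phi_2(y_0)$, which is differentiable at $x_0$, so Lemma~\ref{LEMDIFF} applies; thus $D_xc_1(x_0,y_0)=\frac{x_0-y_0}{|x_0-y_0|}+2x_0-D\psi(x_0+y_0)$ exists. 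If $y,y^*\in F_{\lambda_1}(x)$ with $x\in X_0$, equating $D_xc_1(x,y)=D\phi_1(x)=D_xc_1(x,y^*)$, rearranging and pairing with $y-y^*$ gives
\begin{align*}
0\ \ge\ \Big\langle \tfrac{y^*-x}{|y^*-x|}-\tfrac{y-x}{|y-x|},\,y-y^*\Big\rangle\ =\ \langle D\psi(x+y)-D\psi(x+y^*),\,y-y^*\rangle\ \ge\ 0,
\end{align*}
the left inequality by monotonicity of the gradient of $u\mapsto|u-x|$ and the right by convexity of $\psi$; hence both are equalities. Writing $D\psi(x+y)=2z$, $D\psi(x+y^*)=2z^*$ through the envelope relation (a maximizer $z$ for $\psi(x+y)$ has $2z\in\partial\psi(x+y)$), this reads $\langle z-z^*,y-y^*\rangle=0$; feeding it back into the two maximality inequalities defining $\psi(x+y)$ and $\psi(x+y^*)$ turns them into equalities, so $z^*$ is also a maximizer for $\psi(x+y)$, whence $2z^*\in\partial\psi(x+y)=\{2z\}$, i.e.\ $z=z^*$. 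Then $(x,y,z),(x,y^*,z)\in\text{Spt}(\lambda)$ with $z$ in the $\gamma$-full differentiability set $Z_0$ of $\phi_3$ (using $\gamma\ll\mathcal{L}^n$), and the first-order condition in $z$, namely $4z-2(x+y)=D\phi_3(z)=4z-2(x+y^*)$, gives $y=y^*$. So $\lambda_1=(\text{id}\times T_1)\#\mu$ for a measurable $T_1:X\to Y$, and $\lambda_1$ is $c_1$-extreme.

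Theorem~\ref{MAINTHEOREM} then yields uniqueness of the optimal plan of \eqref{MONG-QUAD}, and setting $T(x)=\big(T_1(x),T_0(x,T_1(x))\big)$ gives $\lambda=(\text{id}\times T)\#\mu$, the desired Monge solution. I expect the main obstacle to be the second step: the differentiability of $\psi(\cdot+y_0)$ and, above all, the coupled inequalities forcing first $z=z^*$ and then $y=y^*$ — the cross term $\langle z-z^*,y-y^*\rangle$ must be squeezed to zero simultaneously by the norm term and by the quadratic/$\psi$ term — together with the mild measure-theoretic care needed to take the representative $z$ in $Z_0$, which is a $\lambda$-almost-everywhere rather than pointwise property of $\text{Spt}(\lambda)$.
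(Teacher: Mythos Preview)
Your proposal is correct and follows essentially the same route as the paper: the same reduction via Theorem~\ref{MAINTHEOREM}, the same twist-in-$z$ argument for $F_\lambda(x,y)$ to be a singleton, the same use of Lemma~\ref{LEMDIFF} to obtain differentiability of $\psi(\cdot+y_0)$, the same monotonicity sandwich forcing $\langle z-z^*,y-y^*\rangle=0$ and the subsequent argmax/subdifferential step giving $z=z^*$, and the same $D_z$ first-order condition yielding $y=y^*$. Your closing caveat about selecting $z,z^*\in Z_0$ is exactly the point where the paper also argues informally (``without loss of generality''); it is a $\lambda$-a.e.\ adjustment of the support rather than a pointwise one, but this does not affect the conclusion.
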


\subsection{Gangbo-\'Swi\c ech  Theorem}
An immediate consequence of Theorem \ref{BIGTH} is the result obtained by Gangbo and \'Swi\c ech \cite{GANGBOSWIECH}.  Here, we state their result together with some other results related to the quadratic cost. Indeed, in this subsection we study the minimization problem associated with the cost function
\begin{align*}
c(x_1,\ldots,x_N)=\frac{1}{2}\sum_{i=1}^{N-1}\sum_{j=i+1}^N |x_i-x_j|^2,
\end{align*}
or equivalently, maximization problem corresponding to
\begin{align*}
c(x_1,\ldots,x_N)=\sum_{i=1}^{N-1}\sum_{j=i+1}^N \langle x_i,x_j\rangle,
\end{align*}
which are known in some terminology as \textit{attractive} and \textit{surplus} cost functions, respectively. In \cite{PASSVARGAS}, the authors investigated the surplus cost function via graph theory. They selected various choices of $\langle x_i,x_j\rangle$ which led to the existence of the solutions of Monge problem. Another cost function which has captivated researchers' attention is the \textit{repulsive} cost 
\begin{align*}
c(x_1,\ldots,x_N)=-\frac{1}{2}\sum_{i=1}^{N-1}\sum_{j=i+1}^N |x_i-x_j|^2.
\end{align*}
A characterization of the support of the minimizing optimal plans for multi-marginal Monge-Kantorovich problem associated with the repulsive cost is obtained in \cite{DIMARINOGELORINNENNA}. In fact, it is established that if a plan is concentrated on some hyperplane
\begin{align*}
\{(x_1,\ldots,x_N)\ :\ x_1+\cdots+x_N=k\},
\end{align*}
then it is a minimizing optimal plan. Particularly, in \cite{GELORINKAUSAMORAJALA}, for the case $N=3$, through defining specific marginals, it is shown that the minimization problem associated to the repulsive cost function admits a unique solution which is not concentrated on the graph of a function. However, for the case $N=2$, both attractive and repulsive cost functions admit a unique solution obtained by a transport map (see \cite{BRENIER,GELORINKAUSAMORAJALA}). Consider the following optimization problem
\begin{align}\label{ATT-QUAD}
    \inf_{\lambda\in\Pi(\mu_1,\ldots,\mu_N)}\int\frac{1}{2} \sum_{i=1}^{N-1}\sum_{j=i+1}^N |x_i-x_j|^2\,d\lambda.
\end{align}
Now, we state the result obtained in \cite{GANGBOSWIECH}, where for a convex function $f:\mathbb{R}^n\to\mathbb{R}$, the Legendre transform of $f$ is denoted by $f^*$.

\begin{theorem}[Gangbo-\'Swi\c ech]
For $k\in\mathcal{N}=\{1,\ldots,N\}$ let $\mu_k$ be Borel probability measure on $\mathbb{R}^n$ with finite second moment and vanishes on $(n-1)$-rectifiable set.  Then
\begin{enumerate}
    \item the dual problem to \eqref{ATT-QUAD} admits maximizers $u_k$ which are $\mu_k$-differentiable,
\item there exists a unique minimizer $\lambda$ to \eqref{ATT-QUAD} such that $\lambda=(\text{id}, T_2,\ldots, T_{N})\#\mu_1$,
and the maps $T_k: \mathbb{R}^n\to \mathbb{R}^n$ are defined by 
\begin{align*}
T_k(x_1) = Df^*_{k}\big(Df_1(x_1)\big),\quad k\in\mathcal{N}\backslash\{1\},
\end{align*}
where,
\begin{align*}
   &f_k(x_k)=\frac{1}{2}|x_k|^2+\phi_k(x_k),\quad\text{and}\quad \phi_k(x_k)=\frac{N-1}{2}|x_k|^2-u_k(x_k).
\end{align*}
\end{enumerate}
\end{theorem}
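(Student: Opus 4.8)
The plan is to deduce the theorem from Theorem~\ref{BIGTH}: pass to the surplus form of the cost, form for each $j\in\mathcal N\setminus\{1\}$ the reduced two--marginal cost $c_{1j}$ of \eqref{TWOMARGREDUC}, show by an explicit completion of squares that $c_{1j}$ is a ``twisted'' cost whose reduced problem has a unique Monge--type solution, and finally read off the formula for the $T_k$ from the $N$--marginal dual. For the \emph{setup and part (1)}: expanding $\tfrac12\sum_{i<j}|x_i-x_j|^2=\tfrac{N-1}{2}\sum_k|x_k|^2-\sum_{i<j}\langle x_i,x_j\rangle$ and dropping the term with fixed integral, \eqref{ATT-QUAD} is equivalent to maximizing $\int\sum_{i<j}\langle x_i,x_j\rangle\,d\lambda$. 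Kantorovich duality (valid since the marginals have finite second moment) yields maximizers $u_k$ of the dual of \eqref{ATT-QUAD}; writing $u_k(x_k)=\inf_{(x_l)_{l\ne k}}\{c(x)-\sum_{l\ne k}u_l(x_l)\}$ and noting that $x_k$ enters the bracket only through $\tfrac{N-1}{2}|x_k|^2$ and the linear term $-\langle x_k,\sum_{l\ne k}x_l\rangle$, one sees that $\tfrac{N-1}{2}|x_k|^2-u_k$ is a supremum of affine functions, hence convex. Thus $\phi_k:=\tfrac{N-1}{2}|x_k|^2-u_k$ is convex, $\sum_k\phi_k\ge\sum_{i<j}\langle x_i,x_j\rangle$ with equality on $\text{Spt}(\lambda)$, $f_k:=\tfrac12|x_k|^2+\phi_k$ is convex, and since convex functions are differentiable off an $(n-1)$--rectifiable set on which $\mu_k$ vanishes, $u_k$ (hence $\phi_k$, $f_k$) is $\mu_k$--differentiable, which is exactly (1).

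\emph{The reduced cost.} Fix $j\in\mathcal N\setminus\{1\}$. Using $\sum_{i<j}|x_i-x_j|^2=N\sum_k|x_k|^2-|\sum_k x_k|^2$ and substituting $u_k=\tfrac{N-1}{2}|x_k|^2-\phi_k$ in \eqref{GEN-CJ-function}--\eqref{TWOMARGREDUC}, a completion of squares in the variables $(x_k)_{k\ne1,j}$ gives
\[
c_{1j}(x_1,x_j)=\tfrac{N-1}{2}|x_1|^2+\tfrac{N-1}{2}|x_j|^2-\langle x_1,x_j\rangle+g(x_1+x_j),
\]
\[
g(s):=\inf_{(x_k)_{k\ne1,j}}\Big\{\sum_{k\ne1,j}f_k(x_k)-\big\langle s,\textstyle\sum_{k\ne1,j}x_k\big\rangle-\tfrac12\big|\textstyle\sum_{k\ne1,j}x_k\big|^2\Big\}.
\]
Since $\sum_k u_k\le c$ we have $c_{1j}(x_1,x_j)\ge u_1(x_1)+u_j(x_j)>-\infty$, so $g$ is finite and, being an infimum of affine functions of $s$, \emph{concave}.

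\emph{The reduced problem is Monge.} Let $\lambda_{1j}=\pi_{1j}\#\lambda$, which by Proposition~\ref{PROP-3} is optimal for \eqref{TWOMARGREDUC} with optimal potentials $u_1,u_j$, and recall $u_1$ is $\mu_1$--differentiable. If $(x_1,a),(x_1,b)\in\text{Spt}(\lambda_{1j})$, then $x_1$ minimizes both $x\mapsto c_{1j}(x,a)-u_1(x)$ and $x\mapsto c_{1j}(x,b)-u_1(x)$; at a differentiability point of $u_1$, the semiconcavity of $c_{1j}(\cdot,a)$ (inherited from $g$) forces $c_{1j}(\cdot,a)$ to be differentiable at $x_1$ with $D_{x_1}c_{1j}(x_1,a)=Du_1(x_1)$, and likewise for $b$, whence $-a+Dg(x_1+a)=-b+Dg(x_1+b)$; pairing with $a-b$ and using concavity of $g$ (so $\langle Dg(x_1+a)-Dg(x_1+b),a-b\rangle\le0$) yields $|a-b|^2\le0$, i.e. $a=b$. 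Hence $F_{\lambda_{1j}}(x_1)$ is a singleton for $\mu_1$--a.e. $x_1$: $\lambda_{1j}$ is induced by a transport map $T_j$, is $c_{1j}$--extreme, and so is the unique optimal plan of \eqref{TWOMARGREDUC} (Theorem~\ref{THc-EXT}, Remark~\ref{REM-NOTICE}). Since this holds for every $j\in\mathcal N\setminus\{1\}$, Theorem~\ref{BIGTH} gives that \eqref{ATT-QUAD} has a unique solution $\lambda=(\text{id},T_2,\dots,T_N)\#\mu_1$.

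\emph{The formula, and the main difficulty.} On $\text{Spt}(\lambda)$ the dual equality reads $\sum_k f_k(x_k)=\tfrac12|\sum_k x_k|^2$, with ``$\ge$'' holding everywhere; fixing all coordinates but the $k$--th, the map $y\mapsto f_k(y)-\tfrac12|y+\sum_{l\ne k}x_l|^2$ has a minimum at $y=x_k$, so at a differentiability point $Df_k(x_k)=\sum_l x_l$ for every $k$, and in particular $Df_k(x_k)=Df_1(x_1)$. Convex duality then gives $x_k\in\partial f_k^*(Df_1(x_1))$, and the regularity of $\mu_k$ makes $f_k^*$ differentiable at the relevant points, so $T_k(x_1)=Df_k^*(Df_1(x_1))$ with $f_k=\tfrac12|x_k|^2+\phi_k$ and $\phi_k=\tfrac{N-1}{2}|x_k|^2-u_k$, as claimed. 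I expect the genuinely delicate points to be the duality bookkeeping --- existence and $L^1$--integrability of the $u_k$, and finiteness/attainment in the infima defining $c_{1j}$ and $g$ --- together with tracking exactly where $g$, $f_k$, and $f_k^*$ are differentiable so that the first--order identities may legitimately be applied; the conceptual core, that concavity of $g$ renders the reduced cost twisted, is short.
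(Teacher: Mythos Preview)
Your proposal is correct and follows essentially the same route as the paper: reduce to the two--marginal problems \eqref{TWOMARGREDUC} via Proposition~\ref{PROP-3}, observe that the reduced cost $c_{1j}$ is (up to separable terms) of the form ``bilinear pairing plus a function of $x_1+x_j$'', use the convexity/concavity of that function together with differentiability of the potential $\phi_1$ (resp.\ $u_1$) to force $F_{\lambda_{1j}}(x_1)$ to be a singleton, and then invoke Theorem~\ref{BIGTH}. The only cosmetic differences are that the paper passes immediately to the surplus form \eqref{SURP-QUAD} and writes $c_{1j}(x_1,x_j)=\langle x_1,x_j\rangle+\psi_j(x_1+x_j)$ with $\psi_j$ \emph{convex}, whereas you stay in the attractive form and obtain the concave $g$; and the paper extracts the transport maps from the two--marginal equality $c_{1j}=\phi_1+\phi_j$ (yielding $T_j(x_1)=Du_j^*(x_1+D\phi_1(x_1))-x_1$ for $u_j(t)=\tfrac12|t|^2+\psi_j(t)$), while you read them off directly from the $N$--marginal identity $\sum_k f_k(x_k)=\tfrac12\big|\sum_k x_k\big|^2$, which matches the statement of the theorem verbatim.
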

To present our result, we consider the equivalent problem to \eqref{ATT-QUAD}, i.e.
\begin{align}\label{SURP-QUAD}
    \sup_{\lambda\in\Pi(\mu_1,\ldots,\mu_N)}\int \sum_{i=1}^{N-1}\sum_{j=i+1}^N \langle x_i,x_j\rangle\, d\lambda .
\end{align}
To present our proof, we impose absolute continuity assumption on the first marginal $\mu_1$, instead of vanishing on $(n-1)$-rectifiable sets. However, with the latter assumption, our proof would work as well.
\begin{theorem}
    Let $\{(\mu_k)\}_{k\in\mathcal{N}}$ be family of Borel probability measures on $\mathbb{R}^n$  which have finite second moments and $\mu_1$ is absolutely continuous. Then the problem \eqref{SURP-QUAD} admits a unique solution which is induced by a transport map $T=(\text{id},T_2,\ldots,T_N)$ as follows
    \begin{align}\label{T-MAPS}
        T_j(x_1)=Du^*_j(x_1+D\phi_1(x_1))-x_1,
    \end{align}
    where $ (\phi_1,\ldots,\phi_N)$ are solutions to the dual of  \eqref{SURP-QUAD}, and $u_j$'s are strictly convex functions.
    \begin{proof}
    Since the surplus cost function is continuous and the measures $\mu_k$ have finite second moments, then dual problem to \eqref{SURP-QUAD} admits a solution
    \begin{align*}
        (\phi_1,\ldots,\phi_N)\in L_1(\mathbb{R}^n,\mu_1)\times\cdots\times L_1(\mathbb{R}^n,\mu_N).
    \end{align*}
Put $X_k=\text{Spt}(\mu_k)$ and consider the following two marginals optimization problems which are obtained from \eqref{SURP-QUAD} by reduction method in which we have chosen $\mathcal{P}_j=\{1,j\}$,
\begin{align}\label{GANGBO-SWIECH-RED}
    \sup_{\tau_{1j}\in\Pi(\mu_1,\mu_j)}\int c_{1j}(x_1,x_j) d\,\tau_{1j}(x_1, x_j),\quad\text{with}\quad c_{1j}=\langle x_1,x_j\rangle+\psi_{j}(x_1+x_j),
\end{align}
where
\begin{align*}
    \psi_j(x_1+x_j)=\sup_{\prod_{k=1,\neq j}^N X_k}\bigg\{\langle x_1+x_j,\sum_{k=2,k\neq j}^{N}x_k\rangle+\sum_{i=2,\neq j}^{N-1}\sum_{k=i+1,\neq j}^{N}\langle x_i,x_k\rangle-\sum_{k=2,\neq j}^{N}\phi_k(x_k)\bigg\},\quad j\in\mathcal{N}\backslash\{1\}.
\end{align*}

First, we shall show that $\phi_1$ is $\mu_1$-almost everywhere differentiable, then we conclude differentiability of the map $x_1\mapsto\Psi_j(x_1+x_j)$. We note that $\phi_j$ is lower semi-continuous function and can be  obtained from the following formula
\begin{align*}
    \phi_j(x_j)=\sup_{\prod_{k=1,\neq j}^N X_k}\bigg\{\sum_{i=1}^{N-1}\sum_{j=i+1}^N \langle x_i,x_j\rangle-\sum_{k=1,\neq j}^N\phi_k(x_k)\bigg\}.
\end{align*}
Hence, it is a convex function. Moreover, since $\phi_j\in L_1(\mathbb{R}^n,\mu_j)$ so it is finite $\mu_j$-almost everywhere, and by convexity of $\phi_j$ it is continuous in the interior of its effective domain. Consequently, due to absolute continuity of $\mu_1$, the function $\phi_1$ is $\mu_1$-almost everywhere differentiable. Put $X_{01}=\text{Dom}(D\phi_1)$ which is a $\mu_1$-full measure subset of $X_1$. Let $\lambda$ be an optimal plan of \eqref{SURP-QUAD} and $\pi_{1j}$ denote the projection map from $\prod_{k=1}^N X_k$ onto $X_1\times X_j$. By Proposition \ref{PROP-3}, the measures $\lambda_{1j}=\pi_{1j}\#\lambda$ are optimal plans for \eqref{GANGBO-SWIECH-RED}, for $j\in\mathcal{N}\backslash\{1\}$. We observe that for a fixed $j$, through Lemma \ref{LEMDIFF}, the derivatives $D_{x_1}c_{1j}(x_1,x_j)$ and $D\psi_j(x_1+x_j)$ exist for $(x_1,x_j)\in \text{Spt}(\lambda_{1j})$. Consider the following set-valued map
        \begin{align*}
            F_{\lambda_{1j}}:X_1\to 2^{X_j},\quad F_{\lambda_{1j}}(x_1)=\{x_j\ :\ (x_1,x_j)\in\text{Spt}(\lambda_{1j})\}.
        \end{align*}
        We shall show that for all $x_1\in\text{Dom}(F_{\lambda_{1j}})\cap X_{01}$, the set $F_{\lambda_{1j}}(x_1)$ is singleton. In fact, if $x_j,x_j^*\in F_{\lambda_{1j}}(x_1)$, then we have
        \begin{align*}
            D_{x_1}c_{1j}(x_1,x_j)= D_{x_1}c_{1j}(x_1,x^*_j),
        \end{align*}
and consequently,
\begin{align*}
    x_j-x^*_j=D\psi_j(x_1+x^*_j)-D\psi_j(x_1+x_j).
\end{align*}
        Multiplying last relation by $x_j-x^*_j$ we have
\begin{align*}
    0\leq \|x_j-x^*_j\|=\langle D\psi_j(x_1+x^*_j)-D\psi_j(x_1+x_j), x_j-x^*_j\rangle\leq 0.
\end{align*}
Hence, $x_j=x^*_j$. This means that there exists a unique measurable map $T_j:X_1\to X_j$ such that 
\begin{align*}
\lambda_{1j}=(\text{id}\times T_j)\#\mu_1,
\end{align*}
is the unique solution to \eqref{GANGBO-SWIECH-RED}, for $j\in\mathcal{N}\backslash\{1\}$. Now, the result follows via Theorem \ref{BIGTH} from which we have
\begin{align*}
    \lambda=(\text{id}\times T_2\times\cdots\times T_N)\#\mu_1.
\end{align*}
To determine the explicit expression for the maps $T_j$, due to the fact that 
\begin{align*}
\langle x_1,x_j\rangle+\psi_j(x_1+x_j)=\phi_1(x_1)+\phi(x_j),\quad\text{on}\ \text{Spt}(\lambda_{1j}),
\end{align*}
and differentiability with respect to the first variable, we have that
\begin{align*}
    x_j+D\psi_j(x_1+x_j)=D\phi_1(x_1).
\end{align*}
Consequently,
\begin{align*}
   x_1+ x_j+D\psi_j(x_1+x_j)=x_1+D\phi_1(x_1).
\end{align*}
On the other hand, since the function $u_j(t)=\frac{1}{2}|t|^2+\psi_j(t)$ is strictly convex, then its conjugate function $u^*_j$ is differentiable. From this, we get
\begin{align*}
    Du_j(x_1+x_j)=x_1+D\phi_1(x_1),
\end{align*}
and therefore,
\begin{align*}
    x_j=Du^*_j(x_1+D\phi_1(x_1))-x_1.
\end{align*}
Therefore, the optimal transport map $T_j$ is determined as \eqref{T-MAPS}.

    \end{proof}

\end{theorem}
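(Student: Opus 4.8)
The plan is to apply the reduction machinery of Section~\ref{SectionDisintegration}, and specifically Theorem~\ref{BIGTH}, with the family of index sets $\mathcal{P}_j=\{1,j\}$ for $j\in\mathcal{N}\setminus\{1\}$. Since the surplus cost is continuous and every $\mu_k$ has finite second moment, the dual problem to \eqref{SURP-QUAD} has a solution $(\phi_1,\dots,\phi_N)$; inserting it into \eqref{GEN-CP-function} and separating off the part of $c$ that involves neither $x_1$ nor $x_j$ exhibits the reduced cost in the split form $c_{1j}(x_1,x_j)=\langle x_1,x_j\rangle+\psi_j(x_1+x_j)$, where $\psi_j$ is a supremum of affine functions of the single variable $x_1+x_j$, hence convex and lower semi-continuous. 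If each reduced two-marginal problem \eqref{GANGBO-SWIECH-RED} is shown to have a unique solution induced by a map $T_j:X_1\to X_j$, then Theorem~\ref{BIGTH} immediately produces the unique optimal plan $\lambda=(\mathrm{id}\times T_2\times\cdots\times T_N)\#\mu_1$ of \eqref{SURP-QUAD}.

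First I would collect the regularity needed. Writing each $\phi_k$ through the dual constraint as a $c$-transform of the remaining potentials presents it as a supremum of affine maps (here one uses that $c$ is affine in each variable separately), hence convex; being $\mu_k$-integrable it is finite and continuous on the interior of its effective domain, so the absolute continuity of $\mu_1$ makes $\phi_1$ differentiable on a $\mu_1$-full set $X_{01}=\mathrm{Dom}(D\phi_1)$. On $\mathrm{Spt}(\lambda_{1j})$ one has the equality $c_{1j}(x_1,x_j)=\phi_1(x_1)+\phi_j(x_j)$ while $c_{1j}\le\phi_1+\phi_j$ holds everywhere; since $\psi_j$ is convex and $x_1\mapsto\langle x_1,x_j\rangle$ is smooth, the squeezing Lemma~\ref{LEMDIFF} applied to $x_1\mapsto\psi_j(x_1+x_j)$ shows that this map, and hence $x_1\mapsto c_{1j}(x_1,x_j)$, is differentiable at every $x_1\in X_{01}$ lying under $\mathrm{Spt}(\lambda_{1j})$.

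Next comes the twist step. Fix $j$ and an optimal plan $\lambda$ of \eqref{SURP-QUAD}; by Proposition~\ref{PROP-3} its restriction $\lambda_{1j}=\pi_{1j}\#\lambda$ is optimal for \eqref{GANGBO-SWIECH-RED}. If $x_j,x_j^{*}\in F_{\lambda_{1j}}(x_1)$ with $x_1\in X_{01}$, then $D_{x_1}c_{1j}(x_1,x_j)=D\phi_1(x_1)=D_{x_1}c_{1j}(x_1,x_j^{*})$, i.e. $x_j-x_j^{*}=D\psi_j(x_1+x_j^{*})-D\psi_j(x_1+x_j)$; pairing with $x_j-x_j^{*}$ and using monotonicity of the gradient of the convex function $\psi_j$ forces $\|x_j-x_j^{*}\|^2\le 0$, so $x_j=x_j^{*}$. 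Thus $\lambda_{1j}=(\mathrm{id}\times T_j)\#\mu_1$ for a unique measurable $T_j$, which is then the unique optimal plan of \eqref{GANGBO-SWIECH-RED}, and Theorem~\ref{BIGTH} closes the uniqueness of the $N$-marginal problem.

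Finally, to extract the closed form \eqref{T-MAPS}, I would differentiate the identity $\langle x_1,x_j\rangle+\psi_j(x_1+x_j)=\phi_1(x_1)+\phi_j(x_j)$ in $x_1$ on $\mathrm{Spt}(\lambda_{1j})$ to get $x_j+D\psi_j(x_1+x_j)=D\phi_1(x_1)$, add $x_1$ to both sides, and introduce $u_j(t)=\tfrac12|t|^2+\psi_j(t)$, which is strictly convex so that $u_j^{*}$ is differentiable; then $Du_j(x_1+x_j)=x_1+D\phi_1(x_1)$, and inverting through the Legendre transform gives $x_1+x_j=Du_j^{*}(x_1+D\phi_1(x_1))$, which is precisely \eqref{T-MAPS}. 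The main obstacle I expect is not the algebra but the two regularity claims feeding it: that $\psi_j$ remains genuinely convex once the $N-2$ variables have been eliminated (so the monotonicity step in the twist argument is valid), and that Lemma~\ref{LEMDIFF} really applies to $\psi_j$ precomposed with $x_1\mapsto x_1+x_j$, so as to pin down differentiability of $c_{1j}$ in $x_1$ exactly along $\mathrm{Spt}(\lambda_{1j})$.
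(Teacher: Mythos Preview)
Your proposal is correct and follows essentially the same route as the paper's own proof: reduce to the two-marginal problems with $\mathcal{P}_j=\{1,j\}$, write $c_{1j}=\langle x_1,x_j\rangle+\psi_j(x_1+x_j)$ with $\psi_j$ convex, use convexity of $\phi_1$ and absolute continuity of $\mu_1$ together with Lemma~\ref{LEMDIFF} to obtain differentiability on $\mathrm{Spt}(\lambda_{1j})$, run the monotonicity twist argument to make each $F_{\lambda_{1j}}(x_1)$ a singleton, invoke Theorem~\ref{BIGTH}, and then invert via $u_j(t)=\tfrac12|t|^2+\psi_j(t)$ to get \eqref{T-MAPS}. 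The two points you flag as obstacles (convexity of $\psi_j$ and the applicability of Lemma~\ref{LEMDIFF} to $x_1\mapsto\psi_j(x_1+x_j)$) are exactly the ones the paper also relies on, and they go through for the reasons you state.
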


\section{Appendix}\label{SectionAppendix}
This section is devoted to the prerequisite of the disintegration theorems and some side results which are used in this paper. We begin with the disintegration notions. At this aim, we use \cite{AMBROSIO,DELLACHERIEMEYER} as the main references. As a direct summary one can also refer to \cite{AMBROSIOCAFFARELLIBRENIERBUTTAZZOVILLANI}. It should be noted that we may state definitions and results in special cases. More general case can be found in the given references.

For a space $X$, the set of Borel probability measures on $X$ is denoted by $P(X)$. Recall the projection map $\pi_{XY}:X\times Y\times Z\to X\times Y$, for a measure $\lambda\in\Pi(\mu,\nu,\gamma)$, we denote by $\lambda^{XY}$ the measure $\pi_{XY}\#\lambda$, i.e. the restriction of $\lambda$ on $X\times Y$. We do the same in the case of $X\times Z$, as well.

The following results are side works in our paper and they will be partially applied  to parts of our proofs. For a fixed measure $\lambda\in\Pi(\mu,\nu,\gamma)$, by considering its marginal $\lambda^{XY}$ on $X\times Y$, by $\Pi(\lambda^{XY},\gamma)$, we mean the convex subset of $\Pi(\mu,\nu,\gamma)$ of measures with double restriction on $X\times Y$ equal to $\lambda^{XY}$. Moreover, $\Pi(\lambda^{XY},\gamma)\cap \Pi(\lambda^{XZ},\nu)$ is the convex subset of $\Pi(\mu,\nu,\gamma)$ containing measures with restrictions on $X\times Y$ and $X\times Z$ equal to $\lambda^{XY}$ and $\lambda^{XZ}$, respectively. Recall the description given in the beginning of the Section \ref{SectionDisintegration}, for a measure $\lambda\in\Pi(\mu,\nu,\gamma)$ we have
\begin{align*}
    \lambda=\lambda^x\otimes\mu,\quad \lambda^x\in\Pi(\nu^x,\gamma^x)\subseteq P(Y\times Z),
\end{align*}
where
\begin{align*}
    \lambda^{XY}=\nu^x\otimes\mu,\quad\nu^x\in P(Y),\quad\text{and}\quad \lambda^{XZ}=\gamma^x\otimes\mu,\quad\gamma^x\in P(Z).
\end{align*}

\begin{lemma}\label{LEM2}
An element $\lambda=\lambda^x\otimes\mu\in \Pi(\mu,\nu,\gamma)$ is an extreme point of $\Pi(\lambda^{XY},\gamma)\cap \Pi(\lambda^{XZ},\nu)$ whenever $\lambda^x$ is an extreme point of $\Pi(\nu^x,\gamma^x)$, for $\mu$-almost every $x\in X$.
\begin{proof}
If there exist $\lambda_i=\lambda_i^x\otimes\mu\in\Pi(\lambda^{XY},\gamma)\cap \Pi(\lambda^{XZ},\nu)$, such that 
\begin{align*}
    \lambda=\frac{1}{2}(\lambda_1+\lambda_2),\quad \lambda\neq\lambda_1\neq\lambda_2,
\end{align*}
then by the uniqueness of disintegration we have
\begin{align}\label{EQU-ACC}
    \lambda^x=\frac{1}{2}(\lambda^x_1+\lambda^x_2),\quad  \lambda^x_i\in \Pi(\nu^x,\gamma^x),\quad \mu\text{-a.e.}\; x\in X.
\end{align}
Now, since by assumption for $\mu$-almost every $x\in X$ the measure $\lambda^x$ is an extreme point of $\Pi(\nu^x,\gamma^x)$ then we have
\begin{align*}
    \lambda^x=\lambda^x_1=\lambda^x_2,\quad \mu\text{-a.e.}\; x\in X,
\end{align*}
which implies that $\lambda=\lambda_1=\lambda_2$.
\end{proof}
\end{lemma}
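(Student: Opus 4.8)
The plan is to argue by contradiction, using the uniqueness clause of the Disintegration Theorem (Theorem \ref{DISINTTH}) applied throughout with respect to the common first marginal $\mu$. Suppose $\lambda=\lambda^x\otimes\mu$ is \emph{not} an extreme point of $\Pi(\lambda^{XY},\gamma)\cap\Pi(\lambda^{XZ},\nu)$, so that there exist $\lambda_1,\lambda_2$ in this set with $\lambda=\tfrac12(\lambda_1+\lambda_2)$ and $\lambda\neq\lambda_1\neq\lambda_2$. Since every element of $\Pi(\lambda^{XY},\gamma)\cap\Pi(\lambda^{XZ},\nu)\subseteq\Pi(\mu,\nu,\gamma)$ has $X$-marginal equal to $\mu$, we may disintegrate $\lambda_i=\lambda_i^x\otimes\mu$ with $\lambda_i^x\in P(Y\times Z)$ uniquely determined for $\mu$-a.e. $x$ (here the target space $Y\times Z$ plays the role of the second factor in Theorem \ref{DISINTTH}).

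First I would observe that $\tfrac12(\lambda_1^x+\lambda_2^x)\otimes\mu=\tfrac12(\lambda_1+\lambda_2)=\lambda=\lambda^x\otimes\mu$, so by uniqueness of disintegration $\lambda^x=\tfrac12(\lambda_1^x+\lambda_2^x)$ for $\mu$-a.e. $x$. Next I would check that $\lambda_i^x\in\Pi(\nu^x,\gamma^x)$ for $\mu$-a.e. $x$: the restriction of $\lambda_i$ to $X\times Y$ equals $\lambda^{XY}=\nu^x\otimes\mu$, and pushing the disintegration $\lambda_i=\lambda_i^x\otimes\mu$ forward through the fibrewise projection $Y\times Z\to Y$ yields a disintegration of $\lambda^{XY}$ over $\mu$; uniqueness then forces the $Y$-marginal of $\lambda_i^x$ to be $\nu^x$ for $\mu$-a.e. $x$. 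The identical argument with $X\times Z$ shows the $Z$-marginal of $\lambda_i^x$ is $\gamma^x$ for $\mu$-a.e. $x$. Intersecting the finitely many $\mu$-full sets produced so far, we obtain a $\mu$-full set on which simultaneously $\lambda^x=\tfrac12(\lambda_1^x+\lambda_2^x)$ and $\lambda_1^x,\lambda_2^x\in\Pi(\nu^x,\gamma^x)$.

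Finally, the hypothesis that $\lambda^x$ is an extreme point of $\Pi(\nu^x,\gamma^x)$ for $\mu$-a.e. $x$ gives $\lambda^x=\lambda_1^x=\lambda_2^x$ for $\mu$-a.e. $x$, whence $\lambda=\lambda_1^x\otimes\mu=\lambda_1$ and likewise $\lambda=\lambda_2$, contradicting $\lambda\neq\lambda_1\neq\lambda_2$. The only delicate point in this plan is the measure-theoretic bookkeeping: ensuring that each appeal to uniqueness of disintegration is legitimate, that a fibrewise projection of a disintegration is again a disintegration of the appropriate marginal, and that all the exceptional $\mu$-null sets can be discarded at once. Once these are handled, no further obstacle arises; the argument is essentially a transfer of extremality from the full space to the fibres and back.
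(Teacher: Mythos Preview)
Your proposal is correct and follows essentially the same argument as the paper: disintegrate $\lambda_1,\lambda_2$ over $\mu$, use uniqueness of disintegration to obtain $\lambda^x=\tfrac12(\lambda_1^x+\lambda_2^x)$ with $\lambda_i^x\in\Pi(\nu^x,\gamma^x)$ for $\mu$-a.e.\ $x$, then invoke fibrewise extremality to force $\lambda_1^x=\lambda_2^x=\lambda^x$ and hence $\lambda_1=\lambda_2=\lambda$. If anything, you are slightly more careful than the paper in spelling out why $\lambda_i^x\in\Pi(\nu^x,\gamma^x)$ (via the fibrewise projection and uniqueness of disintegration), a step the paper simply asserts.
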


\begin{proposition}\label{PROP-A}
Let $\lambda\in \Pi(\mu,\nu,\gamma)$.
\begin{enumerate}
\item\label{P1}
If $\lambda$ is an extreme point of $\Pi(\mu, \nu,\gamma)$ then it is an extreme point of $\Pi(\lambda^{XY},\gamma)$, $\Pi(\lambda^{YZ},\mu)$, and $\Pi(\lambda^{XZ},\nu)$
\item\label{P2}
If $\lambda$ is an extreme point of $\Pi(\lambda^{XY},\gamma)$ and $\lambda^{XY}$ is an extreme point of $\Pi(\mu,\nu)$, then it is an extreme point of $\Pi(\mu, \nu,\gamma)$.
    \item\label{P3}
    Assume that $\lambda$ admits a disintegration of the form 
    \begin{align}\label{DISINT}
    \lambda=(\gamma^x\times \nu^x)\otimes\mu,\quad \text{where}\quad\lambda^{XY}=\nu^x\otimes \mu \ \text{and}\ \lambda^{XZ}=\gamma^x\otimes \mu.
\end{align}
\begin{enumerate}
    \item\label{PA}
            If $\lambda$ is an extreme point of $\Pi(\lambda^{XY},\gamma)$ then $\lambda^{XZ}$ is an extreme point of $\Pi(\mu,\gamma)$
            \item\label{PB}
             If $\lambda$ is an extreme point of $\Pi(\lambda^{XY},\gamma)$ and $\Pi(\lambda^{XZ},\mu)$ then $\lambda$ is an extreme point of $\Pi(\mu,\nu,\gamma)$.
    
\end{enumerate}
\end{enumerate}

\begin{proof}
\begin{enumerate}
    \item
    This is obvious, since each of $\Pi(\lambda^{XY},\gamma)$, $\Pi(\lambda^{YZ},\mu)$, and $\Pi(\lambda^{XZ},\nu)$ is a subset of $\Pi(\mu,\nu,\gamma)$.
\item 
If $\lambda=\frac{1}{2}\lambda_1+\frac{1}{2}\lambda_2$ for $\lambda_i\in\Pi(\mu,\nu,\gamma)$, then we have
\begin{align*}
   \lambda^{XY}=\frac{1}{2}\lambda^{XY}_1+\frac{1}{2}\lambda^{XY}_2.
\end{align*}
But, the measure $\lambda^{XY}$ is an extreme point of $\Pi(\mu,\nu)$. Therefore, $   \lambda=\lambda^{XY}_1=\lambda^{XY}_2$. This implies that
\begin{align*}
    \lambda_i\in\Pi(\lambda^{XY},\gamma).
\end{align*}
Now, due to extremality of $\lambda$ in $\Pi(\lambda^{XY},\gamma)$ we obtain a contradiction.
\item We just need to show Part \eqref{PA} then Part \eqref{PB} will be deduced from Part \eqref{P2}.
\begin{enumerate}
    \item 
If  $\lambda^{XZ}=\gamma^x\otimes\mu$ is not an extreme point of $\Pi(\mu,\gamma)$ then 
\begin{align*}
    &\lambda^{YZ}=\frac{1}{2}\theta_1+\frac{1}{2}\theta_2,\quad \text{for}\ \theta_i=\gamma_i^x\otimes\mu\in\Pi(\mu,\gamma).
\end{align*}
and consequently,
\begin{align*}
    \gamma^y\otimes\nu&=(\frac{1}{2}\gamma^x_1+\frac{1}{2}\gamma^x_2)\otimes\mu,\quad \mu\text{-a.e.}\; x.
\end{align*}
By uniqueness of disintegration, we have
\begin{align*}
    \gamma^x=\frac{1}{2}\gamma^x_1+\frac{1}{2}\gamma^x_2,\quad \mu\text{-a.e.}\; x.
\end{align*}
Define
\begin{align*}
    \Tilde{\lambda}_i=(\nu^x\times\gamma^x_i)\otimes \mu,\quad i=1,2.
\end{align*}
Then $\Tilde{\lambda}_i\in \Pi(\lambda^{XY},\gamma)$ for which we have
\begin{align*}
\Tilde{\lambda}^{XZ}_i=\theta_i,\quad\text{and}\quad    \lambda=\frac{1}{2}\Tilde{\lambda}_1+\frac{1}{2}\Tilde{\lambda}_2.
\end{align*}
By extremality of $\lambda$ in $\Pi(\lambda^{XY},\gamma)$, we then deduce that $\lambda=\Tilde{\lambda}_i$, and
\begin{align}
    \lambda^{XZ}=\theta_i.
\end{align}
\item 
By assumption, $\lambda$ is an extreme point of $\Pi(\lambda^{XY},\gamma)$ so by Part \eqref{PA} its marginal on $X\times Z$ is an extreme point of $\Pi(\mu,\gamma)$. On the other hand, $\lambda$ is an extreme point of $\Pi(\lambda^{XZ},\nu)$, therefore, by Part \eqref{P2} we have the result.
\end{enumerate}

\end{enumerate}
\end{proof}

\end{proposition}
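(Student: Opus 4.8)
The plan is to prove the four assertions in the order listed, each reducing to an elementary manipulation of convex combinations together with the uniqueness clause of the Disintegration Theorem (Theorem \ref{DISINTTH}). For part \eqref{P1}, the point is simply that each of $\Pi(\lambda^{XY},\gamma)$, $\Pi(\lambda^{YZ},\mu)$ and $\Pi(\lambda^{XZ},\nu)$ is a convex subset of $\Pi(\mu,\nu,\gamma)$ containing $\lambda$ --- fixing the restriction of a measure to a pair of coordinates automatically fixes the two corresponding one-dimensional marginals --- and an extreme point of a convex set is a fortiori extreme in every convex subset to which it belongs. For part \eqref{P2}, if $\lambda=\frac12(\lambda_1+\lambda_2)$ with $\lambda_1,\lambda_2\in\Pi(\mu,\nu,\gamma)$, then pushing forward by $\pi_{XY}$ gives $\lambda^{XY}=\frac12(\lambda_1^{XY}+\lambda_2^{XY})$ with $\lambda_i^{XY}\in\Pi(\mu,\nu)$; extremality of $\lambda^{XY}$ in $\Pi(\mu,\nu)$ forces $\lambda_1^{XY}=\lambda_2^{XY}=\lambda^{XY}$, so $\lambda_1,\lambda_2\in\Pi(\lambda^{XY},\gamma)$, and extremality of $\lambda$ there gives $\lambda_1=\lambda_2=\lambda$.

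The substantive step is part \eqref{PA}, which I would prove by a gluing construction. Assume $\lambda^{XZ}=\frac12(\theta_1+\theta_2)$ with $\theta_1,\theta_2\in\Pi(\mu,\gamma)$, and disintegrate $\theta_i=\gamma_i^x\otimes\mu$. Restricting this identity and using uniqueness of disintegration with respect to $\mu$ yields $\gamma^x=\frac12(\gamma_1^x+\gamma_2^x)$ for $\mu$-a.e.\ $x$. The idea is to reassemble these convex pieces against the \emph{fixed} fibers $\nu^x$ of $\lambda^{XY}$ by setting $\widetilde\lambda_i=(\gamma_i^x\times\nu^x)\otimes\mu$. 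One then checks that $\widetilde\lambda_i\in\Pi(\lambda^{XY},\gamma)$: its restriction to $X\times Y$ is $\nu^x\otimes\mu=\lambda^{XY}$, and its marginal on $Z$ is $\int_X\gamma_i^x\,d\mu$, which is the $Z$-marginal of $\theta_i$, namely $\gamma$. Moreover $\widetilde\lambda_i^{XZ}=\gamma_i^x\otimes\mu=\theta_i$ and $\lambda=\frac12(\widetilde\lambda_1+\widetilde\lambda_2)$, the latter identity being exactly where the standing hypothesis that $\lambda$ has the product disintegration \eqref{DISINT} enters. Extremality of $\lambda$ in $\Pi(\lambda^{XY},\gamma)$ then forces $\widetilde\lambda_1=\widetilde\lambda_2=\lambda$, hence $\theta_1=\theta_2=\lambda^{XZ}$.

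Finally, part \eqref{PB} follows by bootstrapping: part \eqref{PA} gives that $\lambda^{XZ}$ is an extreme point of $\Pi(\mu,\gamma)$, and $\lambda$ is by hypothesis an extreme point of $\Pi(\lambda^{XZ},\nu)$, so part \eqref{P2} applied with the roles of $Y$ and $Z$ interchanged shows $\lambda$ is extreme in $\Pi(\mu,\nu,\gamma)$. The only place where care is genuinely needed is the verification in part \eqref{PA} that the glued competitors $\widetilde\lambda_i$ actually lie in $\Pi(\lambda^{XY},\gamma)$ and that the convex decomposition of $\lambda$ is inherited by them; the rest is routine bookkeeping with marginals and with uniqueness of disintegrations.
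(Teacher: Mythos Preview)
Your proposal is correct and follows essentially the same route as the paper: parts \eqref{P1} and \eqref{P2} are handled by the obvious subset and push-forward arguments, part \eqref{PA} by the same gluing construction $\widetilde\lambda_i=(\gamma_i^x\times\nu^x)\otimes\mu$ together with uniqueness of disintegration, and part \eqref{PB} by combining \eqref{PA} with \eqref{P2}. If anything, your write-up is cleaner---the paper's version contains a couple of evident typos (e.g.\ $\lambda^{YZ}$ for $\lambda^{XZ}$ in the decomposition of part \eqref{PA}, and $\Pi(\lambda^{XZ},\mu)$ for $\Pi(\lambda^{XZ},\nu)$ in the statement of \eqref{PB}) that you have silently corrected.
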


The last result which is an applicable lemma has been used frequently in the proofs of Section \ref{SectionApplication}.
\begin{lemma}\label{LEMDIFF}
Assume that $f:\mathbb{R}^n\to\mathbb{R}\cup\{+\infty\}$ is a convex function, and  $x_0\in\mathbb{R}^n$. Let $u:\mathbb{R}^n\to\mathbb{R}$ be a function such that $f(x_0)=u(x_0)$ and on a neighborhood $N_r(x_0)$ of $x_0$ we have
\begin{align*}
    f(x)\leq u(x),\quad x\in N_r(x_0).
\end{align*}
If $u$ is differentiable at $x_0$ then the function $f$ is differentiable at $x_0$ as well.
\end{lemma}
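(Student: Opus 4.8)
The plan is to reduce everything to the behaviour of the one-sided directional derivatives of the convex function $f$ at $x_0$. First I would observe that $f$ is finite on the whole neighbourhood $N_r(x_0)$: indeed $f(x_0)=u(x_0)\in\mathbb{R}$ and $f\le u<+\infty$ on $N_r(x_0)$, so $f$ is a proper convex function which is finite (hence locally Lipschitz and continuous) on an open set containing $x_0$. In particular, for every direction $v\in\mathbb{R}^n$ the difference quotient $t\mapsto\frac{f(x_0+tv)-f(x_0)}{t}$ is monotone in $t>0$ by convexity, so the one-sided directional derivative $f'(x_0;v):=\lim_{t\to 0^+}\frac{f(x_0+tv)-f(x_0)}{t}$ exists and is finite.

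The key step is the comparison with $u$. For small $t>0$ we have $f(x_0+tv)\le u(x_0+tv)$ while $f(x_0)=u(x_0)$, hence $\frac{f(x_0+tv)-f(x_0)}{t}\le\frac{u(x_0+tv)-u(x_0)}{t}$; letting $t\to 0^+$ and using differentiability of $u$ at $x_0$ gives $f'(x_0;v)\le\langle\nabla u(x_0),v\rangle$ for every $v$. Applying this inequality with $-v$ in place of $v$ yields $f'(x_0;-v)\le-\langle\nabla u(x_0),v\rangle$. On the other hand, convexity of $f$ forces $f'(x_0;v)+f'(x_0;-v)\ge 0$ (the right and left slopes at $x_0$ along a line cannot cross). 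Combining the last two relations gives $\langle\nabla u(x_0),v\rangle\le f'(x_0;v)$, and therefore $f'(x_0;v)=\langle\nabla u(x_0),v\rangle$ for all $v\in\mathbb{R}^n$.

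Finally I would invoke the standard fact that a convex function which is finite in a neighbourhood of $x_0$ and whose directional derivative $v\mapsto f'(x_0;v)$ is linear is differentiable at $x_0$ (equivalently, its subdifferential $\partial f(x_0)$ is a singleton). Here $f'(x_0;\cdot)$ coincides with the linear functional $v\mapsto\langle\nabla u(x_0),v\rangle$, so $f$ is differentiable at $x_0$ with $\nabla f(x_0)=\nabla u(x_0)$. I do not expect a real obstacle; the only point deserving a line of care is confirming that $f$ is genuinely finite throughout a neighbourhood of $x_0$ so that the convex-analytic tools apply, which is immediate from $f\le u$. As an alternative to the directional-derivative argument, one may reason through subgradients: any $p\in\partial f(x_0)$ satisfies $u(x_0)+\langle p,x-x_0\rangle\le f(x)\le u(x)$ on $N_r(x_0)$, so $x\mapsto u(x)-\langle p,x\rangle$ attains a local minimum at $x_0$ and hence $p=\nabla u(x_0)$; thus $\partial f(x_0)=\{\nabla u(x_0)\}$, which again gives differentiability of $f$ at $x_0$.
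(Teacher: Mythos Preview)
Your proof is correct and essentially matches the paper's argument. The paper works directly with a subgradient $p\in\partial f(x_0)$, uses $u(x_0+h)-u(x_0)\ge f(x_0+h)-f(x_0)\ge\langle p,h\rangle$, and takes $h=t(p-Du(x_0))$ to force $p=Du(x_0)$; this is exactly your alternative argument at the end, phrased slightly differently. Your main route via one-sided directional derivatives is a cosmetic variant of the same idea---the inequality $f'(x_0;v)+f'(x_0;-v)\ge0$ plays the role of the subgradient inequality---and yields the same conclusion $\nabla f(x_0)=\nabla u(x_0)$.
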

\begin{proof}
Since $u$ is differentiable at $x_0$ then it is finite, so via $f(x_0)=u(x_0)$ the function $f$ is finite at $x_0$ as well. Moreover, $f$ is bounded above in the neighborhood $N_r(x_0)$. So, from \cite{EKELANDTEMAM}, it can be concluded that $f$ is a proper and locally Lipschitz function with $\partial f(x_0)\neq\emptyset$. Now, let $p\in\partial f(x_0)$. We shall show that $p=Du(x_0)$. To this end, we first note that for all $h\in\mathbb{R}^n$, we have
    \begin{align*}
        u(x_0+h)-u(x_0)\geq f(x_0+h)-f(x_0)\geq \langle p, h\rangle.
    \end{align*}
    Subtracting $\langle Du(x_0),h\rangle$ from all sides and dividing by $|h|$, it is obtained that
    \begin{align*}
        \frac{u(x_0+h)-u(x_0)-\langle Du(x_0),h\rangle}{|h|}\geq \frac{\langle p-Du(x_0),h\rangle}{|h|}.
    \end{align*}
    Now, if we put $h=t(p-Du(x_0))$ then as $t\to 0^+$, we have
    \begin{align*}
        0\geq |p-Du(x_0)|.
    \end{align*}
    Hence, $p=Du(x_0)$. This yields that $\partial f(x_0)$ is singleton, and therefore, $f$ is differentiable at $x_0$.
\end{proof}

\section*{Declaration of interest}

The authors declare that they have no known competing financial interests or personal relationships that could have
appeared to influence the work reported in this paper.


\section*{Data availability statement}
Data sharing not applicable to this article as no datasets were generated or analysed during the current study.

\end{document}